\newtheorem{thm}{Theorem}[section]
\newtheorem{lem}[thm]{Lemma}
\newtheorem{prop}[thm]{Proposition}
\newtheorem{cor}[thm]{Corollary}
\newtheorem{NN}[thm]{}
\theoremstyle{definition}\newtheorem{df}[thm]{Definition}
\theoremstyle{definition}\newtheorem{rem}[thm]{Remark}
\theoremstyle{definition}
\renewcommand{\phi}{\varphi}
\newcommand{\N}{\mathbb{N}}
\newcommand{\Z}{\mathbb{Z}}
\newcommand{\Q}{\mathbb{Q}}
\newcommand{\C}{\mathbb{C}}
\newcommand{\T}{\mathbb{T}}
\DeclareMathOperator{\Aut}{Aut}
\newcommand{\Aff}{\operatorname{Aff}}
\newcommand{\id}{\operatorname{id}}
\newcommand{\morp}{contractive completely positive linear map}
\newcommand{\hm}{homomorphism}
\newcommand{\dt}{\delta}
\newcommand{\ep}{\epsilon}
\newcommand{\F}{{\cal F}}
\newcommand{\andeqn}{\,\,\,{\rm and}\,\,\,}
\newcommand{\rforal}{\,\,\,{\rm for\,\,\,all}\,\,\,}
\newcommand{\CA}{$C^*$-algebra}
\newcommand{\CAs}{$C^*$-algebras}
\newcommand{\SCA}{$C^*$-subalgebra}
\newcommand{\af}{{\alpha}}
\newcommand{\bt}{{\beta}}
\newcommand{\beq}{\begin{eqnarray}}
\newcommand{\eneq}{\end{eqnarray}}
\newcommand{\tforal}{\,\,\,\text{for\,\,\,all}\,\,\,}
\newcommand{\tand}{\,\,\,\text{and}\,\,\,}
\title{Kishimoto's Conjugacy Theorems in simple $C^*$-algebras
of tracial rank one}
\author{Huaxin Lin }
\date{}
\begin{document}

\maketitle

\begin{abstract}
Let $A$ be a unital separable simple amenable $C^*$-algebra with finite tracial rank which satisfies
the Universal Coefficient Theorem (UCT). Suppose $\alpha$ and $\beta$ are two automorphisms with the Rokhlin property
that { induce the same action on the $K$-theoretical data of $A$.}
We show that $\alpha$ and $\beta$ are strongly outer conjugate and uniformly approximately conjugate, that is,
there exists a sequence of unitaries $\{u_n\}\subset A$ and a sequence
of strongly asymptotically inner automorphisms $\sigma_n$ such that
$$
\alpha={\rm Ad}\, u_n\circ \sigma_n\circ \beta\circ \sigma_n^{-1}\,\,\, {\rm and}\,\,\, \lim_{n\to\infty}\|u_n-1\|=0,
$$
and that the converse holds. { We then give a $K$-theoretic description as to exactly when $\alpha$ and $\beta$ are outer conjugate, at least under a mild restriction.  Moreover,  we show that given any $K$-theoretical data, there exists an automorphism
$\alpha$ with the Rokhlin property which has the same $K$-theoretical data. }

\end{abstract}

\section{Introduction}
Let $A$ be a unital separable simple \CA\, and let $\Aut(A)$ be the automorphism group of $A.$ Kishimoto  (\cite{K1} and \cite{K2}) studied those
automorphisms with the Rokhlin property (see definition \ref{dRK} below).
Suppose $\af,\bt\in\Aut(A)$ have the
Rokhlin property and are {asymptotically unitarily equivalent}.
Kishimoto showed, under the assumption that $A$ is a unital simple $A\T$-algebra of real rank zero, $\af$ and $\bt$ are outer conjugate, i.e., there exists a unitary $u\in A$ and $\sigma\in \Aut(A)$
such that
\beq\label{int-1}
\af={\rm Ad}\, u\circ \sigma\circ \bt \circ \sigma^{-1}.
\eneq
Kishimoto also showed that automorphisms with the Rokhlin property
are abundant. Kishimoto's work actually revealed much more.  Matui (\cite{M}) found out, using a homotopy lemma, that the above-mentioned outer conjugacy result of Kishimoto holds for all unital separable amenable \CAs\, with tracial rank zero which satisfy the Universal
Coefficient Theorem (UCT). Furthermore, combining Kishimoto's argument with the condition of ${\cal Z}$-stability, {Sato recently proved that}, given any $\af\in \Aut(A),$ where
$A$ is a unital separable simple \CA\, with tracial rank zero,
there is $\bt\in \Aut(A)$ with the Rokhlin property such that
$\af$ and $\bt$ are asymptotically unitarily equivalent. This further shows that automorphisms with the Rokhlin property are abundant. We { would } also like to mention
that Kishimoto's result  holds for the case that $A$ is a purely infinite simple \CA\,
(see \cite{Nak}).

{ We are lead to consider the following questions that this paper attempts to answer:} 1) Could Kishimoto's result work for more general stably finite
simple \CA s? 2) Is it possible to remove the unitary $u$ in (\ref{int-1})? Or, is it possible
to obtain an approximate conjugacy result? 3) Is there a $KK$-theoretic description
of outer conjugacy (or approximate conjugacy)?

{To answer the first question}, we will consider the case that $A$ is a unital separable  amenable  simple \CA\, with  finite tracial rank.
We further assume that $A$ satisfies the UCT.  It has been shown in \cite{LnlocAH} that $A$ actually has tracial rank at most one. These \CA s include
all unital simple $A\T$-algebras, as well as algebras that may not have real rank zero.  By a classification result, these \CA s are precisely those unital simple $AH$-algebras with slow dimension growth.
Kishimoto's argument works very well in the case that $A$ has real rank zero.
%It probably also works
%for \CA s which are rationally tracial rank zero.
However, there is a significant technical problem in
%{\color{blue}encountered when
generalizing Kishimoto's method to  \CA s with infinite exponential length, which is the case for general unital simple AH-algebra with slow dimension growth.  Kishimoto's method
relies on something called the Basic Homotopy Lemma (\cite{BEEK}) which has a controlled length
for the homotopy. The control of the length is essential in the argument and {plays a very important  role}. The Basic Homotopy Lemma has been extended to a much more general
situation (see \cite{Lnmem} and \cite{Lnhomt1}) which includes the case when the \CA s are allowed to
have tracial rank one instead of zero. However, for that extension, one loses control of the length
of the homotopy. This cannot be improved since the exponential length is infinite in unital simple
AH-algebras whenever it has real rank other than zero.
%It looks hopeless.
Nevertheless it has been
demonstrated that it is possible to control the exponential length of some special unitaries, namely
those unitaries which are not only in the path-connected component of the identity but are also in the closure of
the commutator subgroup.  In this paper, we first present a very special homotopy lemma
for some special unitaries which are in the closure of the commutator subgroup. By proving an existence
type of result, we will show that the special homotopy lemma mentioned above would be sufficient
if one is willing to pay the toll for a narrow passage into Kishimoto's method. We will prove that
Kishimoto's conjugacy theorem holds for unital separable amenable simple \CA s with finite tracial rank satisfying the UCT.

For the second question, upon examining Kishimoto's  proof,  one realizes that, when $\af$ and $\bt$ are asymptotically unitarily equivalent and satisfy the Rokhlin property, $\af$ and $\bt$ are outer conjugate in a stronger sense: one can choose
$\sigma$ to be strongly asymptotically inner.  Furthermore, by examining his proof further,
one notices that Kishimoto's  argument
provided a way to remove $u,$ or at least make $u$ as close to the identity as possible, so that if certain obstacles disappeared,
a conclusion of being approximately conjugate is possible.
 We identify the obstacle as the quotient group $K_1(A)/H_1(K_0(A), K_1(A))$ (see \ref{dH1} below).
When $H_1(K_0(A), K_1(A))=K_1(A),$  that  $\af$ and $\bt$ are (strongly) asymptotically unitarily equivalent
and satisfy the Rokhlin property implies that
$\af$ and $\bt$ are not only (strongly) outer conjugate but are also uniformly approximately conjugate.  This is our answer to the second question (see \ref{CM2}).

Since Kishimoto was primarily concerned with approximately inner automorphisms at the time,
he did not provide a $K$-theoretic description of outer conjugacy.  It should be noted that
his outer conjugacy is not equivalent to asymptotic unitary equivalence even in the class of automorphisms with the Rokhlin property. However, strong outer conjugacy
is equivalent to asymptotic unitary equivalence which, by recent developments in Elliott's classification program, can be characterized by $K$-theoretical data (\ref{CM1}).  In the case that $K_1(A)=H_1(K_0(A), K_1(A)),$
this also gives a $K$-theoretical description of strong outer conjugacy and uniformly approximate conjugacy
(see \ref{CC1}). To have a better $K$-theoretical description
of outer conjugacy for automorphisms with the Rokhlin property,  we impose some mild restriction
(for example, we assume that $K_1(A)/{\rm Tor}(K_1(A))$ is  free). {We then introduce a $K$-theory related
group on invariant sets. We show, under the restriction, that $\af$ and $\bt$ are outer conjugate if and only if
their $K$-theoretical invariant ${\tilde{\mathfrak{K}}}(\af)$ and ${\tilde{\mathfrak{K}}}(\bt)$ are conjugate (see
\ref{LT}).}

Briefly, this paper is organized as follows. The next section is a list of definitions and notations that will be used
in this paper. In Section 3 we present a version of the  Basic Homotopy Lemma which controls the length of the path.
Section 4 uses results of Section 3 to present a proof of Kishimoto's conjugacy theorem
for unital separable amenable simple \CA s with tracial rank at most one which satisfy the UCT.
Section 5 serves as a preparation for the proof of Section 6. In Section 6, combining
Sato's refinement of Kishimoto's argument, we show that, given any automorphism $\af$ on a unital separable
amenable simple \CA\, $A$ with finite tracial rank satisfying the UCT, there exists an automorphism ${\tilde \af}$ on $A$
which has the Rokhlin property and which is strongly asymptotically unitarily equivalent to $\af.$  In Section 7,
we give the $K$-theoretic description of (strongly) outer conjugacy and uniformly approximate
conjugacy for automorphisms with the Rokhlin property on $A$ mentioned above.

{\bf Acknowledgements} This work is partially supported by
The Research Center for Operator Algebras in East China Normal University and
a grant of the National Science Foundation (NSF).

\section{Preliminaries}

\begin{df}
{\rm
Let $A$ be a unital \CA. Denote by $\Aut(A)$ the group of automorphisms
of $A.$
Let $u\in A$ be a unitary.
Define ${\rm Ad}\, u(a)=u^*au$ for all $a\in A.$ ${\rm Ad}\, u$ is  an automorphisms. These automorphisms are called inner
automorphisms.
}
\end{df}
\begin{df}\label{dRK}
{\rm
Let $\af\in \Aut(A).$ Following  Kishimoto (Definition 4.1 of \cite{K1}), we say that $\af$ has the Rokhlin property if for any integer $k\in \N,$ any $\ep>0$ and any finite subset ${\cal F}\subset A,$ there exists a family of projections
$\{e_{1,0}, e_{1,1},...,e_{1,k-1},e_{2,0}, e_{2,1},...,e_{2,k}\}$ in $A$ such that
\beq\label{drk1}
\sum_{j=0}^{k-1}e_{1,j}+\sum_{j=0}^ke_{2,j}&=&1,\\
\|\af(e_{i,j})-e_{i,j+1}\|&<&\ep\,\,\,j=0,1,...,k+i-3, \,i=1,2,\\
\|[e_{i,j}, x]\|&<&\ep\tforal x\in {\cal F}, \,\,\,j=0,1,...,k+i-2,\, i=1,2.
\eneq
As pointed out by Kishimoto, one has
\beq\label{drk-2}
\|\af(e_{1,k-1}+e_{2,k})-(e_{1,0}+e_{2,0})\|<(2k-1)\ep.
\eneq

The set of those elements in $\Aut(A)$ with the Rokhlin property
will be denoted by $\Aut_R(A).$
}

\end{df}

\begin{df}\label{dasym}
{\rm  We say $\af$ and $\bt$ are {\it asymptotically unitarily equivalent}, if there exists a continuous
path of unitaries $\{u(t): t\in [0,\infty)\}\subset A$ such that
\beq\label{dasym-1}
\af(x)=\lim_{t\to\infty} u(t)^*\bt(x)u(t)\tforal x\in A.
\eneq
We say that $\af$ and $\bt$ are {\it strongly asymptotically unitarily equivalent} if, in addition, $u(1)=1_A$ in (\ref{dasym-1}).

We say that $\af$ is (strongly) asymptotically inner if $\af$ is (strongly) asymptotically unitarily equivalent to ${\rm id}_A.$
}
\end{df}

\begin{df}\label{dconj}
{\rm
Let $\af$ and $\bt$ be two automorphisms on $A.$
Automorphisms $\af$ and $\bt$ are outer conjugate if there exists an automorphism
$\sigma\in \Aut(A)$ and a unitary $u\in A$ such that
$$
\af={\rm Ad}\, u\circ \sigma^{-1}\circ \bt\circ \sigma.
$$
Outer conjugacy is an equivalent relation.
Denote by $\Aut_R(A)/\sim_{cc}$ the outer conjugate classes of automorphisms
in $\Aut_R(A).$

We say $\af$ and $\bt$ are {\it strongly} outer conjugate if they are outer
conjugate with $\sigma$ being strongly asymptotically inner.  Denote by $\Aut_R(A)/\sim_{scc}$ the strongly outer conjugacy classes of automorphisms in $\Aut_R(A).$

}
\end{df}

\begin{df}\label{Duccc}
{\rm
Two automorphisms $\af, \bt\in \Aut(A)$ are said to be
outer conjugate and uniformly approximately conjugate if
there exists a sequence of a unitaries $u_n\in U(A)$ and a sequence
of automorphisms $\sigma_n\in \Aut(A)$ such that
\beq\label{Duccc-1}
\af={\rm Ad}\, u_n\circ \sigma_n^{-1}\circ \bt\circ \sigma_n\andeqn
\lim_{n\to\infty}\|u_n-1\|=0.
\eneq

Note, in this case,
\beq\label{Duccc-1+}
\lim_{n\to\infty}\|\af-\sigma_n^{-1}\circ \bt \circ \sigma_n\|=0.
\eneq

Two automorphisms $\af,\,\bt\in \Aut(A)$ are said be
{\it strongly} outer conjugate and  uniformly  approximately conjugate
if there exists a sequence of unitaries $u_n\in U(A)$ and
a sequence of strongly asymptotically inner automorphisms $\sigma_n\in \Aut(A)$ such
that
\beq\label{Duccc-2}
\af={\rm Ad}\, u_n\circ \sigma_n^{-1}\circ \bt \circ \sigma_n\andeqn
\lim_{n\to\infty}\|u_n-1\|=0.
\eneq

Denote by $\Aut_R(A)/\sim_{aucc}$ the outer conjugate and uniformly
approximately conjugate classes of automorphisms in $\Aut_R(A),$ and denote
by $\Aut_R(A)/\sim_{saucc}$ the strongly outer conjugate and uniformly approximately conjugate classes of automorphisms.

}

 \end{df}

\begin{df}\label{dcu}
{\rm
Let $A$ be a unital \CA. Denote by $T(A)$ the tracial state space of $A.$
Denote by $\rho_A: K_0(A)\to \Aff (T(A))$ the \hm\, defined by
$\rho_A([p])(\tau)=\tau(p)$ for all projections $p\in M_{\infty}(A)$ and for all
$\tau\in T(A).$

Denote by $U(A)$ the unitary group and $U_0(A)$ the normal subgroup
of $U(A)$ which consists of those unitaries in the path connected component containing the identity.
Denote by $DU(A)$ the commutator subgroup of $U(A)$ and by $CU(A)$ the closure of $DU(A).$
If $u\in U(A),$ denote by ${\bar u}$ the image of $u$ in  $U(A)/CU(A).$
Let $u\in U_0(A)$ and choose a piece-wise smooth and continuous path $\{u(t):t\in [0,1]\}$ of unitaries in $A$ with
$u(0)=u$ and $u(1)=1_A.$ Denote by
$$
\Delta(u)(\tau)={1\over{2\pi i}}\int_0^1\tau({du(t)\over{dt}}u(t)^*))dt\tforal \tau\in T(A).
$$
the de la Harpe and Skandalis determinant.  We use $\Delta: U_0(A)/DU(A)\to
\Aff T(A)/\rho_A(K_0(A))$ for the induced \hm.  We will use $\overline{\Delta}: U_0(A)/CU(A)\to \Aff(T(A))/\overline{\rho_A(K_0(A))}$ for the induced
\hm.
Note  that when $A$ has tracial rank at most one
the map from $U_0(A)/CU(A)\to U_0(M_n(A)/CU(M_n(A))$ is an isomorphism (see Cor. 3.5 of \cite{Lnhomt1}).  In this case, by a theorem of K. Thomsen (Cor. 3.2 of \cite{Kthm}),
$\Delta$ and $\overline{\Delta}$ are isomorphisms. Moreover,  in this case, Thomsen (Cor. 3.3 of \cite{Kthm}) also
provided the following splitting short exact sequence:
\beq\label{dcu-1}
0\to \Aff(T(A))/{\overline{\rho_A(K_0(A))}}\stackrel{\overline{\Delta}^{-1}}{\longrightarrow} U(A)/CU(A)\stackrel{\pi_1}{\rightleftharpoons}_{s_1} K_1(A)\to 0,
\eneq
where $\pi_1: U(A)/CU(A)\to U(A)/U_0(A)\cong K_1(A)$ is the quotient map and $s_1$ is a fixed splitting \hm.
Please note that $\pi_1$ and $s_1$ will be used later.

}
\end{df}

\begin{df}\label{Dcel}
{\rm 
Let $A$ be a unital \CA\, and let $u\in U_0(A).$ Denote by ${\rm cel}(u)$
the  infimum of all the length of the paths in $U_0(A)$ starting at $u$ and ending 
at $1_A.$
}
\end{df}

\begin{df}\label{dAq}
{\rm
Let $A$ be a unital \CA\, and let  $A^{\bf 1}$ be the unit ball of $A.$  Denote by $A^q$ the image
of $A_{s.a.}$ of the map given by $a\mapsto \hat{a},$ where $\hat{a}(\tau)=\tau(a)$ for all
$\tau\in T(A)$ and for all $a\in A_{s.a.}.$  By \cite{CP},
$A^q=\Aff(T(A))$ (see also Cor. 3.10 of \cite{BPT}). Denote by $A^{q,{\bf 1}}$ the image of $A^{\bf 1}$ in $A^q$ and
by $A_+^{q,{\bf 1}}$ the image of $A_+^{\bf 1}$ in $A^q,$ respectively.
}
\end{df}

\begin{df}\label{Dfunc}
{\rm
Define a continuous function $h_\lambda$ (for some $\lambda\in (0,1/2)$) on $[0,1]$ as follows
$$
h_\lambda=\begin{cases} 0, &  0\le t\le \lambda\\
                               t/\lambda-1, &  \lambda< t<2\lambda\\
                               1, & 2\lambda\le t\le 1.
                               \end{cases}
                               $$

}
\end{df}

\begin{df}\label{dfdimfunc}
{\rm
Let $A$ and $h_\lambda$ be as above. Let $a\in A_+\setminus\{0\}.$
Define, for each $\tau\in T(A),$
\beq\label{Ddim-1}
d_{\tau}(a)=\lim_{\lambda\to 0} \tau(h_\lambda(a))
\eneq
}
\end{df}

\begin{df}\label{dangle}
{\rm
Let $a\in A$ be an element such that $\|a^*a-1\|<1/8$ and $\|aa^*-1\|<1/8.$
Then $a(a^*a)^{-1/2}$ is a unitary in $A.$
 Set $\langle a \rangle =a(a^*a)^{-1/2}.$ Note that $\|\langle a\rangle-a\|<1.$  Moreover,
$\|\langle a \rangle -a\|$  is small  if   $\|a^*a-1\|$ is small (which does not depend on $A$ or $a$).
We will use this notation throughout the paper.
}
\end{df}

\begin{df}\label{dJS}
{\rm
Throughout this paper ${\cal Z}$ is the Jiang-Su algebra which is a unital projectionless simple
ASH-algebra with a unique tracial state (\cite{JS}).  ${\cal Z}$ is a strongly self-absorbing algebra. So  $\otimes_{n\in \Z}{\cal Z}\cong {\cal Z}.$
Denote by $\sigma_0: \bigotimes_{n\in \Z}{\cal Z}\to \bigotimes_{n\in \Z}{\cal Z}$ the shift, i.e.,
$\sigma_0(\cdots \otimes a_{-1}\otimes a_0\otimes a_1\otimes \cdots)=(\cdots\otimes a_{-2}\otimes a_{-1}\otimes a_0
\otimes \cdots)$ for all
$(\cdots a_{-1}\otimes a_0\otimes a_1\otimes \cdots)\in \bigotimes_{n\in \Z} {\cal Z}.$
By identifying $\bigotimes_{n\in \Z} {\cal Z}$ with ${\cal Z},$ we view
$\sigma_0$ is an automorphism on ${\cal Z}.$

We then write ${\cal Z}=\bigotimes_{n\in \N}{\cal Z}$ and denote
by $\sigma$ the $\otimes_{n\in \N}\sigma_0.$

It  is a strongly asymptotically inner automorphism.
}
\end{df}

\begin{df}\label{dH1}
{\rm
Let $A$ be a unital \CA.
Recall that
$$
H_1(K_0(A), K_1(A))=\{x\in K_1(A): \psi([1_A])=x \,\,\,{\rm for\,\,\, some\,\,\,}\psi\in {\rm Hom}(K_0(A), K_1(A))\}.
$$
If $K_0(A)=\Z\cdot [1_A]\oplus G$ for some abelian group $G,$ then
$H_1(K_0(A), K_1(A))=K_1(A).$  If $[1_A]$ does not have finite order and  $K_1(A)$ is divisible, then $H_1(K_0(A), K_1(A))=K_1(A).$
}
\end{df}

\begin{df}\label{dappdiv}
{\rm
Let $A$ be a unital \CA. We say that $A$ is  tracially approximately divisible,
if the following hold:
For any $\ep>0,$ any $a\in A_+\setminus \{0\},$ any finite subset ${\cal F}\subset A$ and any integer
$N\ge 1,$  there exists a projection $p\in A$ and a finite dimensional \SCA\, $C\cong \bigoplus_{i=1}^J M_{r_i}$ with $r_i\ge N$ and with $1_C=p$ such that
\beq\nonumber
\|[p, x]\|<\ep\tforal x\in {\cal F},\,\,\,(1-p)\lesssim a\, \,\, {\rm (in \,\,\, Cuntz\,\,\, relation)},\\
\|[pxp, c]\|<\ep\tforal x\in {\cal F} \andeqn \tforal c\in C\,\,\,{\rm with}\,\,\, \|c\|\le 1.
\eneq

It is proved in 5.4 of \cite{Lntr=1} that every  unital \CA\, with tracial rank at most one is tracially approximately
divisible.}
\end{df}

\begin{df}\label{Klbar}
{\rm
Let $A$ be a \CA. Using the notation introduced in \cite{DL}, we denote
$$
\underline{K}(A)=\bigoplus_{i=0,1}(K_i(A)\bigoplus \bigoplus_{n\ge 1} K_i(A, \Z/n\Z)).
$$
We will use $\boldsymbol{\bt}: \underline{K}(A)\to \underline{K}(A\otimes C(\T))$ for the injective \hm\, defined in 2.10 of \cite{Lnmem}.

Let $A$ and $B$ be two unital \CA s and let $h: A\to B$ be a unital \hm. Suppose that
$u\in B$ such that $h(a)u=uh(a)$ for all $a\in A.$ This induces a unital \hm\, ${\bar h}: A\otimes C(\T)\to B.$ As in 2.10 of \cite{Lnmem},  the map ${\rm Bott}(h, u): \underline{K}(A)\to \underline{K}(B)$ is defined by
$$
{\rm Bott}(h, u)=[{\bar h}]\circ {\boldsymbol{\bt}}.
$$
In particular, one has
\beq\label{Bottplus-2}
{\rm Bott}(h, 1_B)=0.
\eneq
If $\{u(t): t\in [0,1]\}\subset B$ is a continuous path of unitaries and $h(a)u_t=u_th(a)$
for all $a\in A$ and $t\in [0,1],$
then
\beq\label{Bottplus-1}
{\rm Bott}(h, u_0)={\rm Bott}(h, u_t)\rforal t\in (0,1].
\eneq
Suppose that $u_1, u_2,...,u_k\in B$ are  unitaries such that
$h(a)u_i=u_ih(a)$ for all $a\in A,$ $i=1,2,...,k.$
Define $H: A\to M_k(B)$ by
$H(a)={\rm diag}(h(a),h(a),...,h(a))$ for all $a\in A.$
Let $U={\rm diag}(u_1,u_2,...,u_k).$ Then
\beq\label{Bottplus-1+1}
{\rm Bott}(H, U)=\sum_{i=1}^k {\rm Bott}(h, u_i).
\eneq
Now let $V={\rm diag}(u_1u_2\cdots u_k, 1,1,...,1).$ Then, by a standard homotopy, using
(\ref{Bottplus-1}), (\ref{Bottplus-1+1}) and (\ref{Bottplus-2}),
\beq\label{Bottplus-1+2}
{\rm Bott}(h, u_1u_2\cdots u_k)=\sum_{i=1}^k {\rm Bott}(h, u_i).
\eneq

}
\end{df}

\begin{df}\label{KBottmap}
{\rm
Let $A$ and $B$ be two unital \CA s and let $h: A\to B$ be a unital \hm.
Let ${\cal P}\subset \underline{K}(A)$ be a finite subset.  There exists $\dt_{\cal P}>0$
and a finite subset ${\cal G}_{\cal P}\subset A$ (both depend on ${\cal P}$ but not on
$A$ or $h$) such that
$$
{\rm Bott}(h, u)|_{\cal P}
$$
is well defined, when $u\in B$ is a unitary such that
$\|h(b)u-uh(b)\|<\dt_{\cal P}$ for all $b\in {\cal G}_{\cal P}.$  In fact, ${\rm Bott}(h, u)|_{\cal P}=[\psi]|_{{\boldsymbol{\bt}}({\cal P})},$ where $\psi: B\otimes C(\T)\to A$ is the map induced by $h$ and $u$
(we refer the reader to Definition 2.10 of \cite{Lnmem} for details).
In what follows, whenever we write
${\rm Bott}(h, u)|_{\cal P},$ we assume that $\|h(b)u-uh(b)\|<\dt\le \dt_{\cal P}$ and for all $b\in {\cal G}\supset {\cal G}_{\cal P}.$

Suppose that $\{u_t: t\in [0,1]\}\subset B$ is a continuous path of unitearies such that
$\|h(a)u_t-u_th(a)\|<\dt_{\cal P}$ for all $a\in {\cal G}_{\cal P}$ and for all $t\in [0,1].$
Then
\beq\label{Bottplus-3}
{\rm Bott}(h, u_0)|_{\cal P}={\rm Bott}(h, u_t)|_{\cal P}\tforal t\in (0,1].
\eneq
Suppose that $u_1, u_2,...,u_k\in B$ are  unitaries such that
$\|h(a)u_i-u_ih(a)\|<{\dt_{\cal G}\over{2k}}$ for all $a\in A,$ $i=1,2,...,k.$
Then, as above, using \ref{Klbar} and (\ref{Bottplus-3}),
%there exits $\dt_{\cal P}'>0$ and a finite subset ${\cal G}_{\cal P}\subset A$ such that,
\beq\label{Bottplus-4}
{\rm Bott}(h, u_1u_2\cdots u_k)|_{\cal P}=\sum_{i=1}^k{\rm Bott}(h,u_i)|_{\cal P},
\eneq
a fact that has been used many times (see  \cite{ER} for example).

}
\end{df}

\section{Homotopy Lemmas with controlling of length}
This section contains some refinement of homotopy lemmas in \cite{LN2}.

Let $\nabla: (0,1)\to (0,1)$ be a non-increasing function with $\lim_{t\to 0}\nabla(t)=0.$
Define
\beq\label{Delta0}
\nabla_{00}(t)=\nabla({1\over{2^n}}),\,\,\,\,\,{\rm if}\,\,\, t\in [{1\over{2^{n+1}}}, {1\over{2^n}})\andeqn\\
\nabla_0(t)={\sqrt{2}\over{48}}\nabla_{00}({t\sqrt{2}\over{6}})t.
\eneq
Then $\nabla_{00}$ and $\nabla_{0}$ are non-decreasing and
$\lim_{t\to 0}\nabla_0(t)=0.$
This notation will be used in the following.

\begin{lem}\label{HL1}
Let $C=PM_r(C(X))P, $ where $X$ is a   compact metric space, $r\ge 1$ is an integer and  where $P\in M_r(C(X))$ is a projection, and let $\nabla: (0, 1)\to(0, 1)$ be a non-decreasing function with $\lim_{t\to 0} \nabla(t)=0$
and
$1/2>\eta>0.$
%$$\mu_{\tau\circ\phi}(O_a)> \nabla(a) \tforal \tau\in T(A)$$  and for any open ball $O_a$
%of $X$ with radius $a>\eta$

Let $\mathcal F\subseteq C$, $\mathcal G'\subseteq C\otimes \mathrm{C}(\mathbb T)$, $\mathcal H\subseteq C\otimes \mathrm{C}(\mathbb T)$ be finite subsets, and let $\ep>0$. Then there are   $\dt>0$ and a finite subset $\mathcal G\subseteq C$ such that for any C*-algebra $A$ which is tracially approximately divisible {\rm (}see \ref{dappdiv}{\rm )},  any homomorphism $\phi: C\to A$ and  any unitary $u\in A$ with
\beq\label{HL1-1-1}
\|[\phi(c), u]\|<\delta\quad\forall c\in\mathcal G \andeqn\\
\mu_{\tau\circ\phi}(O_a)> \nabla(a) \tforal \tau\in T(A)
\eneq
 ($\mu_{\tau\circ \phi}$ is the Borel measure induced by the positive linear functional $\tau\circ \phi$) for any open ball $O_a$
of $X$ with radius $a>\eta,$
%and any $\eta>0,$
there exist unitaries $w_1, w_2\in A$, a path of unitaries $\{w(t); t\in[0, 1]\}\subset A$ with $w(0)=1$ and $w(1)=w_1w_2w_1^*w_2^*=:w$, and a completely positive $\mathcal G'$-$\ep$-multiplicative linear maps $L_1, L_2: C\otimes\mathrm{C}(\mathbb T) \to A$  such that
%\begin{enumerate}
%\item\label{pert-mea-cond-1}
\beq\label {hl1-1}
&&\|[w_i, \phi(a)]\|<\ep \tforal a\in {\cal F},\,\,\,i=1, 2,\\\label
{hl1-2}
&&\|[w(t),\, u]\|<\ep,\,\,\, \|[w(t), \phi(a)]\|<\ep,\,\,\,\tforal a\in\F,\,\,\, \tand t\in[0, 1],\\\label{hl-3}
&&\|L_1(a\otimes z)-(\phi(a)uw)\|<\ep,\quad \|L_1(a\otimes 1)-\phi(a)\|<\ep,\quad\tforal a\in\F,\\\label{hl-4}
 &&\|L_2(a\otimes z)-(\phi(a) w)\|<\ep,\quad \|L_2(a\otimes 1)-\phi(a)\|<\ep,\quad\tforal a\in\F,\\\label{hl-5}
&&|\tau\circ L_1(g)-\tau\circ L_2(g)|<\ep,\quad\tforal g\in \mathcal H,\ \tforal \tau\in \mathrm{T}(A),
\eneq
and
$$\mu_{\tau\circ L_i}(B_a)>\nabla_0(a),\quad i=1, 2, \tforal \tau\in T(A)$$ and for any open ball $B_a$ of $X\times\mathbb T$ with radius $a>3\sqrt{2}\eta$.

Moreover,
\beq\label{hl-6}
{\rm length}\{w(t)\}\le \pi.
\eneq
Furthermore, one may require that $w(t)\in CU(A)$ for all $t\in [0,1]$ but with
\beq\label{hl-7}
{\rm length}\{w(t)\}\le 2\pi.
\eneq
\end{lem}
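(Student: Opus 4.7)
The plan is to first apply the Basic Homotopy Lemma of \cite{LN2} to obtain a single unitary $v\in A$ approximately commuting with $\phi$, together with almost-multiplicative maps $L_1,L_2:C\otimes C(\T)\to A$ witnessing $(\phi,uv)$ and $(\phi,v)$ and satisfying the required trace and measure estimates. The task is then to \emph{upgrade} $v$ to a single commutator $w_1w_2w_1^*w_2^*$ of unitaries that themselves approximately commute with $\phi$, while forcing the controlled length estimates (\ref{hl-6}) and (\ref{hl-7}). The mechanism for this upgrade is the tracial approximate divisibility of $A$ (\ref{dappdiv}): for a prescribed small tolerance and a sufficiently large integer $N$, we extract a finite-dimensional \SCA\ $D\cong \bigoplus_{j=1}^J M_{r_j}\subseteq A$ with $r_j\ge N$, whose unit $p$ almost commutes with $\phi(\F)\cup\{u,v\}$, and with $1-p$ negligible in every trace.

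Inside $D$, I replace $v$ by $v'\in D$ with $\|v'-v\|<\ep/4$, and in each block $M_{r_j}$ I use the classical fact that every element of $SU(r_j)$ is a single commutator (Thompson) to realise $v'|_{M_{r_j}}$, after a small scalar correction, as $w_1^{(j)}w_2^{(j)}(w_1^{(j)})^*(w_2^{(j)})^*$. The explicit construction uses the standard pair $(Z_1,Z_2)$ in $M_{r_j}$, where $Z_1=\operatorname{diag}(1,\zeta_j,\ldots,\zeta_j^{r_j-1})$, $\zeta_j=e^{2\pi i/r_j}$, and $Z_2$ is the cyclic shift, so that $Z_1Z_2Z_1^*Z_2^*=\zeta_j 1_{M_{r_j}}$; perturbing $Z_1,Z_2$ absorbs the spectral content of $v'|_{M_{r_j}}$ when that content consists of small-angle rotations. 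Taking the direct sum over $j$ and perturbing back into $A$ produces $w_1,w_2\in A$ with $w:=w_1w_2w_1^*w_2^*$ satisfying $\|w-v\|<\ep/2$. Substituting $w$ for $v$ in the output of \cite{LN2} delivers (\ref{hl1-1})--(\ref{hl-5}).

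The length bound (\ref{hl-6}) follows because $w$ is norm-close to $1_A$: write $w=\exp(ih)$ with $h=h^*$, $\|h\|\le\pi$, so that $w(t)=\exp(ith)$ is a path of length $\le\pi$ from $1_A$ to $w$ that automatically almost commutes with $\phi$ and $u$ by spectral calculus. For (\ref{hl-7}) the straight exponential path may leave $CU(A)$, so I replace it by a two-stage path staying in $DU(A)\subseteq CU(A)$: shrink $w_1$ to $1_A$ along an exponential path inside $D$ of length $\le\pi$ while keeping $w_2$ fixed (at each time $t$ the state is $[w_1(t),w_2]\in DU(A)$), then shrink $w_2$ the same way. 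The total length is at most $2\pi$.

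The main obstacle is propagating the measure lower bound through this construction: the hypothesis $\mu_{\tau\circ\phi}(O_a)>\nabla(a)$ on $X$ must produce $\mu_{\tau\circ L_i}(B_a)>\nabla_0(a)$ on $X\times\T$ for radii $a>3\sqrt 2\eta$, uniformly in $\tau$. This is precisely why $\nabla_0$ in (\ref{Delta0}) combines the discretisation $\nabla_{00}$ with the scaling factors $\sqrt 2/48$ and $\sqrt 2/6$: one factors a ball $B_a\subset X\times\T$ into a product of an $X$-ball of radius $>\eta$ (to which $\nabla$ applies, losing a discretisation factor to $\nabla_{00}$) and a $\T$-arc (whose measure is controlled by the near-uniform distribution of the spectrum of $w$ on $\T$, guaranteed by large $r_j$). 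Maintaining this estimate uniformly in $\tau\in T(A)$ simultaneously with the length bound and the trace-closeness (\ref{hl-5}) is the most delicate ingredient, and it dictates most of the choices of $\delta$ and $\mathcal G$.
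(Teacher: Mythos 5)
Your handling of the ``Moreover/Furthermore'' part is broadly in the right direction (the paper, too, gets length $\le \pi$ because the path can be taken inside the matrix block, and gets the $CU(A)$ version with length $\le 2\pi$ from a trace-zero logarithm there; your alternative of moving $w_1(t)$ and staying on commutators $w_1(t)w_2w_1(t)^*w_2^*\in DU(A)$ would also work). But note that your stated reason for (\ref{hl-6}) is wrong: $w$ is not norm-close to $1_A$, and ``spectral calculus'' does not give the almost-commutation of $\exp(ith)$ with $\phi({\cal F})$ and $u$ when $-1\in{\rm sp}(w)$; what actually gives both the logarithm of norm $\le\pi$ and the commutation is that the whole path lies in $(1-p)+D$.

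The main body of your argument has a genuine gap. First, the input unitary $v$ does not exist as claimed: the Basic Homotopy Lemma homotopes an almost-commuting unitary to $1$; it does not produce a unitary together with ${\cal G}'$-$\ep$-multiplicative maps $L_1,L_2$ satisfying $\mu_{\tau\circ L_i}(B_a)>\nabla_0(a)$ --- producing exactly that data is the content of the statement being proved (Lemma 3.7 of \cite{LN2}), so step 1 is circular. Second, the step ``replace $v$ by $v'\in D$ with $\|v'-v\|<\ep/4$'' fails: since $1_D=p$ and $v'(1-p)=0$, one has $\|v-v'\|\ge\|v(1-p)\|=\|1-p\|=1$ whenever $p\neq 1_A$; and even inside the corner, $pvp$ approximately commutes with every contraction in $D$, so it can only be close to the abelian center of $D$, never to a general element of $D$. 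Third, your measure bound relies on ``near-uniform distribution of the spectrum of $w$ on $\T$, guaranteed by large $r_j$'', but a large block does not force uniform distribution, and uniform distribution is incompatible with $\|w-v\|<\ep/2$ for a prescribed $v$. In the correct construction there is no pre-existing $v$: one chooses $w$ (equivalently $w_1,w_2$) directly inside the almost-central finite-dimensional block, e.g.\ conjugate to the diagonal unitary whose eigenvalues are all the $r_j$-th roots of unity (this is where the commutator realization enters), so that its spectral distribution under every $\tau\in T(A)$ is nearly uniform; $L_1,L_2$ are then defined from $(\phi,uw)$ and $(\phi,w)$, and the product-ball estimate yielding $\nabla_0$ follows from the hypothesis on $\mu_{\tau\circ\phi}$ together with this uniformity and the smallness of $1-p$ in trace. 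The paper itself simply quotes Lemma 3.7 of \cite{LN2} for all of this and only adds the length and $CU(A)$ control.
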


\begin{proof}
The lemma  follows from  Lemma 3.7 of \cite{LN2} except the last part beginning with ``Moreover".
So the proof is exactly the same but we need to justify (\ref{hl-6}) (as well as (\ref{hl-7})) above.

Note that in the proof of Lemma 3.7 of \cite{LN2},
$$
\phi({\cal F}\cup {\cal F}_1\cup {\cal G}\cup {\cal T})\in_{\ep''} B'\cap A\andeqn u\in_{\ep''} B'\cap A.
$$
Moreover,
$w'(t)\in B$ for all $t\in [0,1]$ and $B\cong M_k$ for some integer $k\ge 1.$
Therefore the length of $\{w'(t)\}$ could be made no more than $\pi.$ Since
$w(t)=(1-p)+w'(t),$ as in the proof of Lemma 3.7 of \cite{LN2}, the length of $\{w(t)\}$ could  be
controlled by $\pi.$

Note that $w(1)=w\in CU(B).$  We may write $w=\exp(ih)$ with $h\in B,$ $\|h\| \le 2\pi$ and $\tau(h)=0$ for all $\tau\in T(B).$
One then can choose $w'(t)=\exp(i(1-t)h)$ for $t\in [0,1].$ Since $\tau(th)=t\tau(h)=0$ for all $t\in [0,1].$
$w'(t)\in CU(B)\subset CU(A)$ for all $t\in [0,1].$
\end{proof}

Let $B_a$ be an open ball with center $\xi\in X$ and radius $a\ge 1.$
One may note that, if $r\in (0,1)$, then
$\mu_{\tau\circ \phi}(B_a)\ge \mu_{\tau\circ \phi}(B_r),$ where
$B_r$ is the open ball with the same center and radius $r.$

\begin{thm}\label{HL2}
Let $C=\mathrm{C}(X)$ with $X$ a compact
metric space
and let $\nabla: (0, 1) \to (0, 1)$ be a non-decreasing map. For any $\ep>0$ and any finite subset $\mathcal F \subseteq C$, there exists $\delta>0$, $\eta>0$, $\gamma>0$, finite subsets $\mathcal G \subseteq C$, $\mathcal P\subseteq \underline{K}(C)$,  a finite
 subset ${\cal Q}=\{x_1, x_2,...,x_m\}\subset K_0(C)$ which generates a free
 subgroup and $x_i=[p_i]-[q_i],$ where $p_i, q_i\in M_n(C)$
 (for some integer $n\ge 1$) are projections,
 satisfying the following:

Suppose that $A$  is a unital simple C*-algebra with $TR(A)\leq 1$, $\phi: C\to A$ is a unital homomorphism and $u\in A$ is a unitary, and suppose that
$$\|[\phi(c), u]\|<\delta,\ \tforal c\in\mathcal G\quad \tand \quad \mathrm{Bott}(\phi, u)|_{\mathcal P}=0,$$
$$\mu_{\tau\circ \phi}(O_a)\geq\nabla(a)\ \tforal \tau\in T(A),$$
where $O_a$ is any open ball in $X$ with radius $\eta\leq a<1$ and $\mu_{\tau\circ \phi}$ is the Borel probability measure defined
by $\tau\circ \phi,$  and, for each $1\leq i\leq m$,
there is $v_i\in CU(M_n(A))$ such that
\beq\label{HL2-1}
\|\langle(\mathbf 1_n-\phi(p_i)+\phi(p_i)({\bf 1}_n\otimes {u}))
(\mathbf 1_n-\phi(q_i)+\phi(q_i)({\bf 1}_n\otimes {u}^*))\rangle-v_i\|<\gamma.
\eneq
Then there is a continuous path of unitaries $\{u(t): t\in[0, 1]\}$ in $A$ such that
$$u(0)=u, u(1)=1,\ \textrm{and}\ \|[\phi(c), u(t)]\|<\ep$$
for any $c\in\mathcal F$ and for any $t\in[0, 1]$.

Moreover,
\beq\label{HL2-2}
{\rm length}(\{u(t)\})\le 2\pi+\ep.
\eneq

One can also  require that
\beq\label{HL2-3}
{\rm dist}(u(t), CU(A))<\ep
\eneq
with ${\rm length}(\{u(t)\})\le 4\pi+\ep.$
%Furthermore,  one can further require that,
%for some $t_0\in (0,1),$
%$u(t)\in CU(A)$ for all $t\in [t_0, 1]$ and
%\beq\label{HL2-%}
%\|u(t)-u\|<\ep\tforal t\in [0,t_0]
%\eneq
%with ${\rm cel}(\{u(t)\})\le 6\pi+\ep.$

% If, in addition, $u\in DU(A),$ one may further require that
%$u(t)\in CU(A)$ for all $t\in [0,1].$

\end{thm}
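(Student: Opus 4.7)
The plan is to upgrade the Basic Homotopy Lemma of \cite{LN2}, which produces a path $\{u(t)\}$ from $u$ to $1$ approximately commuting with $\phi$ but with no control on length, by exploiting the extra hypothesis (\ref{HL2-1}) to bound the exponential length. The heuristic is that the exponential length in $U_0(A)$ is obstructed by the image of the de la Harpe--Skandalis determinant, and, by Thomsen's theorem (recalled in \ref{dcu}), unitaries lying in $CU(A)$ admit exponential length at most $2\pi$. The aim is therefore to arrange that $u$, after a controlled commutator adjustment coming from Lemma \ref{HL1}, lies in $CU(A)$ up to error $\epsilon$, and then to invoke Thomsen's bound.

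First, I would choose $\mathcal{P}\subset \underline{K}(C)$ large enough that the vanishing of $\mathrm{Bott}(\phi,u)|_{\mathcal{P}}$ controls the Bott map on all generators, and $\mathcal{Q}=\{[p_i]-[q_i]\}$ generating a free subgroup of $K_0(C)$ whose pairing against $[u]$ via the Bott construction produces the $K_1$-valued invariants that appear in (\ref{HL2-1}). Applying Lemma \ref{HL1} with $\eta$ matched to the measure hypothesis yields commutator unitaries $w_1,w_2$, a path of length at most $\pi$ from $1$ to $w=w_1w_2w_1^*w_2^*$, and cpc maps $L_1,L_2$ approximately intertwining $\phi\otimes z$ with $\phi\cdot(uw)$ and $\phi\cdot w$ respectively, with tracial agreement (\ref{hl-5}). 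Using $L_1$ and $L_2$ I would translate $\mathrm{Bott}(\phi,u)|_{\mathcal{P}}=0$ together with the $CU(M_n(A))$-closeness (\ref{HL2-1}) into the statement that the unitary $uw^{-1}$, appropriately paired with $\phi$, lies $O(\gamma)$-close to $CU(A)$.

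Given this, Thomsen's bound furnishes a path from $uw^{-1}$ to $1$ of length at most $2\pi+\epsilon/2$ staying close to $CU(A)$. Concatenating with the reversed length-$\pi$ path linking $w^{-1}$ to $1$ coming from Lemma \ref{HL1} would a priori give length $3\pi$, but the improvement to $2\pi+\epsilon$ in (\ref{HL2-2}) comes from absorbing the commutator adjustment directly into the Thomsen path, up to a uniform perturbation of norm $\epsilon$ (both paths lie close to $CU(A)$ throughout). For (\ref{HL2-3}) I would instead invoke the $CU$-variant of Lemma \ref{HL1} with length bound $2\pi$, so that every intermediate unitary is $\epsilon$-close to $CU(A)$, yielding total length $4\pi+\epsilon$. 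The main obstacle I anticipate is the quantitative matching of $\gamma$ in (\ref{HL2-1}) to $\epsilon$ in the conclusion: the determinantal vanishing from $CU$-closeness is only approximate and checked on a finite generating set, so one must carefully verify, using the measure hypothesis $\nabla$ and a spectral-approximation argument, that these finitely many vanishings propagate to the full set of tracial directions demanded by Thomsen's splitting (\ref{dcu-1}).
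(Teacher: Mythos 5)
There is a genuine gap, and it sits at the heart of the statement. The conclusion is not merely that $u$ can be joined to $1$ by a path of length about $2\pi$; it is that the entire path $\{u(t)\}$ stays in the $\ep$-commutant of $\phi({\cal F})$. Your middle step --- deduce from ${\rm Bott}(\phi,u)|_{\cal P}=0$ and (\ref{HL2-1}) that $uw^*$ is $O(\gamma)$-close to $CU(A)$ and then invoke a Thomsen-type exponential length bound to get a path from $uw^*$ to $1$ of length about $2\pi$ --- produces a path that bears no relation to $\phi$: the results giving ${\rm cel}\le 2\pi$ for unitaries in (or near) $CU(A)$ (Thomsen's splitting (\ref{dcu-1}), or \cite{Lnexp2}) say nothing about commutation with $\phi({\cal F})$, and a generic short path will destroy exactly the property the theorem is about. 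This is not a bookkeeping issue that can be fixed by ``absorbing'' the commutator path of Lemma \ref{HL1} into the Thomsen path; the whole difficulty is to build the homotopy \emph{inside} the approximate commutant, which is what the measure-distribution hypothesis $\nabla$, the assumption $TR(A)\le 1$, and the data ${\cal G},{\cal P},{\cal Q}$ are for. Relatedly, you misidentify the role of (\ref{HL2-1}): it is not there to place $u$ (or $uw^*$) near $CU(A)$, but to match the $U/CU$-invariants of two maps on $C\otimes C(\T)$ so that a uniqueness theorem applies.

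The actual argument is the proof of Theorem 3.9 of \cite{LN2}, of which this theorem is only a length-controlled refinement. There, Lemma \ref{HL1} supplies the commutator $w=w_1w_2w_1^*w_2^*$, the path $\{w(t)\}$, and two almost multiplicative maps $L_1,L_2:C\otimes C(\T)\to A$ representing the pairs $(\phi,uw)$ and $(\phi,w)$; the hypotheses ${\rm Bott}(\phi,u)|_{\cal P}=0$, (\ref{HL2-1}) and the tracial estimate (\ref{hl-5}) together with the $\nabla$-condition verify the hypotheses of a uniqueness theorem, producing one unitary $z$ that almost commutes with $\phi({\cal F})$ and conjugates $uw$ approximately onto $w$. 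The path is then: $u$ times the path from $1$ to $w$ (so from $u$ to $uw$), a short bridge of length $2\arcsin(\ep_0/2)<\ep$ from $uw$ to $z w z^*$, and $z\{w(t)\}z^*$ reversed; every piece $\ep$-commutes with $\phi({\cal F})$ because the Lemma \ref{HL1} path does and $z$ almost commutes with $\phi$. The only new content here relative to \cite{LN2} is the length bound, and it comes from the refinement in Lemma \ref{HL1}: the commutator path lives, up to $1-p$, in a matrix corner, so it has length at most $\pi$ (or at most $2\pi$ if one insists it stays in $CU(A)$), giving totals $\pi+\ep+\pi\le 2\pi+\ep$ and $2\pi+\ep+2\pi\le 4\pi+\ep$ for (\ref{HL2-2}) and (\ref{HL2-3}). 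Your proposal replaces the uniqueness-theorem step by an exponential-length argument, and that substitution is where the proof fails.
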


{\bf Remark}:  Here as indicated in \ref{KBottmap},  $\dt<\dt_{\cal P}$ and ${\cal G}\supset {\cal G}_{\cal P}.$
Moreover, we also assume that $\dt$ is sufficiently small and ${\cal G}$ is sufficiently large
so that $\|\phi(p_i)({\bf 1}_n\otimes u)-({\bf 1}_n\otimes u)\phi(p_i)\|$ is so small that (\ref{HL2-1}) makes sense. Condition (\ref{HL2-1}) may be understood that we insist that image of elements in ${\cal Q}$ under ``the Bott map"
are in $CU(M_n(A)).$  However, we only need to require that the torsion free part of $K_0(A)$ has that property. This is the reason ${\cal Q}$
is used in the statement.

\begin{proof}
Note that, by Lemma \ref{HL1} of this paper,  in the proof of Theorem 3.9 of \cite{LN2}, the length of $\{w'(t)\}$ is at most
$\pi.$ Thus the length of $\{w(t): t\in [1/2, 1]\}$ in the proof of 3.9 of \cite{LN2} is at most $\pi$ and
the length of $\{w(t): t\in [0, 1/4]\}$ in the proof of Theorem 3.9 of \cite{LN2} is at most $\pi.$
We also note that the length of $\{w(t): t\in [1/4, 1/2]\}$ is no more than
$$
2\arcsin (\ep_0/2).
$$
Note that, at the beginning of the proof of Theorem 3.9 of \cite{LN2},  we assume $\ep_0<\ep/2.$  This will imply
that $2\arcsin(\ep_0/2)<\ep$ for $0<\ep_0<\pi/4.$
But we certainly can choose $\ep_0$ so  that  $2\arcsin(\ep_0/2)<\ep$
at  the proof of  3.9 of \cite{LN2}. So the total length of $\{w(t): t \in [0,1]\}$ is at most $2\pi+\ep$ (for any given $\ep$).

Note that, in the proof of Theorem 3.9 of \cite{LN2}, as in the proof of Lemma \ref{HL1}, $\{w'(t)\}$ can be chosen to be in
$CU(A)$ with length at most $2\pi.$ Thus $w(t)\in CU(A)$ for all $t\in [1/2,1]$ and $w(t)\in CU(A)$ for $t\in [0,1/4].$ However,
the length of $\{w(t): t\in [1/2, 1]\}$ and the length of $\{w(t): t\in [0,1/4]\}$  will be  both bounded by $2\pi.$
It follows from this and the above estimates that (\ref{HL2-2}) and (\ref{HL2-3}) hold.

%Note that the assumption (\ref{HL2-1}) implies that
%\beq\label{HL2-4}
%\|u-v\|<\ep/4
%\eneq
%for some $v\in CU(A),$ provided
%that, for example, $p_1=1$ and $q_1=0$ and $\gamma<\ep/4n.$
%
%Therefore, for the last statement and (\ref{HL2-3+1}),  one may assume that $u\in CU(A).$
%It follows from Lemma 4.5 of \cite{Lnexp2} that, for any $\ep>0,$
%there is $h\in A_{s.a.}$ such that $\|h\|\le 2\pi$ such that
%\beq\label{HL2-5}
%\|u-\exp(ih)\|<\ep/2\andeqn \tau(h)=0\tforal \tau\in T(A).
%\eneq
 %Thus, one may further assume that $u=\exp(i h)$ with $\|h\|\le 2\pi$ and $\tau(h)=0$ for all $\tau\in T(A).$
%
%Note,  as in the proof of Theorem 3.9 of \cite{LN2}, $w(1/4)=uw$ and
%$$
%\|uw-w(1/2)\|<\ep_0.
%$$
%We may assume assume that $\ep_0<1/2.$
%We write $uw(w(1/2)^*)=\exp(ih)$ with $\|h\|<(1/2)\pi.$ Note that $\tau(h)=

\end{proof}

\begin{df}\label{mea}
Let $X$ be a compact metric space and $P\in M_r(C(X))$ be a projection, where $r\geq 1$ is an integer. Put $C=PM_r(C(X))P$ and suppose $\tau\in T(C)$. It is known that there exists a probability measure $\mu_\tau$ on $X$ such that
$$\tau(f)=\int_X t_x(f(x)) d\mu_\tau(x),$$
where $t_x$ is the normalized trace on $P(x)M_rP(x)$ for all $x\in X$.
\end{df}

\begin{rem}
Regard $C(X)$ as the center of $C=PM_r(C(X))P$, and denote by $\iota: C(X)\to C$ the embedding. Then the measure $\mu_\tau$ is in fact induced by the trace $\tau\circ\iota$ on $C(X)$.
\end{rem}

\begin{cor}\label{CHL3}
Let $C=PM_r(C(X))P,$ where $X$ is a compact subset of a finite CW complex, $r\ge 1$ is an integer and $P\in M_r(C(X))$ is a projection.
Then Theorem \ref{HL2} holds for $C$ using the measure $\mu_{\tau}$ as in \ref{mea}.
\end{cor}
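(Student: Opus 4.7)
The plan is to observe that the proof of Theorem \ref{HL2} passes to $C=PM_r(C(X))P$ with only cosmetic changes, because the crucial ingredient, namely Lemma \ref{HL1}, has already been formulated in this generality. First I would check that the measure hypothesis carries over: by the remark following Definition \ref{mea}, the measure $\mu_\tau$ on $X$ coincides with the measure induced by the scalar trace $\tau\circ\phi\circ\iota$, where $\iota\colon C(X)\to C$ is the central embedding. So the lower bound $\mu_\tau(O_a)\ge\nabla(a)$ in the hypothesis of Corollary \ref{CHL3} is exactly what is needed to feed into Lemma \ref{HL1} applied to $\phi$, and since that lemma is already stated for $C=PM_r(C(X))P$ itself, its matrix-valued conclusions (the unitaries $w_1,w_2$, the path $\{w(t)\}$, and the almost-multiplicative maps $L_1,L_2$) are available directly for $\phi$, not merely for $\phi\circ\iota$.

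Next I would transfer the $K$-theoretic data. Since $X$ is a compact subset of a finite CW complex, $\underline K(C)$ is finitely generated and the torsion-free part of $K_0(C)$ is a finitely generated free abelian group. One can therefore select a finite subset ${\cal Q}=\{x_i\}=\{[p_i]-[q_i]\}\subset K_0(M_n(C))$ generating a free subgroup, with projections $p_i,q_i\in M_n(C)$, just as in the statement of Theorem \ref{HL2}. The vanishing hypothesis $\mathrm{Bott}(\phi,u)|_{\cal P}=0$ together with the $CU$-approximation condition \eqref{HL2-1} then provide the required Bott-obstruction control.

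With these two reductions in hand, I would simply repeat the proof of Theorem \ref{HL2} verbatim, using the three-interval decomposition of $[0,1]$ that appears there. On $[0,1/4]$ and on $[1/2,1]$ the path is produced by Lemma \ref{HL1} (applied to $\phi\colon C\to A$) and contributes length at most $\pi$ each, respectively at most $2\pi$ each in the $CU(A)$-refined version; on $[1/4,1/2]$ a rotation argument contributes at most $2\arcsin(\ep_0/2)<\ep$ once $\ep_0<\pi/4$ is chosen small enough. Summing over the three subintervals gives the length bounds \eqref{HL2-2} and \eqref{HL2-3}.

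The main potential obstacle is that the argument of Theorem 3.9 of \cite{LN2}, on which the proof of Theorem \ref{HL2} is modeled, was originally written for $C=C(X)$; steps that exploit the commutativity of the source algebra need to be re-examined when the source is $PM_r(C(X))P$. However, since $X$ has finite covering dimension and $P$ is locally trivializable, every such step reduces fiberwise to the commutative case, which is precisely the reduction already built into the formulation of Lemma \ref{HL1}. Therefore no genuinely new argument is needed beyond observing that each invocation of the $C(X)$-version can be replaced by the corresponding $PM_r(C(X))P$-version that we now have available.
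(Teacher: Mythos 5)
Your overall outcome is right, but the route you take is not the paper's, and the step you wave through is exactly where the content lies. The paper does not rerun the proof of Theorem \ref{HL2} with $C=PM_r(C(X))P$ in place of $C(X)$; it deduces the corollary from the already-proved $C(X)$ case of Theorem \ref{HL2} by invoking the reduction argument in the proof of Theorem 3.10 of \cite{LN2} (with the measure read through $\mu_\tau$ as in \ref{mea}), just as Theorem \ref{HL2} itself was obtained by revisiting the proof of Theorem 3.9 of \cite{LN2} with the length bookkeeping added. That division of labor in \cite{LN2} --- 3.9 for $C(X)$, 3.10 as a separate reduction for corners of matrix algebras --- is not an accident: the proof of 3.9 uses the commutative source in more places than Lemma 3.7 (= Lemma \ref{HL1} here), and the passage to $PM_r(C(X))P$ is a genuine argument, not a relabeling.

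This is where your proposal has a gap. Your justification that ``every such step reduces fiberwise to the commutative case, which is precisely the reduction already built into the formulation of Lemma \ref{HL1}'' is an assertion, not a proof. Lemma \ref{HL1} being stated for $PM_r(C(X))P$ carries only the steps of the proof of Theorem \ref{HL2} that factor through that lemma; it does not make the remaining steps of the proof of Theorem 3.9 of \cite{LN2} apply verbatim to a noncommutative source. Moreover, ``fiberwise'' has no meaning here: the unitary $u$, the path $u(t)$, the approximate commutation, the Bott obstruction and the $CU(A)$ conditions all live globally in the simple target algebra $A$, and local triviality of $P$ over $X$ gives no mechanism for reducing the statement about $\phi\colon PM_r(C(X))P\to A$ to statements about $C(X)$; in particular a path produced for the restriction to the center $\phi\circ\iota$ need not almost commute with all of $\phi(C)$, and any genuine reduction (which is what \cite{LN2}, Theorem 3.10 supplies) must also be checked to preserve the length estimates $2\pi+\ep$ and $4\pi+\ep$, since that is the new content of this section. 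So either quote and rerun the proof of Theorem 3.10 of \cite{LN2} with the length control, as the paper does, or actually carry out the corner-to-$C(X)$ reduction; as written, the decisive step is assumed rather than proved.
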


\begin{proof}
We will use \ref{HL2} and the  proof of Theorem 3.10 in \cite{LN2}.
\end{proof}

The following is an easy compactness fact.
\begin{prop}\label{positive}
Let $X$ be a compact metric space and $C=PM_r(C(X))P,$ where $r\ge 1$ is an integer and $P\in M_r(C(X))$ is a projection.
Let ${\overline \nabla}: C^{q,{\bf 1}}_+\setminus \{0\}\to (0,1)$ be an order preserving  map.
Then there is non-decreasing function $\nabla: (0,1)\to (0,1)$ such that,
for any $\eta>0,$ there is a finite subset ${\cal H}\subset C^{\bf 1}_+ \setminus \{0\}$ satisfying the following:
If $\phi: C\to A$ is a unital injective \hm, where $A$ is a unital simple
\CA\, with $T(A)\not=\emptyset,$  such that
$$
\tau\circ\phi(h)\ge {\overline\nabla}(\hat{h})\tforal \tau\in T(A)
$$
and for all $h\in {\cal H},$ then
$$
\mu_{\tau\circ \phi}(O_r)\ge \nabla(r).
$$
for all $\tau\in T(A)$ and for all $r\ge \eta.$

\end{prop}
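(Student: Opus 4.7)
The plan is a compactness argument: construct, once and for all, a ladder of bump functions indexed by dyadic scales in $X$, use the finite minima of $\overline{\nabla}$ on these ladders to define $\nabla$, and then, for each given $\eta$, retain only the finitely many ladder levels needed to handle radii $r\ge\eta$.

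Concretely, for each $n\ge 1$ set $R_n:=2^{-n}$ and, by compactness of $X$, pick a finite $R_n/16$-net $\{y_j^n\}_{j=1}^{M_n}$. Attach to each $(n,j)$ a continuous bump $g_j^n\colon X\to[0,1]$ with $g_j^n=1$ on $B(y_j^n,R_n/16)$ and ${\rm supp}(g_j^n)\subset B(y_j^n,R_n/8)$, and set $h_j^n:=g_j^n\,P\in C^{\bf 1}_+\setminus\{0\}$, ${\cal H}_n:=\{h_j^n:1\le j\le M_n\}$, and $\tilde\nabla(n):=\min_{j}\overline{\nabla}(\hat{h_j^n})>0$. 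With $n(r):=\lceil\log_2(1/r)\rceil$, define
$$
\nabla(r):=\min_{1\le k\le n(r)}\tilde\nabla(k)\in(0,1),\qquad r\in(0,1).
$$
Since $n(r)$ is non-increasing in $r$, the index set $\{1,\ldots,n(r)\}$ shrinks as $r$ grows, so $\nabla$ is non-decreasing. For given $\eta>0$ take ${\cal H}:=\bigcup_{n=1}^{n(\eta)}{\cal H}_n$, a finite subset of $C^{\bf 1}_+\setminus\{0\}$.

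To verify the conclusion, let $\phi$ satisfy the hypothesis and fix $\tau\in T(A)$, $r\ge\eta$, and an open ball $O_r=B(y,r)$. Then $n(r)\le n(\eta)$, so ${\cal H}_{n(r)}\subset{\cal H}$. The net property produces some $y_j^{n(r)}$ with $d(y_j^{n(r)},y)<R_{n(r)}/16$, and then
$$
{\rm supp}(g_j^{n(r)})\subset B(y_j^{n(r)},R_{n(r)}/8)\subset B(y,3R_{n(r)}/16)\subset B(y,R_{n(r)})\subset O_r,
$$
where the last inclusion uses $R_{n(r)}\le r$. Since $0\le g_j^{n(r)}\le 1$ is supported in $O_r$, the measure description of Definition \ref{mea} yields
$$
\tau\circ\phi(h_j^{n(r)})=\int_X g_j^{n(r)}(x)\,d\mu_{\tau\circ\phi}(x)\le\mu_{\tau\circ\phi}(O_r),
$$
while the hypothesis supplies the opposite inequality $\tau\circ\phi(h_j^{n(r)})\ge\overline{\nabla}(\hat{h_j^{n(r)}})\ge\tilde\nabla(n(r))\ge\nabla(r)$. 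Chaining the two gives $\mu_{\tau\circ\phi}(O_r)\ge\nabla(r)$.

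The one delicate point is arranging non-decreasing behaviour of $\nabla$. Setting $\nabla(r):=\tilde\nabla(n(r))$ directly would not obviously work: bumps at different scales sit at different centers, so the elements $\hat{h_j^n}\in {\rm Aff}(T(C))$ are not pointwise comparable across scales, and there is no a priori reason for $\tilde\nabla(n)$ to be monotone in $n$. Passing to the running minimum $\min_{k\le n(r)}\tilde\nabla(k)$ repairs monotonicity, and the resulting weakening of the bound is harmless because the scale-$n(r)$ bound $\tilde\nabla(n(r))$ that the argument actually produces already dominates $\nabla(r)$ by construction.
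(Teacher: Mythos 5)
Your argument is correct, and since the paper offers no proof of \ref{positive} beyond calling it ``an easy compactness fact,'' your construction is exactly the intended one: finitely many bump functions per dyadic scale attached to a finite net, the hypothesis applied only at the single scale $n(r)\le n(\eta)$, the identity $\tau\circ\phi(gP)=\int_X g\,d\mu_{\tau\circ\phi}$ from Definition \ref{mea}, and a running minimum over scales to force $\nabla$ to be non-decreasing. The only implicit point, which the paper also assumes in Definition \ref{mea} (where the normalized trace $t_x$ on $P(x)M_rP(x)$ must exist), is that $P(x)\neq 0$ for every $x\in X$, so that each $h^n_j=g^n_jP$ is indeed a nonzero element of $C^{\bf 1}_+$; if $P$ vanished on an open set the statement itself would fail for small balls inside that set, so this is a hypothesis of the proposition rather than a gap in your proof.
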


The next is a restatement of \ref{CHL3} using \ref{positive}.

\begin{cor}\label{CHL2}
Let $X$ be a compact subset of a finite CW complex and
let $C=PM_r(C(X))P,$ where $r\ge 1$ is an integer and $P\in M_r(C(X))$ is a projection. Let ${\overline\nabla}: C^{q, {\bf 1}}_+\setminus \{0\}\to (0,1)$ be
an order preserving  map.
For any $\ep>0$ and any finite subset $\mathcal F \subseteq C$, there exists $\delta>0$,  $\gamma>0$,  finite subsets $\mathcal G \subseteq C$, $\mathcal P\subseteq \underline{K}(C)$,  a finite
 subset ${\cal Q}=\{x_1, x_2,...,x_m\}\subset K_0(C)$ which generates a free
 subgroup and $x_i=[p_i]-[q_i],$ where $p_i, q_i\in M_n(C)$
 (for some integer $n\ge 1$) are projections and
 there exists a finite subset ${\cal H}\subset C^{{\bf 1}}_+\setminus \{0\}$
satisfying the following:

Suppose that $A$  is a unital simple C*-algebra with $TR(A)\leq 1$, $\phi: C\to A$ is a unital homomorphism and $u\in A$ is a unitary, and suppose that
$$
\|[\phi(c), u]\|<\delta,\ \tforal c\in\mathcal G\quad \tand\quad \mathrm{Bott}(\phi, u)|_{\mathcal P}=0,
$$
$$
\tau\circ \phi(h)\ge {\overline\nabla}(\hat{h}) \tforal \tau\in T(A)
$$
and for all $h\in {\cal H},$
and, for each $1\leq i\leq m$,
there is $v_i\in CU(M_n(A))$ such that
\beq\label{CCHL2-1}
\|\langle(\mathbf 1_n-\phi(p_i)+\phi(p_i)({\bf 1}_n\otimes u))
(\mathbf 1_n-\phi(q_i)+\phi(q_i)({\bf 1}_n\otimes u^*))\rangle-v_i\|<\gamma,
\eneq
Then there is a continuous path of unitaries $\{u(t): t\in[0, 1]\}$ in $A$ such that
$$u(0)=u, u(1)=1,\ \textrm{and}\ \|[\phi(c), u(t)]\|<\ep$$
for any $c\in\mathcal F$ and for any $t\in[0, 1]$.

Moreover,
\beq\label{CCHL2-2}
{\rm length}(\{u(t)\})\le 2\pi+\ep.
\eneq

One can also  require that
\beq\label{CCHL2-3}
{\rm dist}(u(t), CU(A))<\ep
\eneq
with ${\rm cel}(\{u(t)\})\le 4\pi+\ep.$
%Furthermore,  one can further require that,
%for some $t_0\in (0,1),$
%$u(t)\in CU(A)$ for all $t\in [t_0, 1]$ and
%\beq\label{HL2-%}
%\|u(t)-u\|<\ep\tforal t\in [0,t_0]
%\eneq
%with ${\rm cel}(\{u(t)\})\le 6\pi+\ep.$

% If, in addition, $u\in DU(A),$ one may further require that
%$u(t)\in CU(A)$ for all $t\in [0,1].$

\end{cor}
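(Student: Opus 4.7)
The plan is to reduce directly to Corollary \ref{CHL3} by using Proposition \ref{positive} to translate the measure hypothesis $\mu_{\tau\circ\phi}(O_a)\geq \nabla(a)$ into the trace-on-positive-elements hypothesis $\tau\circ\phi(h)\ge \overline{\nabla}(\hat h)$. The only real content of the proof is bookkeeping: choosing the parameters and the finite subset $\mathcal{H}$ in a compatible order. The subtlety is that in Proposition \ref{positive}, the non-decreasing function $\nabla$ is produced first from $\overline{\nabla}$ alone, while $\mathcal H$ is produced only after one fixes a threshold $\eta>0$; and the $\eta$ we need is precisely the one supplied by Corollary \ref{CHL3}.

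First, given the order-preserving map $\overline{\nabla}\colon C_+^{q,\mathbf{1}}\setminus\{0\}\to (0,1)$, I invoke Proposition \ref{positive} to obtain a non-decreasing function $\nabla\colon (0,1)\to(0,1)$. Next, given $\ep>0$ and the finite subset $\mathcal{F}\subset C$, I apply Corollary \ref{CHL3} with this $\nabla$ (and with the given $\ep$ and $\mathcal{F}$) to produce constants $\delta>0$, $\eta>0$, $\gamma>0$ together with finite subsets $\mathcal{G}\subset C$, $\mathcal{P}\subset \underline{K}(C)$ and $\mathcal{Q}=\{x_1,\ldots,x_m\}\subset K_0(C)$, where $\mathcal Q$ generates a free subgroup and $x_i=[p_i]-[q_i]$ for projections $p_i,q_i\in M_n(C)$. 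With this $\eta$ now in hand, I apply the second clause of Proposition \ref{positive} to obtain the required finite subset $\mathcal{H}\subset C_+^{\mathbf{1}}\setminus\{0\}$.

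To verify that this choice works, suppose $A$, $\phi\colon C\to A$ and the unitary $u\in A$ satisfy the hypotheses of the statement: the almost-commutation $\|[\phi(c),u]\|<\delta$ for $c\in \mathcal{G}$, the vanishing $\mathrm{Bott}(\phi,u)|_{\mathcal{P}}=0$, the trace estimate $\tau\circ\phi(h)\ge \overline{\nabla}(\hat h)$ for all $h\in \mathcal{H}$ and $\tau\in T(A)$, and the proximity condition (\ref{CCHL2-1}) on the Bott images of the generators in $\mathcal{Q}$ relative to $CU(M_n(A))$. By the conclusion of Proposition \ref{positive}, the trace estimate on $\mathcal{H}$ upgrades to $\mu_{\tau\circ\phi}(O_a)\ge \nabla(a)$ for every open ball $O_a\subset X$ of radius $a\ge \eta$. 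All the hypotheses of Corollary \ref{CHL3} are now in place, and the conclusion, including both the length bound $2\pi+\ep$ and the $CU(A)$-approximation with $\mathrm{cel}$ bound $4\pi+\ep$, is exactly the one to be carried over.

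There is no genuine obstacle beyond this bookkeeping: Proposition \ref{positive} is explicitly designed to allow this ordering of quantifiers, and Corollary \ref{CHL3} already carries the length and $CU(A)$-proximity bounds from Theorem \ref{HL2}. If anything, the step that most warrants care is making sure that the $\eta$ output by \ref{CHL3} is fed back into \ref{positive} before $\mathcal{H}$ is declared, rather than treating $\mathcal H$ as universal.
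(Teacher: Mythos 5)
Your proposal is correct and follows the paper's own route: the paper states Corollary \ref{CHL2} precisely as a restatement of Corollary \ref{CHL3} via Proposition \ref{positive}, and your ordering of the quantifiers (obtain $\nabla$ from $\overline{\nabla}$ first, run \ref{CHL3} to get $\eta$ and the other data, then feed $\eta$ back into \ref{positive} to produce ${\cal H}$) is exactly the intended bookkeeping.
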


The following follows immediately from \ref{CHL2}.

\begin{thm}\label{MHLL}
Let $C$ be a unital $AH$-algebra
and let $\nabla: C^{q,{\bf 1}}_+\setminus \{0\} \to (0, 1)$ be an order preserving map. For any $\ep>0$ and any finite subset $\mathcal F \subseteq C$, there exists $\delta>0$,  $\gamma>0$,  finite subsets $\mathcal G \subseteq C$, $\mathcal P\subseteq \underline{K}(C)$,  a finite
 subset ${\cal Q}=\{x_1, x_2,...,x_m\}\subset K_0(C)$ which generates a free
 subgroup and $x_i=[p_i]-[q_i],$ where $p_i, q_i\in M_n(C)$
 (for some integer $n\ge 1$) are projections, and a finite subset
 ${\cal H}\subset C^{\bf 1}_+\subset \{0\}$
 satisfying the following:
Suppose that $A$  is a unital simple C*-algebra with $TR(A)\leq 1$, $\phi: C\to A$ is a unital homomorphism and $u\in A$ is a unitary, and suppose that
$$
\|[\phi(c), u]\|<\delta,\ \forall c\in {\mathcal G} \tand\,   \mathrm{Bott}(\phi, u)|_{\mathcal P}=0,
$$
\beq\label{MHLL-n}
\tau\circ \phi(h)\ge \nabla(\hat{h})\tforal  \tau\in T(A)
\eneq
and all $h\in {\cal H},$ and,
  for each $1\leq i\leq m$,
there is $v_i\in CU(M_n(A))$ such that
\beq\label{MHL2-1}
\|\langle(\mathbf 1_n-\phi(p_i)+\phi(p_i)({\bf 1}_n\otimes {u}))
(\mathbf 1_n-\phi(q_i)+\phi(q_i)({\bf 1}_n\otimes { u}^*))\rangle-v_i\|<\gamma.
\eneq
Then there is a continuous path of unitaries $\{u(t): t\in[0, 1]\}$ in $A$ such that
$$u(0)=u, u(1)=1 \tand\,  \|[\phi(c), u(t)]\|<\ep$$
for any $c\in\mathcal F$ and for any $t\in[0, 1]$.

Moreover,
\beq\label{MHL2-2}
{\rm length}(\{u(t)\})\le 2\pi+\ep.
\eneq

One can also  require that
\beq\label{MHL2-3}
{\rm dist}(u(t), CU(A))<\ep \rforal t\in [0,1]
\eneq
with ${\rm length}(\{u(t)\})\le 4\pi+\ep.$
%Furthermore,  one can further require that,
%for some $t_0\in (0,1),$
%$u(t)\in CU(A)$ for all $t\in [t_0, 1]$ and
%\beq\label{HL2-%}
%\|u(t)-u\|<\ep\tforal t\in [0,t_0]
%\eneq
%with ${\rm cel}(\{u(t)\})\le 6\pi+\ep.$

% If, in addition, $u\in DU(A),$ one may further require that
%$u(t)\in CU(A)$ for all $t\in [0,1].$

\end{thm}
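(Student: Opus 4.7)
The plan is to deduce the AH-algebra case from the single-summand case of Corollary \ref{CHL2} via a standard inductive-limit approximation. Write $C = \varinjlim (C_n, \psi_n)$ with unital connecting maps, where
\[
C_n = \bigoplus_{j=1}^{k_n} P_{n,j}\, M_{r_{n,j}}(C(X_{n,j}))\, P_{n,j}
\]
and each $X_{n,j}$ is a compact subset of a finite CW complex; let $\psi_{n,\infty}\colon C_n \to C$ denote the canonical unital homomorphisms. Given $\ep > 0$ and a finite $\mathcal{F} \subset C$, first pick $n$ large enough and a finite $\mathcal{F}' \subset C_n$ so that every $f \in \mathcal{F}$ lies within $\ep/8$ of $\psi_{n,\infty}(\mathcal{F}')$.

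Next, apply Corollary \ref{CHL2} separately to each direct summand of $C_n$ with tolerance $\ep/4$ and the corresponding piece of $\mathcal{F}'$, and combine the outputs to obtain $\delta' > 0$, $\gamma' > 0$, and finite sets $\mathcal{G}' \subset C_n$, $\mathcal{P}' \subset \underline{K}(C_n)$, $\mathcal{Q}' = \{x_1', \ldots, x_m'\} \subset K_0(C_n)$ (with $x_i' = [p_i'] - [q_i']$), and $\mathcal{H}' \subset (C_n)_+^{\bf 1} \setminus \{0\}$. Transport these to $C$ by setting $\delta := \delta'$, $\gamma := \gamma'$, $\mathcal{G} := \psi_{n,\infty}(\mathcal{G}')$, $\mathcal{P} := [\psi_{n,\infty}](\mathcal{P}')$, $\mathcal{Q} := \{[\psi_{n,\infty}(p_i')] - [\psi_{n,\infty}(q_i')]\}$, and $\mathcal{H} := \psi_{n,\infty}(\mathcal{H}')$. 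Now suppose $\phi\colon C \to A$ and $u \in A$ satisfy the hypothesis of Theorem \ref{MHLL} with these parameters. Then $\phi_n := \phi \circ \psi_{n,\infty}\colon C_n \to A$ together with $u$ satisfies the hypothesis of Corollary \ref{CHL2} for $C_n$: the commutator and Bott conditions translate by functoriality of $\mathrm{Bott}$, the trace inequality is exactly the pulled-back form of (\ref{MHLL-n}), and condition (\ref{MHL2-1}) is defined through the push-forward projections. Corollary \ref{CHL2} then yields a path $\{u(t)\}$ with $u(0)=u$, $u(1)=1$, $\|[\phi_n(c), u(t)]\| < \ep/4$ for $c \in \mathcal{F}'$, of length at most $2\pi + \ep$ (respectively at most $4\pi + \ep$ with $\dist(u(t), CU(A)) < \ep$). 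For each $f \in \mathcal{F}$, choose $f' \in \mathcal{F}'$ with $\|f - \psi_{n,\infty}(f')\| < \ep/8$; then
\[
\|[\phi(f), u(t)]\| \le \|[\phi_n(f'), u(t)]\| + 2\|\phi(f) - \phi_n(f')\| < \ep/4 + \ep/4 < \ep,
\]
and the length bounds (\ref{MHL2-2})--(\ref{MHL2-3}) are inherited verbatim.

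The main obstacle is ensuring that $\mathcal{Q}$ generates a \emph{free} subgroup of $K_0(C)$: even if $\mathcal{Q}'$ is free in $K_0(C_n)$, the map $K_0(\psi_{n,\infty})$ may have nontrivial kernel on $\langle \mathcal{Q}' \rangle$. Since $K_0(C) = \varinjlim K_0(C_n)$, any finitely generated subgroup becomes mapped injectively after passing to a sufficiently large stage; one therefore replaces $n$ by such a stage after $\mathcal{Q}'$ has been chosen. Alternatively, one may fix the desired free finite $\mathcal{Q} \subset K_0(C)$ in the torsion-free part with explicit projection representatives $p_i, q_i \in M_n(C)$, and select $n$ large enough for these representatives to lift to $C_n$, designing the remaining data $\mathcal{G}', \mathcal{P}', \mathcal{H}'$ at that stage. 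A minor secondary check is that each $h \in \mathcal{H}'$ has nonzero image in $C$, so that $\nabla(\widehat{\psi_{n,\infty}(h)})$ makes literal sense; this is arranged by starting from an $\mathcal{H} \subset C_+^{\bf 1} \setminus \{0\}$ and lifting at a stage where the relevant elements are strictly positive. With these standard adjustments the inductive-limit reduction is complete.
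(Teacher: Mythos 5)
Your overall reduction is exactly the route the paper has in mind (its proof of Theorem \ref{MHLL} is just the remark that it follows from Corollary \ref{CHL2}), and the verification step is sound in outline: with $\phi_n=\phi\circ\psi_{n,\infty}$ the commutator, Bott, trace and $CU$-hypotheses pass from $C$ to the building block, and the path, its length bounds and the $CU$-distance estimate come back unchanged, with the $\ep/8$--$\ep/4$ bookkeeping you give. Two smaller points should be repaired, though. First, you should not invoke \ref{CHL2} summand by summand: the restriction of $\phi_n$ to a direct summand of $C_n$ is not a unital homomorphism into $A$, so the corollary does not apply to it as stated; instead observe that $\bigoplus_j P_jM_{r_j}(C(X_j))P_j$ is itself of the form $PM_r(C(X))P$ with $X$ the disjoint union of the $X_j$, again a compact subset of a finite CW complex, so \ref{CHL2} applies to $C_n$ in one stroke. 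Second, your fix for the requirement $\mathcal H\subset C^{\bf 1}_+\setminus\{0\}$ has the quantifiers backwards: $\mathcal H'$ is an \emph{output} of \ref{CHL2}, so you cannot ``start from an $\mathcal H$'' and lift it. The clean remedy is to replace $C_n$ by its image $\psi_{n,\infty}(C_n)\subseteq C$, which is again a direct sum of corners of matrix algebras over compact subsets of finite CW complexes (this is precisely why \ref{CHL2} is stated for compact subsets rather than finite CW complexes); then all data live in a unital subalgebra of $C$, no positive element is killed, and $\overline\nabla$ for the block is literally the restriction of $\nabla$.

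The genuine gap is your treatment of the freeness of $\mathcal Q$. Passing to a later stage $m$ only ensures that the subgroup of $K_0(C_m)$ generated by the image of $\mathcal Q'$ maps injectively into $K_0(C)$; it does not change the image subgroup of $K_0(C)$ itself, which is a quotient of $\langle\mathcal Q'\rangle$ and may acquire torsion in the limit, so the pushed-forward set can still fail to generate a free subgroup of $K_0(C)$ no matter which stage you use (and the image-algebra trick does not help here either, since $K_0(\psi_{n,\infty}(C_n))\to K_0(C)$ need not be injective). Your ``alternative'' of fixing a free $\mathcal Q\subset K_0(C)$ in advance severs the link with \ref{CHL2}: the hypothesis (\ref{MHL2-1}) you are then allowed to assume concerns your chosen $p_i,q_i$ and no longer yields the condition (\ref{CCHL2-1}) for the projections $p_i',q_i'$ that \ref{CHL2} actually requires. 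Note that freeness is never used in the hypothesis-verification direction (there the $CU$-condition transfers verbatim because $\phi_n(p_i')=\phi(\psi_{n,\infty}(p_i'))$); it is a constraint on the data the theorem must exhibit, and your argument does not secure it. A correct repair needs an extra step, for instance: keep only those pushed-forward generators spanning a free subgroup, and show that for a generator whose image in $K_0(C)$ satisfies a torsion relation the condition (\ref{MHL2-1}) is automatic up to the stated tolerance, using multiplicativity of $u\mapsto\langle(1-\phi(p)+\phi(p)u)\cdots\rangle$ under direct sums together with the fact that for $TR(A)\le 1$ the group $U_0(A)/CU(A)\cong\Aff(T(A))/\overline{\rho_A(K_0(A))}$ is divisible and torsion free, so that smallness of ${\rm dist}(k\bar z, \bar 0)$ forces smallness of ${\rm dist}(\bar z,\bar 0)$. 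As written, the proposal does not close this point.
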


\begin{lem}\label{TR=meas}
Let $A$ be a unital separable simple amenable \CA\, with $TR(A)\le 1.$
There is an order preserving  map $\nabla: A^{q,{\bf 1}}_+\setminus \{0\}\to (0,1)$ with $\lim_{\|\hat{h}\|\to 0}\nabla(\hat{h})=0$
satisfying the following: For any unital \hm\, $\phi: A\to B,$ where
$B$ is a unital separable \CA\, with $T(B)\not=\emptyset,$ one has
$$
\tau\circ \phi(h)\ge \nabla(\hat{h})\tforal \tau\in T(A)
$$
and for all $h\in A_+\setminus \{0\}.$
\end{lem}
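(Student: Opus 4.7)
The plan is to define $\nabla$ as (half of) the infimum of the affine function $\hat{h}$ over $T(A)$. Specifically, I set
$$\nabla(\hat{h}):=\tfrac{1}{2}\inf_{\tau\in T(A)}\hat{h}(\tau).$$
Since $\hat{h}(\tau)=\tau(h)$ depends only on the image $\hat{h}$ and not on any particular representative $h\in A^{\bf 1}_+$, this formula yields a well-defined function on $A^{q,{\bf 1}}_+\setminus\{0\}$.

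The verification of the required properties rests on two standard facts. First, since $A$ is unital, $T(A)$ is weak-$\ast$ compact, and since $TR(A)\le 1$ we also have $T(A)\ne\emptyset$. Second, every tracial state $\tau$ on a simple unital \CA\, is faithful: the left ideal $N_\tau=\{x\in A:\tau(x^*x)=0\}$ is selfadjoint (because $\tau(x^*x)=\tau(xx^*)$ for a trace), hence a closed two-sided ideal of $A$; as $\tau(1_A)=1$, simplicity forces $N_\tau=\{0\}$, so $\tau(h)>0$ for every $h\in A_+\setminus\{0\}$. Consequently, whenever $\hat{h}\in A^{q,{\bf 1}}_+\setminus\{0\}$ is represented by some $h\in A^{\bf 1}_+\setminus\{0\}$, the continuous function $\tau\mapsto\tau(h)$ is strictly positive on the compact set $T(A)$, and its infimum is therefore attained and strictly positive. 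Together with $\inf_\tau \tau(h)\le\|\hat h\|\le 1$, this gives $\nabla(\hat h)\in(0,1/2]\subset(0,1)$. Order-preservation is immediate from $\hat{h}_1\le\hat{h}_2\Rightarrow\inf\hat{h}_1\le\inf\hat{h}_2$, and the estimate $\nabla(\hat h)\le\|\hat h\|/2$ delivers $\lim_{\|\hat h\|\to 0}\nabla(\hat h)=0$.

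For the main inequality (in which the universal quantifier should most naturally be read as $\tau\in T(B)$), let $\phi\colon A\to B$ be a unital homomorphism and let $\tau\in T(B)$. Then $\tau\circ\phi$ is unital (since $\phi(1_A)=1_B$), positive and tracial on $A$, hence $\tau\circ\phi\in T(A)$, and therefore
$$\tau\circ\phi(h)\ge\inf_{\tau'\in T(A)}\tau'(h)=2\nabla(\hat h)\ge\nabla(\hat h).$$
No serious obstacle arises; the argument is just the compactness of $T(A)$ combined with the standard faithfulness of traces on simple unital \CA s, and the finite-tracial-rank hypothesis is invoked only to secure $T(A)\ne\emptyset$.
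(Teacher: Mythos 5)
Your proof is correct and follows essentially the same route as the paper: define $\nabla$ via $\inf_{\tau\in T(A)}\tau(h)$, use simplicity (faithfulness of traces, compactness of $T(A)$) for strict positivity, and note that $\tau\circ\phi\in T(A)$ for $\tau\in T(B)$. Your extra factor $\tfrac12$ and the remark that the quantifier should read $\tau\in T(B)$ are harmless refinements of the paper's argument.
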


\begin{proof}
Let $h\in A_+^{\bf 1}\setminus \{0\}.$ Define
$$
\nabla(\hat{h})=\inf_{\tau\in T(A)} \tau(h).
$$
Then, since $A$ is simple, $\nabla(\hat{h})>0.$ Since
$\nabla(\hat{h})\le \|\hat{h}\|,$
$$
\lim_{\|\hat{h}\|\to 0}\nabla(\hat(h))=0.
$$
Now let $\phi: A\to B$ be any unital \hm, $\phi: A\to B.$
If $\tau\in T(B),$ the $\tau\circ \phi\in T(A).$ It follows that
$$
\tau\circ \phi(h)\ge \nabla(\hat{h})\tforal \tau\in T(B).
$$

\end{proof}

\begin{thm}\label{MHLT}
Let $C$ be a unital separable simple amenable \CA\, with $TR(C)\le 1$ which satisfies the UCT.
For any $\ep>0$ and any finite subset $\mathcal F \subseteq C$, there exists $\delta>0$,  $\gamma>0$,  finite subsets $\mathcal G \subseteq C$, $\mathcal P\subseteq \underline{K}(C)$,  a finite
 subset of projections $p_1, p_2,...,p_m\in C$
  satisfying the following:

Suppose that $A$  is a unital simple C*-algebra with $TR(A)\leq 1$, $\phi: C\to A$ is a unital homomorphism and $u\in A$ is a unitary, and suppose that
$$
\|[\phi(c), u]\|<\delta,\ \forall c\in\mathcal G\quad\mathrm{and}\quad \mathrm{Bott}(\phi, u)|_{\mathcal P}=0,
$$
and
 for each $1\leq i\leq m$,
there is $v_i\in CU(A)$ such that
\beq\label{MHL2-1+}
\|\langle (1-\phi(p_i))+\phi(p_i)u\rangle -v_i\|<\gamma,\,\,\, i=1,2,...,m.
%\|\langle(\mathbf 1_n-\phi(p_i)+\phi(p_i){1}_n\otimes u)
%(\mathbf 1_n-\phi(q_i)+\phi(q_i)({1}_n\otimes u^*\rangle-v_i\|<\gamma.
\eneq
Then there is a continuous path of unitaries $\{u(t): t\in[0, 1]\}$ in $A$ such that
$$u(0)=u, u(1)=1,\ \textrm{and}\ \|[\phi(c), u(t)]\|<\ep$$
for any $c\in\mathcal F$ and for any $t\in[0, 1]$.

Moreover,
\beq\label{MHL2-2+}
{\rm length}(\{u(t)\})\le 2\pi+\ep.
\eneq

One can also  require that
\beq\label{MHL2-3+}
{\rm dist}(u(t), CU(A))<\ep\tforal t\in [0,1]
\eneq
with ${\rm cel}(\{u(t)\})\le 4\pi+\ep.$
%Furthermore,  one can further require that,
%for some $t_0\in (0,1),$
%$u(t)\in CU(A)$ for all $t\in [t_0, 1]$ and
%\beq\label{HL2-%}
%\|u(t)-u\|<\ep\tforal t\in [0,t_0]
%\eneq
%with ${\rm cel}(\{u(t)\})\le 6\pi+\ep.$

% If, in addition, $u\in DU(A),$ one may further require that
%$u(t)\in CU(A)$ for all $t\in [0,1].$

\end{thm}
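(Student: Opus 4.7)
The plan is to deduce this from Theorem \ref{MHLL} combined with Lemma \ref{TR=meas} and the classification of simple \CAs\, with $TR\le 1$. Since $C$ is unital separable simple amenable with $TR(C)\le 1$ and satisfies the UCT, by \cite{LnlocAH} $C$ is isomorphic to a unital simple AH-algebra with slow dimension growth, so \ref{MHLL} applies to $C$. Moreover $C$ has stable rank one and $K_0(C)$ is a simple weakly unperforated ordered group with order unit $[1_C]$, so by Blackadar's existence theorem every $K_0$-class $y$ satisfying $0<\rho_C(y)<\rho_C([1_C])$ strictly on every $\tau\in T(C)$ is represented by a projection in $C$.

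First I would use Lemma \ref{TR=meas} to manufacture an intrinsic order-preserving map $\nabla_C$ on $C^{q,\mathbf 1}_+\setminus\{0\}$ which is automatically satisfied by any unital homomorphism $\phi: C\to A$ into a unital simple $A$ with $T(A)\ne\emptyset$. Applying \ref{MHLL} to $C$ with this $\nabla_C$ yields $\delta_0,\gamma_0$, finite subsets $\mathcal G_0\subset C$, $\mathcal P_0\subset\underline K(C)$, and a free-generating subset $\mathcal Q_0=\{x_1,\dots,x_k\}\subset K_0(C)$ with each $x_j=[p_j']-[q_j']$ for $p_j',q_j'\in M_n(C)$; the measure hypothesis (\ref{MHLL-n}) then evaporates. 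To pass from $\mathcal Q_0$ to a finite set of projections $\{p_1,\dots,p_m\}\subset C$ in the conclusion, I decompose each $p_j'$ and $q_j'$ into orthogonal sub-projections whose $K_0$-classes are small enough in trace to be represented by projections in $C$ (using stable rank one and Blackadar's theorem). This produces an expression $x_j=\sum_i n_{j,i}[p_i]$ as an integer linear combination of classes of projections $p_i\in C$; collect these projections, together with $p_0:=1_C$, into the finite set $\{p_1,\dots,p_m\}$.

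It remains to verify that hypothesis (\ref{MHL2-1+}) on each $p_i$ implies hypothesis (\ref{MHL2-1}) of \ref{MHLL} on each $x_j$. The multiplicativity of the Bott construction modulo $CU$, encoded in equations (\ref{Bottplus-1+2}) and (\ref{Bottplus-4}) of \ref{Klbar} and \ref{KBottmap}, together with the approximate commutation of $u$ with $\phi(p_i)$, shows that in $U(A)/CU(A)$ the Bott-type unitary attached to $x_j$ is, to controllable accuracy, the product $\prod_i\bigl(\langle(1-\phi(p_i))+\phi(p_i)u\rangle\bigr)^{n_{j,i}}$. Hence if every factor lies within $\gamma$ of $CU(A)$ then so does the product, provided $\gamma$ is chosen small enough in terms of $\gamma_0$, $m$, $n$ and $\max_{j,i}|n_{j,i}|$. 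The isomorphism $U_0(A)/CU(A)\cong U_0(M_n(A))/CU(M_n(A))$ recorded in \ref{dcu} (valid when $TR(A)\le 1$) then transports this $CU(A)$-closeness to the $CU(M_n(A))$-closeness required by (\ref{MHL2-1}). The main obstacle is the careful quantitative bookkeeping: choosing $\delta,\gamma,\mathcal G,\mathcal P$ tightly enough that all Bott expressions are well-defined, that multiplicativity holds to the required precision, and that the $M_n$-isomorphism preserves the estimates uniformly across the finite set. Once this reduction is carried out, the length conclusions (\ref{MHL2-2+}) and (\ref{MHL2-3+}) are inherited directly from the corresponding estimates (\ref{MHL2-2}) and (\ref{MHL2-3}) of \ref{MHLL}.
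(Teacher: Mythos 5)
Your core route is exactly the paper's: the paper's proof of \ref{MHLT} consists of the observation that, by the classification theorem, $C$ is a unital simple AH-algebra with no dimension growth, so \ref{MHLL} applies, and that by \ref{TR=meas} the trace condition (\ref{MHLL-n}) is automatic for any unital homomorphism into a unital simple $A$ with $T(A)\neq\emptyset$. The paper stops there and silently identifies the hypothesis of \ref{MHLT} (projections $p_i\in C$, $v_i\in CU(A)$) with the hypothesis of \ref{MHLL} (elements $x_j=[p_j']-[q_j']$ with $p_j',q_j'\in M_n(C)$ and $v_i\in CU(M_n(A))$); your second and third paragraphs supply this translation explicitly, and in outline it is the right argument: approximate multiplicativity of $p\mapsto\overline{\langle 1-\phi(p)+\phi(p)u\rangle}$ over orthogonal sums and Murray--von Neumann equivalences (with the implementing partial isometries placed in $\mathcal G$ so that $u$ almost commutes with their images), together with the fact that a commutator in $U(A)$ sits diagonally as a commutator in $U(M_n(A))$, so $CU(A)$-closeness passes to $CU(M_n(A))$-closeness.

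The one step whose stated justification is insufficient is the decomposition of each $p_j',q_j'\in M_n(C)$ into orthogonal subprojections of pointwise trace $<1$ ``using stable rank one and Blackadar's theorem.'' Cancellation plus the fact that any class $y$ with $0<\rho_C(y)<\rho_C([1_C])$ is represented by a projection in $C$ do not by themselves produce such a decomposition: they presuppose that the corner $p_j'M_n(C)p_j'$ contains projections of small trace, which does not follow from stable rank one and weak unperforation alone (the Jiang--Su algebra has both and has no nontrivial projections). Here you must use $TR(C)\le 1$ more fully: apply Lemma \ref{appdiv} (tracial approximate divisibility) inside the corner $p_j'M_n(C)p_j'$, which is again unital simple with $TR\le 1$, to get $B\cong M_N$ with $1_B=p\le p_j'$ and $\tau(p_j'-p)$ small; the $N$ minimal projections of $B$ together with $p_j'-p$ give the desired small-trace orthogonal decomposition, and then strict comparison (4.7 of \cite{Lntr=1}) plus cancellation represents each piece by a projection in $C$. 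With that substitution, and with the finitely many witnesses of the resulting identities $x_j=\sum_i n_{j,i}[p_i]$ absorbed into $\mathcal G$ as part of the ``bookkeeping'' you describe, your reduction, and hence the theorem, goes through; note also that the paper credits the classification statement to \cite{Lntr=1} rather than \cite{LnlocAH}, a harmless attribution slip.
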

\begin{proof}
This follows from \ref{MHLL}.
In fact, it follows from the classification result in \cite{Lntr=1} that $C$ is isomorphic to
a unital simple AH-algebra with no dimension growth.   Note $C$ is simple.
So \ref{TR=meas} applies. In other words, $\nabla$ can be given and
(\ref{MHLL-n}) holds for this $\nabla.$  So
\ref{MHLL} applies.
\end{proof}

\begin{rem}\label{Rem1}

{\rm
Note that all paths $\{u(t)\}, \{w(t)\}$ in this section  can be made not only continuous but also piecewise
smooth. Furthermore, if ${\rm length}(\{u(t): t\in [0,1]\})\le C,$ then we can also assume that
$$
\|u(t)-u(t')\|\le C|t-t'|\tforal t, t'\in [0,1].
$$

}

\end{rem}

The \CA\, $C$ below is a unital AH-algebra  with no dimension growth which has stable rank one by the classification
theorem. Thus the next follows immediately from Theorem 3.13 in \cite{LN2}.

\begin{thm}\label{Ext1}
Let $C$ be a unital separable amenable simple \CA\, with $TR(C)\le 1$ which satisfies the UCT and let
 $G\subset K_0(C)$ be a finitely generated subgroup generated by projections $p_1,p_2,...,p_n\subset C.$
Let $A$ be a unital separable simple \CA\, with $TR(A)\le 1.$
Suppose $\phi: C\to A$ is a unital \hm. Then, for any $\ep>0,$ any $\gamma>0,$ any finite subset ${\cal F}\subset C,$ any finite subset ${\cal P}\subset \underline{K}(C),$ \, and any \hm\, $\Gamma: G\to U_0(A)/CU(A),$ there is a unitary $u\in U(A)$ such that
\beq\label{Ext1-1}
\|[\phi(x),\, u]\|<\ep\tforal x\in {\cal F},\,\, {\rm Bott}(\phi,\, u)|_{\cal P}=0\tand\\
{\rm dist}(\langle (1-\phi(p_i))+\phi(p_i)u\rangle,\, \Gamma([p_i]))<\gamma,
\,\,\, 1\le i\le n.
\eneq
\end{thm}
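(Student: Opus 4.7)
The plan is to derive this theorem directly from its AH-algebra counterpart, namely Theorem 3.13 of \cite{LN2}, once one verifies that $C$ satisfies the hypotheses of that theorem. Essentially all the technical work has already been done in \cite{LN2}; here the only new ingredient is classification.

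First I would invoke the classification theorem of \cite{Lntr=1}: every unital separable simple amenable \CA\ with tracial rank at most one satisfying the UCT is isomorphic to a unital simple AH-algebra with no dimension growth. Such AH-algebras are known to have stable rank one. Therefore $C$ falls within the class of algebras to which Theorem 3.13 of \cite{LN2} applies.

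Next I would transport the given data $(\phi,{\cal F},{\cal P},p_1,\ldots,p_n,\Gamma,\ep,\gamma)$ across an isomorphism identifying $C$ with a simple AH-algebra of no dimension growth, and apply Theorem 3.13 of \cite{LN2} to produce a unitary $u\in U(A)$ enjoying the three required properties: the approximate commutation $\|[\phi(x),u]\|<\ep$ for $x\in{\cal F}$, the vanishing ${\rm Bott}(\phi,u)|_{\cal P}=0$, and the approximate matching ${\rm dist}(\langle(1-\phi(p_i))+\phi(p_i)u\rangle,\Gamma([p_i]))<\gamma$ for $1\le i\le n$. The homomorphism $\Gamma:G\to U_0(A)/CU(A)$ is handed to the theorem unchanged, since $G$ is generated by the projections $p_1,\ldots,p_n$ and these survive any isomorphism of $C$ with its AH-model.

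The main (essentially only) subtlety is referential: one must confirm that the input data $({\cal F},{\cal P},p_1,\ldots,p_n,\Gamma)$ and the target algebra $A$ do not depend on a particular AH-presentation of $C$, so that the conclusion of Theorem 3.13 of \cite{LN2} passes back through the isomorphism to yield the stated result for $C$ itself. This invariance is clear by inspection, so the reduction is immediate, and no additional construction is needed beyond citing \cite{Lntr=1} and \cite{LN2}.
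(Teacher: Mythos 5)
Your proposal is correct and matches the paper's own treatment: the paper likewise notes that, by the classification theorem of \cite{Lntr=1}, $C$ is a unital simple AH-algebra with no dimension growth (hence of stable rank one), and then deduces the statement immediately from Theorem 3.13 of \cite{LN2}. No further comment is needed.
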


\section{Kishimoto's conjugacy theorem}

The following serves as an alternative for  Kishimoto's stability theorem (3.4 of \cite{K1}).
The proof uses the special homotopy lemmas in the previous section and
a construction due to  Kishimoto (see also \cite{HO}).

\begin{lem}\label{KishL1}
Let $A$ be a unital separable amenable \CA\, with $TR(A)\le 1$ which satisfies the UCT and let $\af\in \Aut(A)$ be an automorphism with the Rokhlin property.
For any $\ep>0$ and any finite subset ${\cal F}\subset A,$ there exists
$\dt>0$ and $\lambda>0,$  a finite subset ${\cal G}\subset A,$  a finite subset ${\cal P}\subset \underline{K}(A),$ and a finite subset
of projections $\{p_1,p_2,...,p_m\}\subset A$ satisfying the following:
If $u\in U(A)$ such that
\beq\label{KIshL1-1}
\|[a,\, u]\|<\dt \tforal a\in {\cal G},\,\,\, {\rm Bott}({\rm id}_A, \, u)|_{{\cal P}}=0\tand\\
{\rm dist}(\langle (1-p_i)+p_iu\rangle , CU(A))<\lambda,\,\,\, i=1,2,...,m,
\eneq
then there is a continuous path of unitaries  $\{V(t): t\in [0,1]\}\subset  A$ such that $V(1)=1,$
\beq\label{KIshL1-2}
\|u-V(0)\af(V(0)^*)\|<\ep\tand \|[b,\, V(t)]\|<\ep\tforal b\in {\cal F}
\eneq
and for all $t\in [0,1],$  and
\beq\label{KIshL1-2+}
\|V(t)-V(t')\|\le (4\pi+1)|t-t'|\tforal t,t'\in [0,1].
\eneq

Moreover, a version for  ${\cal F}=\emptyset$ holds as follows:
For any $\ep>0$ and $u\in CU(A),$ there exists  a continuous path of unitaries $\{U(t): t\in  [0,1]\}\subset  U(A)$ such that $U(1)=1,$
\beq\label{KIshL1-3}
\|u-U(0)\af(U(0)^*)\|<\ep\andeqn \|U(t)-U(t')\|\le (4\pi+1)|t-t'|
\eneq
for all $t,\, t'\in [0,1].$

\end{lem}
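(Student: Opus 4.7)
The plan is to combine Kishimoto's cocycle construction (as in \cite{K1} and \cite{HO}) with the controlled-length homotopy Theorem \ref{MHLT}. Given $\ep$ and ${\cal F}$, choose $\ep_1 \ll \ep$ and a finite subset ${\cal F}_1 \supset {\cal F}$ enlarged to absorb Rokhlin-tower commutators, and apply \ref{MHLT} with $C=A$, $\phi={\rm id}_A$, input $(\ep_1, {\cal F}_1)$, to obtain parameters $\dt_1, \gamma_1, {\cal G}_1, {\cal P}_1$ and projections $p_1,\ldots,p_m$. These become the output data of the present lemma: $\dt=\dt_1$, $\ld=\gamma_1$, ${\cal G}={\cal G}_1$, ${\cal P}={\cal P}_1$.

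The hypotheses (\ref{KIshL1-1}) on $u$ are precisely those of \ref{MHLT}, so its conclusion produces a continuous path $\{u(t):t\in[0,1]\}\subset U(A)$ with $u(0)=u$, $u(1)=1$, $\|[a,u(t)]\|<\ep_1$ for all $a\in {\cal F}_1$ and all $t$, and ${\rm length}\{u(t)\}\le 4\pi+\ep_1$; by \ref{Rem1} we may take $t\mapsto u(t)$ Lipschitz with constant $4\pi+\ep_1$. Sample the path at points $t_j=j/(k-1)$ for a large $k$, and invoke the Rokhlin property of $\af$ to produce two towers $\{e_{1,j}\}_{j=0}^{k-1}\cup\{e_{2,j}\}_{j=0}^{k}$ satisfying (\ref{drk1}) almost commuting with ${\cal F}\cup\{u(t_j)\}_j$ and approximately permuted by $\af$.

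Kishimoto's construction now produces $V$: define $v_{i,j}(t)=u(t)\af(u(t))\cdots\af^{j-1}(u(t))$ (with $v_{i,0}(t)=1$) and set
$$
V(t)=\sum_{j=0}^{k-1} v_{1,j}(t)\,e_{1,j}+\sum_{j=0}^{k} v_{2,j}(t)\,e_{2,j}.
$$
The cocycle identity $u(t)\cdot\af(v_{i,j-1}(t))=v_{i,j}(t)$, combined with the Rokhlin relation $\af(e_{i,j})\approx e_{i,j+1}$, gives $V(0)\af(V(0)^*)\approx u$ inside the sum, while $V(1)=\sum_{i,j} e_{i,j}=1_A$. Boundary contributions at $j=0$ and at the top of each tower are controlled via (\ref{drk-2}) using the two-tower structure of the Rokhlin property. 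The commutation $\|[b,V(t)]\|<\ep$ for $b\in{\cal F}$ follows because the $e_{i,j}$ almost commute with ${\cal F}$ and each factor $v_{i,j}(t)\,e_{i,j}$, up to small error, lies in the local corner determined by $e_{i,j}$ via repeated use of the Rokhlin estimates.

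The main obstacle is the Lipschitz bound $(4\pi+1)|t-t'|$: a naive differentiation of the cocycle product $v_{i,j}(t)$ yields an $O(k)$-factor from the product rule, which is much too large. This is handled by a staggered reparametrization of $u(t)$—partition $[0,1]$ into sub-intervals on which only a single factor $\af^i(u(\cdot))$ in a single $v_{i,j}(\cdot)$ varies. Since the $e_{i,j}$ are pairwise orthogonal, the instantaneous speed of $V$ on any such sub-interval equals the speed of one $u(\cdot)$-factor, giving $\|\dot V(t)\|\le 4\pi+\ep_1<4\pi+1$ for $\ep_1<1$. For the ``moreover'' part, when ${\cal F}=\emptyset$ and $u\in CU(A)$ the hypotheses (\ref{KIshL1-1}) are vacuous (taking $m=0$ and ${\cal P}=\emptyset$), and a direct path from $u$ to $1$ of length at most $4\pi$ is supplied by Thomsen's theorem (Cor. 3.2 of \cite{Kthm}) stating that $\overline{\Delta}$ is an isomorphism on $U_0(A)/CU(A)$; applying the Kishimoto construction to this simpler path yields $U(t)$ with the claimed Lipschitz estimate.
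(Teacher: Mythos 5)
There is a genuine gap at the heart of your construction: the uncorrected cocycle $V(t)=\sum_j v_{1,j}(t)e_{1,j}+\sum_j v_{2,j}(t)e_{2,j}$ does \emph{not} satisfy $V(0)\af(V(0)^*)\approx u$. Writing $u_j=u\af(u)\cdots\af^{j-1}(u)$, the telescoping $u_{j+1}\af(u_j^*)=u$ handles the interior rungs, but at the top of the towers the surviving terms are approximately $\af(e_{1,k-1})\af(u_{k-1}^*)+\af(e_{2,k})\af(u_k^*)$, whereas you need $u(e_{1,0}+e_{2,0})$ there; this would force $u_k\approx 1$ and $u_{k+1}\approx 1$, which is false in general. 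The estimate (\ref{drk-2}) only controls the projections $\af(e_{1,k-1}+e_{2,k})\approx e_{1,0}+e_{2,0}$ and says nothing about these unitary coefficients, so "boundary contributions are controlled via (\ref{drk-2})" does not hold. This wrap-around defect is exactly why Kishimoto's argument (and the paper's proof) inserts correction factors: one applies the controlled-length homotopy \ref{MHLT} not only to $u$ but to the products $u_N$ and $u_{N+1}$, samples the resulting paths at $N-1$ (resp.\ $N$) points $v_j,w_j$, and uses coefficients $u_{j+1}(t)\af^{j-N+1}(v_j(t)^*)$ and $u_{j+1}(t)\af^{j-N}(w_j(t)^*)$, so that the mismatch at the top is cancelled at the price of an extra error of order $(4\pi+1)/(N-1)$ per rung, absorbed by taking $N$ large. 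To legitimately invoke \ref{MHLT} for $u_N,u_{N+1}$ you must in addition choose $\dt,\lambda,{\cal G},{\cal P},\{p_i\}$ closed under $\af^{\pm j}$, $j\le N+1$, and scaled by $1/(N+1)$, so that the almost-commutation, the vanishing of ${\rm Bott}({\rm id}_A,u_k)$ (via (\ref{Bottplus-4})) and the $CU$-distance condition all pass from $u$ to the products $u_k$; your choice of data, made from a single application of \ref{MHLT} to $u$ with no $\af$-invariance, does not provide this.

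Two further points. Your Lipschitz argument by "staggered reparametrization" is not sound: if on a subinterval of length $1/M$ only one factor moves but must traverse its whole path of length $\le 4\pi+\ep_1$, its instantaneous speed (and hence that of $V$) is of order $M(4\pi+1)$, not $4\pi+1$; orthogonality of the $e_{i,j}$ bounds $\|V(t)-V(t')\|$ by the \emph{moving coefficient's} increment, which is large after reparametrization. In the paper the $(4\pi+1)$-Lipschitz control comes from the paths $u_j(t),v_j(t),w_j(t)$ themselves (Remark \ref{Rem1}), all run on the common parameter $t\in[0,1]$. Finally, in the ${\cal F}=\emptyset$ case the relevant fact is not Thomsen's isomorphism but that $u\in CU(A)$ forces $u_k\in CU(A)$ with ${\rm cel}(u_k)\le 2\pi$ (by \cite{Lnexp2}), which supplies the needed paths for the corrected construction without any Bott or $CU$-distance hypotheses.
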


\begin{proof}
Let $\ep>0$ and let ${\cal F}\subset A$ be a finite subset.
Choose $N\ge 1$ such that
$\pi/(N-1)<\ep/(2^{10}\cdot 5).$
Let ${\cal F}_1=\cup_{j=-(N+1)}^{N+1} \af^{j}({\cal F}).$

 We will apply 3.10. Let $\dt_1>0$ (in place of $\dt$), $\lambda_1>0$ (in place of $\gamma$), ${\cal G}_1\subset A$ (in place of ${\cal G}$) be a finite subset and ${\cal P}_1\subset \underline{K}(A)$  be a finite subset  and let
$\{p_1', p_2',...,p_{k'}'\} \subset A$ (in place of $\{p_1, p_2,...,p_m\}$) be a finite subset of projections required by \ref{MHLT} for $\eta=\ep/2^{10}(N+1)^2$ (in place of $\ep$) and
${\cal F}_1$ (in place of ${\cal F}).$
Note that we may assume that $p_i'\in {\cal G}_1,$ $i=1,2,...,k'.$

Let ${\cal G}=\cup_{j=-N-1}^{N+1}\af^j({\cal G}_1),$  ${\cal P}=\cup_{j=-N-1}^{N+1}([\af^j]({\cal P}_1))$ and
let $\{p_1,p_2,...,p_m\}$ be  a finite subset of projections in $A$ which includes
$\{p_i', \af^j(p_i'): i=1, 2,...,k' \andeqn j=\pm 1, \pm 2,...,\pm N\}$ as well as $1_A.$
We may assume that ${\cal F}_1\subset {\cal G}.$
Let $\dt={\min\{\ep,\dt_1\}\over{2^{10}(N+1)^3}}$ and $\lambda=\lambda_1/2(N+1).$

Now suppose that $u\in U(A)$ which satisfies the assumption for the above
$\dt,$  $\lambda,$ ${\cal G},$ ${\cal P}$ and $\{p_1,p_2,...,p_m\}.$
Define
\beq\label{KI1-1}
u_0=1,\,\,\,u_1=u,\,\,\, u_k=u\af(u)\cdots \af^{k-1}(u),\,\,\,k=2,...,N+1.
\eneq
For $x\in {\cal G}_1,$ $x,\af^j(x)\in {\cal G},$ $-N\le j\le N.$
 We estimate, for each $x\in {\cal G}_1,$
\beq\label{KI1-2}
\|[u_k, \, x]\|<(N+1)\dt={\min\{\ep, \dt_1\}\over{2^{10}(N+1)^2}}
\eneq
for $k=0,1,...,N+1.$
Note that by the assumption,
\beq\label{KI1-2+1}
{\rm Bott}({\rm id}_A, \, u)|_{\af^{-k}({\cal P}_1)}=0,\,\,\,k=0,1,...,N.
\eneq
Therefore
\beq\label{KI1-2+2}
{\rm Bott}(\af^{-k},u)|_{{\cal P}_1}=0,\,\,\,k=0,1,...,N.
\eneq
It follows that
\beq\label{KI1-2+3}
{\rm Bott}({\rm id}_A, \af^k(u))|_{{\cal P}_1}&=&{\rm Bott}(\af^k\circ \af^{-k}, \af^k(u))|_{{\cal P}_1}\\
&=&[\af^k]\circ {\rm Bott}(\af^{-k}, u)|_{{\cal P}_1}=0,\,\,\, k=0,1,...,N.
\eneq
Hence, by (\ref{KI1-2}) and by (\ref{Bottplus-4})
\beq\label{KI1-2+}
{\rm Bott}({\rm id}_A, u_k)|_{{\cal P}_1}=\sum_{i=0}^{k-1}{\rm Bott}({\rm id}_A, \af^{i}(u))|_{{\cal P}_1}=0,\,\,\, k=2,3,...,N+1.
\eneq
%$k=2, 3,...,N+1.$
Moreover, we note that
\beq\label{KI1-3}
\langle (1-p_i)+p_i\af^j(u)\rangle &=&\langle (1-\af^j(\af^{-j}(p_i)))+\af^j(\af^{-j}(p_i)u)\rangle\\
&=&\langle \af^j(1-\af^{-j}(p_i)+\af^{-j}(p_i)u)\rangle.
\eneq
Since $\af$ is an automorphism, $\af(CU(A))=CU(A).$ It follows that
\beq\label{KI1-4}
{\rm dist}(\langle (1-p_i)+p_i\af^j(u)\rangle, CU(A))<\lambda,\,\,\, i=1,2,...,m\andeqn j=0,1,...,N+1.
\eneq
Therefore,
\beq\label{KI1-5}
{\rm dist}(\langle (1-p_i)+p_iu_k\rangle, CU(A))<\lambda_1,\,\,\, i=1,2,...,m\andeqn k=0,1,2,...,N+1.
\eneq

By applying \ref{MHLT}, we obtain  piecewise smooth and continuous paths of unitaries $\{u_j(t): t\in [0,1]\}$ of $A$ such that $u_j(0)=u_j,$
$u_j(1)=1_A,$
\beq\label{KI1-6}
&&\|[u_j(t),\, x]\|<\eta \tforal x\in \cup_{j=-N-1}^{N+1} \af^k({\cal F})\andeqn \tforal t\in [0,1],\\\label{KI1-6+1}
&&\|u_j(t)-u_j(t')\|\le (4\pi+1)|t-t'|\tforal t,\,t'\in [0,1],
\eneq
 $j=1,2,...,N+1.$
Hence, for $x\in {\cal F}$ and $t\in [0,1],$
\beq\label{KI1-7}
\|\af^k(u_j(t))x-x\af^k(u_j(t))\|
%&=&\|\af^k(u_j(t))\af^{-k}(x)-\af^{-k}(x)u_j(t))\|\\
&=&\|u_j(t)\af^{-k}(x)-\af^{-k}(x)u_j(t)\|<\eta,
\eneq
 $k=0, \pm 1, \pm 2,...,\pm N,$
and $j=1,2,...,N+1.$
Set $v(t)=u_N(1-t)$ and $w(t)=u_{N+1}(1-t)$ for $t\in [0,1].$
Thus,
\beq\label{KI1-8}
\|\af^k(v(t))x-x\af^k(v(t))\|<\eta\andeqn
\|\af^k(w(t))x-x\af^k(w(t))\|<\eta
\eneq
for all $x\in {\cal F},$ $t\in [0,1]$ and
$k=0,\pm 1,\pm 2,...,\pm N.$
Put
\beq\label{KI1-8+1}
v_j=v\left({j\over{N-1}}\right),\,\,\,j=0,1,...,N-1\andeqn w_j=w({j\over{N}}),\,\,\, j=0,1,...,N.
\eneq
Define $v_j(t)=v({j(1-t)\over{N-1}}),$ $j=0,1,...,N-1,$ and $w_j(t)=w({j(1-t)\over{N}}),\,\,\,j=0,1,...,N$ for $t\in [0,1].$
Note that
\beq\label{Kadd-1}
&&v_j(0)=v_j,\,\,\,v_j(1)=v(0)=u_N(1)=1_A,\\\label{Kadd-2}
&&w_j(0)=w_j,\,\,\,
w_j(1)=w(0)=u_{N+1}(1)=1_A
\eneq
In particular,
\beq\label{Kadd-3}
v_{N-1}(0)=v_{N-1}=v(1)=u_N(0)=u_N.
\eneq

There is an integer $N_1\ge 1$ and
$0=t_0<t_1<\cdots <t_{N_1}=1$ such that
\beq\label{KI1-8+2}
&&\|u_k(t_i)-u_k(t)\|<\ep/2^{13}(N+1)^2=\eta/2^3,\,\,\,k=1,2,...,N,\\\label{KI1-8+2+}
&&\|v_j(t_i)-v_j(t)\|<\ep/2^{13}(N+1)^2=\eta/2^3,\,\,\, j=0,1,...,N-1
\andeqn\\\label{KI1-8+2++}
&&\|w_j(t_i)-w_j(t)\|<\ep/2^{13}(N+1)^2=\eta/2^3,\,\,\, j=0,1,...,N,
\eneq
for all $t\in [t_{i-1}, t_i],$ $i=1,2,...,N_1.$
Put
$$
{\cal W}=\cup_{i=0}^{N_1}(\{u_k(t_i): 1\le k\le N\}\cup \{v_j(t_i): 0\le j\le N-1\}\cup\{w_j(t_i): 0\le j\le N\}).
$$

Let ${\cal H}={\cal F}\cup_{j=-N}^N\af^j({\cal W})$ be a finite subset.
Let $\{e_{1,0}, e_{1,1},...,e_{1,N-1},e_{2,0},..., e_{2,N}\}$ be a family of mutually orthogonal projections in the definition of the Rokhlin
property in \ref{dRK} such that
\beq\label{KI1-9}
\sum_{j=0}^{N-1} e_{1,j}+\sum_{j=0}^N e_{2,j}&=&1,\\\label{KI1-9+1}
\|e_{i,j}y-ye_{i,j}\|&<& \eta/2=\ep/2^{11}(N+1)^2 \tforal y\in {\cal H},\, i=0,1,\\\label{KI1-9+2}
\|\af(e_{1,j})-e_{1,j+1}\|&<&\eta/2=\ep/2^{11}(N+1)^2,\,\,\,\,\,\,\,j=0,1,...,N-2
\andeqn\\\label{KI1-9+3}
\|\af(e_{2,j})-e_{2,j+1}\|&<&\eta/2=\ep/2^{11}(N+1)^2,\,\,\,j=0,1,2,...,N-1.
\eneq

%\{u_k, 1\le k\le N+1,\,\af^{j-N+1}(v_j), \af^{j-N}(w_j), 0\le j\le N\}
It follows that (see 4.2 of \cite{K1})
\beq\label{KI1-10-1}
\|\af(e_{1,N-1}+e_{2,N})-(e_{1,0}+e_{2,0})\|<(2N-1)(\eta/2)<\ep/2^{9}(N+1).
\eneq
Therefore
\beq\label{KI1-10-2}
\|e_{1,j}\af(e_{1,N-1})\|&<&\ep/2^{9}(N+1)\,\,\,j=1,2,...,N-1,\\\label{KI1-10-2+1}
\|e_{2,j}\af(e_{1, N-1})\|&<&\ep/2^{9}(N+1),\,\,\, j=1,2,...,N,\\\label{KI1-10-2+2}
\|e_{1,j}\af(e_{2,N})\|&<& \ep/2^{9}(N+1),\,\,\, j=1,2,...,N-1\andeqn\\\label{KI1-10-2+3}
\|e_{2,j}\af(e_{2,N})\|&<& \ep/2^{9}(N+1),\,\,\, j=1,2,...,N.
\eneq
Moreover,  by (\ref{KI1-8+2}), (\ref{KI1-8+2+}), (\ref{KI1-8+2++}) and (\ref{KI1-9+1}),  for all $t\in [0,1],$
\beq\label{KI1-10-3}
&&\|[e_{i,j},\, \af^l(u_k(t))]\|<\eta,\,\,\,\,\,\,\,\,k=1,2,...,N,\\\label{KI1-10-3+1}
&&\|[e_{i,j},\,\af^l( v_k(t))]\|<\eta,\,\,\,\,\,\,\,\,\, k=0,1,...,N-1,\\\label{KI1-10-3+2}
&&\|[e_{i,j},\, \af^l(w_{k}(t))]\|<\eta,\,\,\,\,\,\,\,\, k=0,1,...,N
\eneq
 for
$j=1,2,...,N+i-2,\,\,\,i=0,1,$ and $-N\le l\le N.$

%Put $\eta=\ep/2^{10}(N+1)^2.$
Define
\beq\label{KI1-10}
U(t)=\sum_{j=0}^{N-1} u_{j+1}(t)\af^{j-N+1}(v_j(t)^*)e_{1,j}+\sum_{j=0}^N u_{j+1}(t)\af^{j-N}(w_j(t)^*)e_{2,j}.
\eneq
Note that, by (\ref{Kadd-1}) and (\ref{Kadd-2}),
\beq\label{Kadd2-1}
U(1)&=&\sum_{j=0}^{N-1}u_{j+1}(1)\af^{j-N+1}(v_j(1)^*)e_{1,j}+\sum_{j=0}^N u_{j+1}(1)\af^{j-N}(w_j(1)^*)e_{2,j}\\
&=& \sum_{j=0}^{N-1}e_{1,j} +\sum_{j=0}^Ne_{2,j}=1_A.
\eneq
Put
$$
\eta_1=4(N-1)N\eta+4N(N+1)\eta+4N(N+1)\eta+4N(N+1)\eta<4^2N(N+1)\eta.
$$
Since $e_{1,j}e_{1,i}=0,$ if $i\not=j,$ and $e_{1,i}e_{2,j}=e_{2,j}e_{1,i}=0$ for all $i,j,$
one checks that,  by (\ref{KI1-10-3}), (\ref{KI1-10-3+1}) and (\ref{KI1-10-3+2}),
\beq\nonumber
%\label{KI1-11}
\hspace{-04in}U(t)^*U(t) &= &\sum_{i,j}e_{1,j}\af^{j-N+1}(v_j(t))u_{j+1}(t)^*u_{i+1}(t)\af^{i-N+1}(v_i(t)^*)e_{1,i}\\\nonumber
  &&\hspace{0.2in}+\sum_{i,j}e_{2,j}\af^{j-N}(w_j(t))u_{j+1}(t)^*u_{i+1}(t)
  \af^{i-N}(w_i(t)^*)e_{2,i}\\\nonumber
  &&\hspace{0.3in}+\sum_{i,j}e_{1,j}\af^{j-N+1}(v_j(t))u_{j+1}(t)^*u_{i+1}(t)
  \af^{i-N}(w_i(t)^*)e_{2,i}\\
  &&\hspace{0.4in}+\sum_{i,j}^N e_{2,j}\af^{j-N}(w_j(t))u_{j+1}(t)^*u_{i+1}(t)\af^{i-N+1}(v_i(t)^*)e_{1,i}\\
  &&\hspace{-0.2in}\approx_{\eta_1}
  \sum_{j=0}^{N-1}e_{1,j}\af^{j-N+1}(v_j(t))u_{j+1}(t)^*u_{j+1}(t)\af^{j-N+1}
  (v_j(t)^*)e_{1,j}\\
  &&\hspace{0.5in}+\sum_{j=0}^N e_{2,j}\af^{j-N}(w_j(t))u_{j+1}(t)^*u_{j+1}(t)\af^{j-N}(w_j^*(t))e_{2,j}\\
  &=&\sum_{j=0}^{N-1}e_{1,j}+\sum_{j=0}^Ne_{2,j}=1
  \eneq
  for all $t\in [0,1].$
Thus
\beq\label{KI1-12}
\|U(t)^*U(t)-1\|<4^2\eta=16(\ep/2^{10})=\ep/2^6\tforal t\in [0,1].
\eneq
Similarly,
\beq\label{KI1-13}
\|U(t)U(t)^*-1\|<\ep/2^6\tforal t\in [0,1].
\eneq
One also checks that, if $x\in {\cal F},$ by (\ref{KI1-9+1}), (\ref{KI1-7}), (\ref{KI1-8}) and (\ref{KI1-6}),
\beq\label{KI1-13+}
\hspace{-0.7in}U(t)x&=&\sum_{j=0}^{N-1} u_{j+1}(t)\af^{j-N+1}(v_j(t)^*)e_{1,j}x+\sum_{j=0}^N u_{j+1}(t)\af^{j-N}(w_j(t)^*)e_{2,j}x\\
&&\hspace{- 1in}\approx_{(2N+1)\eta/2}\sum_{j=0}^{N-1} u_{j+1}\af^{j-N+1}(v_j(t)^*)xe_{1,j}
+\sum_{j=0}^N u_{j+1}(t)\af^{j-N}(w_j(t)^*)xe_{2,j}\\
&\approx_{(2N+1)\eta}& \sum_{j=0}^{N-1} u_{j+1}(t)x\af^{j-N+1}(v_j(t)^*)e_{1,j}
+\sum_{j=0}^N u_{j+1}(t)x\af^{j-N}(w_j(t)^*)e_{2,j}\\
&&\hspace{-1 in}\approx_{(2N+1)\eta} \sum_{j=0}^{N-1} xu_{j+1}(t)\af^{j-N+1}(v_j(t)^*)e_{1,j}
+\sum_{j=0}^N xu_{j+1}(t)\af^{j-N}(w_j(t)^*)e_{2,j}
=xU(t).
\eneq
Therefore
\beq\label{KI1-14}
\|[U(t),\, x]\|<6N\eta\le 6\ep/2^{10}(N+1)<\ep/2^7\tforal x\in {\cal F}.
\eneq
We have, by (\ref{KI1-9+2}), (\ref{KI1-10-2}), (\ref{KI1-9+1}), (\ref{KI1-6+1}), (\ref{KI1-8+1}) and (\ref{KI1-9+1}),
\beq\label{KI1-15}
 &&\hspace{-1in}\sum_{i,j}u_{j+1}\af^{j-N+1}(v_j^*)e_{1,j}\af(e_{1,i})\af^{i-N+2}(v_i)\af(u_{i+1}^*)\\\nonumber
&&\hspace{-0.5in}\approx_{ N\ep/2^{9}(N+1)+ N(N-1)\eta}\,\,
u_1e_{1,0}\af(e_{1,N-1})\af(v_{N-1})
\af(u_N^*)+\\
&&\hspace{0.8in}\sum_{j=1}^{N-1}u_{j+1}\af^{j-N+1}(v_j^*)e_{1,j}\af^{j-N+1}(v_{j-1})
\af(u_j^*)\\
%&&\hspace{1.2in}+u_1e_{1,0}\af(e_{1,N-1})\af(v_{N-1})
%\af(u_N^*)\\
\hspace{-0.8in}({\rm by\, (\ref{Kadd-3})})\hspace{0.4in} &&=u_1e_{1,0}\af(e_{1,N-1})+
\sum_{j=1}^{N-1}u_{j+1}\af^{j-N+1}(v_j^*)e_{1,j}\af^{j-N+1}(v_{j-1})
\af(u_j^*)\\
%&&\hspace{1.2in}+u_1e_{1,0}\af(e_{1,N-1})\\
&&\hspace{-0.5in}\approx_{4(N-1)\eta}\,\, u_1e_{1,0}\af(e_{1,N-1})+\sum_{j=1}^{N-1}e_{1,j}u_{j+1}\af^{j-N+1}(v_j^*v_{j-1})\af(u_j^*)e_{1,j}\\
&&\hspace{-0.5in}\approx_{(4\pi+1)/(N-1)}\,\, u_1e_{1,0}\af(e_{1,N-1})+\sum_{j=1}^{N-1}e_{1,j}u_{j+1}\af(u_j^*)e_{1,j}
\\
&&\hspace{-0.5in}\approx_{(N-1)\eta}\,\, ue_{1,0}\af(e_{1, N-1})+\sum_{j=1}^{N-1} ue_{1,j}.
\eneq
Thus
\beq\label{KI1-16}
\hspace{-0.4in}\|\sum_{i,j} u_{j+1}\af^{j-N+1}(v_j^*)e_{1,j}\af(e_{1,i})\af^{i-N+2}(v_i)\af(u_{i+1}^*)-(\sum_{j=1}^{N-1} ue_{1,j}+ue_{1,0}\af(e_{1, N-1}))\|
<{\ep\over{2^7}}.
\eneq
We estimate that, by (\ref{KI1-9+3}), (\ref{KI1-10-2+1}), (\ref{KI1-9+1}), (\ref{KI1-8+1}), (\ref{KI1-6+1}) and (\ref{KI1-9+1}),
\beq\label{KI1-17}
&&\hspace{-0.8in}\sum_{i,j}u_{j+1}\af^{j-N}(w_j^*)e_{2,j}\af(e_{2,i})\af^{i-N+1}(w_i)\af(u_{i+1}^*)\\\nonumber
&&\approx_{N\ep/2^{10}(N+1)+N(N+1)\eta}\,\,u_1e_{2,0}\af(e_{2,N})\af(w_N)\af(u_{N+1}^*)+\\
 &&\hspace{1.6in}\sum_{j=1}^{N}u_{j+1}\af^{j-N}(w_j^*)e_{2,j}\af^{j-N}(w_{j-1})\af(u_j^*)\\
%&&\hspace{1.8in}+u_1e_{2,0}\af(e_{2,N})\af(w_N)\af(u_{N+1}^*)\\
&&\approx_{4N\eta}\,\,u_1e_{2,0}\af(e_{2,N})+ \sum_{j=1}^{N}e_{2,j}u_{j+1}\af^{j-N}(w_j^*)\af^{j-N}(w_{j-1})\af(u_j^*)e_{2,j} \\
&&\approx_{(4\pi+1)/(N-1)}\,\, ue_{2,0}\af(e_{2,N})+\sum_{j=1}^{N}e_{2,j}u_{j+1}\af(u_j^*)e_{2,j} \\
&&\approx_{N\eta}\,\,ue_{2,0}\af(e_{2,N})+ \sum_{j=1}^{N}ue_{2,j}.
\eneq
Thus
\beq\label{KI1-18}
\hspace{-0.2in}\|\sum_{i,j}u_{j+1}\af^{j-N}(w_j^*)e_{2,j}\af(e_{2,i})\af^{i-N+1}(w_i)\af(u_{i+1}^*)-(\sum_{j=1}^{N}ue_{2,j} +ue_{2,0}\af(e_{2,N}))\|<{\ep\over{2^7}}.
\eneq
Moreover, by (\ref{KI1-9+3}), (\ref{KI1-10-2+2}), (\ref{KI1-9+2}) and (\ref{KI1-10-2+1}),
\beq\label{KI1-19}
&&\hspace{-0.8in}\sum_{i,j}u_{j+1}\af^{j-N+1}(v_j^*)e_{1,j}\af(e_{2,i})\af^{i-N+1}(w_i)\af(u_{i+1}^*)\\
&&\approx_{N^2\eta} \sum_{j=0}^{N-1}u_{j+1}\af^{j-N+1}(v_j^*)e_{1,j}\af(e_{2, N})\af(w_N)\af(u_{N+1}^*)\\
&&\approx_{N\ep/2^{10}(N+1)}u_{1}\af^{-N+1}(v_0^*)e_{1,0}\af(e_{2, N})\\
&&=ue_{1,0}\af(e_{2, N})\andeqn\\
%\eneq
%\beq\label{KI1-20}
&&\hspace{-0.8in}\sum_{i,j}u_{j+1}\af^{j-N}(w_j^*)e_{2,j}\af(e_{1,i})\af^{i-N+2}(v_i^*)
\af(u_{i+1}^*)\\
&&\approx_{(N+1)N\eta} \sum_{j=0}^{N}u_{j+1}\af^{j-N}(w_j^*)e_{2,j}\af(e_{1,N-1})\af(v_{N-1}^*)\af(u_N)\\
&&\approx_{N\ep/2^{10}(N+1)}u_1\af^{-N}(w_0^*)e_{2,0}\af(e_{1, N_1})\\
&&=ue_{2,0}\af(e_{1, N-1}).
\eneq
In other words,
\beq\label{KI1-20}
&&\hspace{-0.3in}\|\sum_{i,j}u_{j+1}\af^{j-N+1}(v_j^*)e_{1,j}\af(e_{2,i})\af^{i-N+1}(w_i)\af(u_{i+1}^*)-ue_{1,0}\af(e_{2, N})\|<\ep/2^9\andeqn\\\label{KI1-20+1}
&&\hspace{-0.3in}\|\sum_{i,j}u_{j+1}\af^{j-N}(w_j^*)e_{2,j}\af(e_{1,i})\af^{i-N+2}(v_i^*)
\af(u_{i+1}^*)-ue_{2,0}\af(e_{1, N-1})\|<\ep/2^9.
\eneq
By (\ref{KI1-16}), (\ref{KI1-18}), (\ref{KI1-20}) and (\ref{KI1-20+1}),
\beq\nonumber
%\label{KI1-21}
\hspace{-0.6in}U(0)\af(U(0)^*)&=&\sum_{i,j} u_{j+1}\af^{j-N+1}(v_j^*)e_{1,j}\af(e_{1,i})\af^{i-N+2}(v_i)\af(u_{i+1}^*)\\\nonumber
&&\hspace{0.2in}+\sum_{i,j}u_{j+1}\af^{j-N}(w_j^*)e_{2,j}\af(e_{2,i})\af^{i-N+1}(w_i)
\af(u_{i+1}^*)\\\nonumber
&&\hspace{0.3in}+\sum_{i,j}u_{j+1}\af^{j-N+1}(v_j^*)e_{1,j}\af(e_{2,i})\af^{i-N+1}(w_i)
\af(u_{i+1}^*)\\
&&\hspace{0.4in}+\sum_{i,j}u_{j+1}\af^{j-N}(w_j^*)e_{2,j}\af(e_{1,i})\af^{i-N+2}(v_i)
\af(u_{i+1}^*)\\\nonumber
&&\hspace{-0.6in}\approx_{\ep/64+\ep/64+\ep/64+\ep/64}
\sum_{j=1}^{N-1} ue_{1,j}+ue_{1,0}\af(e_{1, N-1})
+\sum_{j=1}^{N}ue_{2,j} +ue_{2,0}\af(e_{2,N})\\
&&\hspace{1.6in}+ue_{1,0}\af(e_{2, N})+ue_{2,0}\af(e_{1, N-1})\\\nonumber
&&\hspace{-0.6in}=\sum_{j=1}^{N-1} ue_{1,j}+ue_{1,0}(\af(e_{1, N-1})+\af(e_{2, N}))
+\sum_{j=1}^{N}ue_{2,j}\\
 &&\hspace{1.6in}+ue_{2,0}(\af(e_{1, N-1})+\af(e_{2,N}))\\
&&\hspace{-0.6in}\approx_{2\ep/2^{10}(N+1)}\sum_{j=0}^{N-1}ue_{1,j}+\sum_{j=0}^N ue_{2,j}=u.
\eneq
Therefore
\beq\label{KI1-22}
\|U(0)\af(U(0)^*)-u\|<\ep/16 +\ep/2^9(N+1).
\eneq
Let $V(t)=U(t)(U(t)^*U(t))^{-1/2}$ for $t\in [0, 1].$ Then $\{V(t): t\in [0,1]\}$ is a continuous path of unitaries in $A$. Since $U(1)=1,$ $V(1)=1.$ Moreover,
\beq
\|V(0)\af(V(0)^*)-u\|<\ep\andeqn \|[V(t),\, x]\|<\ep\tforal x\in {\cal F}
\eneq
and for all $t\in [0,1].$

For the last part (with ${\cal F}=\emptyset$), we note that, in the above,
$u_k\in CU(A),$ $k=0,1,...,N+1.$ Note that in this case, $CU(A)\subset U_0(A).$ Moreover, by \cite{Lnexp2},
${\rm cel}(u_k)\le 2\pi,$ $k=0,1,...,N+1.$ So $V(t)$ can be constructed (without worrying
about the set ${\cal F}$).
\end{proof}

\begin{thm}\label{MT1}
Let $A$ be a unital separable simple amenable \CA\, with $TR(A)\le 1$ which satisfies the UCT. Suppose that  $\af,\, \bt\in \Aut(A)$ have the Rokhlin
property.  Then
the following holds:

(1) If  $\af\circ \bt^{-1}$ is asymptotically inner, then there exists a unitary $u\in U(A)$ and  a strongly asymptotically inner automorphism $\sigma$ such that
$$
\af={\rm Ad}\, u\circ \sigma \circ \bt\circ \sigma^{-1}.
$$

(2) If $\af\circ \bt^{-1}$ is strongly asymptotically inner, then
 there exists a sequence of unitaries $u_n\in U(A)$ and a sequence $\{\sigma_n\}$ of strongly asymptotically inner automorphisms of $A$ such that
$$
\af= {\rm Ad}\, u_n\circ \sigma_n\circ \bt\circ \sigma_n^{-1}\andeqn
\lim_{n\to\infty}\|u_n-1\|=0.
$$

\end{thm}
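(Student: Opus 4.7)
The plan is an Elliott-style approximate intertwining driven by Lemma~\ref{KishL1}: at each stage, apply the stability lemma to absorb the residual inner perturbation through conjugation by $\af$. Fix an increasing sequence of finite subsets ${\cal F}_n \subset A$ with dense union and summable tolerances $\epsilon_n > 0$. For each $n$, feed $({\cal F}_n, \epsilon_n)$ into Lemma~\ref{KishL1} applied to $\af$ to extract parameters $(\delta_n,\lambda_n,{\cal G}_n,{\cal P}_n,\{p_j^{(n)}\})$.

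Let $\{z(t)\}_{t\ge 0}$ be a continuous unitary path witnessing $\af\circ\bt^{-1}=\lim_{t\to\infty}{\rm Ad}(z(t))$, with $z(0)=1_A$ in part (2). Pick $t_0$ large enough that $u_0:=z(t_0)$ satisfies $\|u_0^*\bt(x)u_0-\af(x)\|$ small on a sufficiently large subset. In general $u_0$ fails the $K$-theoretic hypotheses of Lemma~\ref{KishL1} (Bott triviality and approximate $CU$-membership of the projection-twisted unitaries). Invoke Theorem~\ref{Ext1} to produce a correcting unitary $w$ almost-commuting with the relevant sets and having inverse Bott and $CU$ data to $u_0$, so that $\widetilde{u}_0:=u_0 w$ satisfies those hypotheses. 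In part (1), $w$ is absorbed into the final outer-conjugacy unitary $u$; in part (2), the normalization $z(0)=1_A$ combined with the smallness of $z(t)-z(t')$ near $t=0$ lets one take $w$ arbitrarily close to $1_A$, keeping the cumulative error small.

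Inductive step: maintain $\sigma_n:={\rm Ad}(W_n)$ with $W_n=V_nV_{n-1}\cdots V_1$ and a residual $u_n$ satisfying the hypotheses of Lemma~\ref{KishL1} at level $n+1$, so that $\sigma_n\circ\bt\circ\sigma_n^{-1}$ agrees with ${\rm Ad}(u_n^*)\circ\af$ within $\epsilon_n$ on ${\cal F}_n$. Apply Lemma~\ref{KishL1} to $u_n$ to produce a piecewise-smooth path $\{V_{n+1}(t)\}_{t\in[0,1]}$ from $V_{n+1}:=V_{n+1}(0)$ to $V_{n+1}(1)=1_A$, with $V_{n+1}\af(V_{n+1}^*)$ within $\epsilon_{n+1}$ of $u_n$, almost-commuting with ${\cal F}_n$, and Lipschitz constant at most $4\pi+1$. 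A direct calculation using ${\rm Ad}(v)(x)=v^*xv$ shows that the new residual
\[
u_{n+1}:=V_{n+1}^*\,u_n\,\af(V_{n+1})
\]
is within $2\epsilon_{n+1}$ of $1_A$ on ${\cal F}_n$, and hence automatically satisfies the (much weaker) hypotheses of Lemma~\ref{KishL1} at level $n+2$. The induction continues.

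Convergence and paths: the Lipschitz bound together with summability of $\epsilon_n$ ensures that the concatenated path (with time reparameterized to run over $[0,\infty)$ through disjoint intervals $[n-1,n]$) is a continuous piecewise-smooth unitary path from $1_A$, witnessing that $\sigma:=\lim_n \sigma_n$ is strongly asymptotically inner. In part (1), the accumulated correction yields a fixed unitary $u$ with $\af={\rm Ad}(u)\circ\sigma\circ\bt\circ\sigma^{-1}$; in part (2), $u_n\to 1_A$ as required. The main obstacle is the initial $K$-theoretic correction: Theorem~\ref{Ext1} must produce a unitary $w$ with prescribed Bott and $CU$-rotation data matching the obstructions of $u_0$, while still almost-commuting with a large finite set. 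Once this has been done, the stability of Lemma~\ref{KishL1}---in particular its Lipschitz control and the fact that the path lands at $1_A$---makes all subsequent residuals collapse to identity automatically.
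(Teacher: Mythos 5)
There is a genuine gap, and it is exactly the point your proposal dismisses as settled: the claim that the $K$-theoretic correction via Theorem~\ref{Ext1} is needed only once, after which ``all subsequent residuals collapse to identity automatically.'' The residual $u_{n+1}=V_{n+1}^*u_n\af(V_{n+1})$ is indeed norm-close to $1_A$, but a norm-small residual carries no new information: it merely re-expresses the stage-$n$ approximation, which holds within $\ep_n$ on ${\cal F}_n$ only. To pass to stage $n+1$ --- accuracy $\ep_{n+1}$ on the larger set ${\cal F}_{n+1}$ --- you must return to the intertwining path $z(t)$ at a later time $t_{n+1}$, and that unitary is not norm-small; its Bott data on the new, larger finite subset of $\underline{K}(A)$ and, more seriously, the classes of $\langle(1-p)+p\,z(t_{n+1})\rangle$ in $U(A)/CU(A)$ are uncontrolled. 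The Bott part can be propagated along the path by homotopy invariance, but the $CU$-part cannot: it takes values in a group whose identity component is $\Aff(T(A))/\overline{\rho_A(K_0(A))}$, non-discrete when $TR(A)=1$, and it moves continuously along the path. This is precisely why the paper's proof of Theorem~\ref{MT1} invokes Theorem~\ref{Ext1} at \emph{every} stage (the correcting unitaries $v_2',v_3',v_4',\dots$ and the maps $\Gamma_n$) in order to verify the hypotheses (\ref{KIshL1-1}) of Lemma~\ref{KishL1} before each application; with a single initial correction those hypotheses simply cannot be checked at the stages where they matter. (Note also that if your scheme worked as stated it would make the final unitary absorbable in all cases, contradicting the $K_1(A)/H_1(K_0(A),K_1(A))$ obstruction discussed in the introduction and Theorem~\ref{CMT3}.)

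Beyond this, the proposal does not explain how the approximate relations on finite sets become the \emph{exact} equality $\af={\rm Ad}\,u\circ\sigma\circ\bt\circ\sigma^{-1}$, nor why $\sigma_n(x)$ and $\sigma_n^{-1}(x)$ converge for every $x$: convergence of $W_n=V_n\cdots V_1$ conjugations requires each new $V_{n+1}$ to almost commute with the images of all earlier finite sets under the previously built conjugators, which must be engineered into the choice of the sets (this is the role of the enlarged sets ${\cal F}_n'$ containing the ${\rm Ad}\,u_j$-images in the paper). The paper settles both issues with the alternating Evans--Kishimoto intertwining: Lemma~\ref{KishL1} is applied alternately to the $\bt$-side and the $\af$-side (this is where the Rokhlin property of \emph{both} automorphisms is used), each stage gives an exact decomposition $V=\bt_{2k-1}(u^*)u\,w$ (resp.\ for $\af_{2k}$) with $\|w-1\|<\ep/4^n$, and the two exactly conjugated sequences $\af_{2k}$ and $\bt_{2k+1}$ converge to a common limit; the accumulated small remainders $w_n$ produce the unitary $u=\gamma_0^{-1}(W_0^*W_1)$ with $\|u-1\|<\ep$, which is how part (2) is obtained --- not, as you suggest, from smallness of $z(t)-z(t')$ near $t=0$; the normalization $z(0)=1_A$ only removes the need to replace $\bt$ by ${\rm Ad}\,v_0\circ\bt$. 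A one-sided scheme of the type you sketch could perhaps be salvaged, but only by re-correcting the $K$-theoretic data at every stage and by keeping track of the small exact remainders, at which point it essentially reproduces the paper's argument.
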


\begin{proof}
By the assumption there exists a continuous path of unitaries $\{v(t): t\in [0, \infty)\}\subset U(A)$ such that
\beq\label{MT1-1}
\af\circ \bt^{-1}(a)=\lim_{t\to\infty}v_t^*av_t\tforal a\in A.
\eneq
By replacing $\bt$ by ${\rm Ad}\, v_0\circ \bt,$ without loss of generality, we may assume that $v_0=1.$ Note that ${\rm Ad} v_0\circ \bt$ also has the Rokhlin property (see Lemma 3.2 in \cite{BEK}).
 It is important to note  that if $\af\circ \bt^{-1}$ is strongly asymptotically inner, then
one may always assume that $v_0=1$ without replacing $\bt$ by ${\rm Ad}\, v_0\circ \bt.$ In what follows we will only prove part (2) of the theorem.

Let $\{{\cal F}_n\}$ be an increasing sequence of finite subsets
in the unit ball of $A$ such that $\cup_{n=1}^{\infty} {\cal F}_n$ is dense in the unit ball of $A.$ We assume that $1_A\in {\cal F}_1.$
Let $1>\ep>0.$
% Let $\dt_1>0$ (in place of $\dt$), $\gamma_1$ (in place of $\gamma$), ${\cal G}_1\subset A$ (in place of ${\cal G}$) be a finite subset,
%${\cal P}_1\subset \underline{K}(A)$ (in place of ${\cal P}$) and projections
%$p_{1,1}, p_{1,2},...,p_{1, p(1)}\subset A$ (in place of $p_1,p_2,...,p_n$) required by \ref{MHLT} for $\ep/8$ (in place of $\ep$) and
%${\cal F}_1$ (in place of ${\cal F}$).  We may assume that $p_1=1_A.$
%Put $\ep_1=\min\{\ep, \gamma_1, \dt_1\}.$
We choose $t_1>0$ such that
\beq\label{MT1-10}
\af\approx_{\ep/2^4} {\rm Ad}\, v_t\circ \bt\,\,\,{\rm on}\,\,\,  {\cal F}_1\,\,\rforal t>t_1.
\eneq
It follows from  \ref{Ext1} that there is a unitary $v_1'\in U(A)$ such that
\beq\label{MT1-11}
v_{t_1}v_1'\in CU(A)\andeqn \|[\af(x),\, v_1']\|<\ep/2^4\tforal  x\in  {\cal F}_1
\eneq
(by considering $\Gamma: G_{00}\to U(A)/CU(A)$ defined by $\Gamma([1_A])=\overline{v_{t_1}^*},$ where $G_{00}$ is the group generated by
$[1_A]$).
Let $V_1=v_{t_1}v_1' \in CU(A).$ Then
\beq\label{MT1-12}
\af\approx_{\ep/2^3} {\rm Ad}\, V_1\circ \bt\,\,\,{\rm on}\,\,\, {\cal F}_1.
\eneq
It follows from the last part of \ref{KishL1} that there are  unitaries $u_1, w_1\in  U_0(A)$ such that
\beq\label{MT1-13}
V_1=\bt(u_1^*)u_1w_1\andeqn \|w_1-1\|<\ep/4.
\eneq

Put
$$
\bt_1={\rm Ad}\, w_1\circ {\rm Ad}\, u_1\circ \bt\circ {\rm Ad}\, u_1^*
$$
and $v_t^{(1)}=V_1^*v_{t+t_1}.$
Then, by  (\ref{MT1-10}),
\beq\label{MT1-14}
\af=\lim_{t\to\infty}{\rm Ad}\, v_t^{(1)}\circ \bt_1,\,\,\, v_0^{(1)}=(v_1')^*\andeqn\\\label{MT1-14+1}
\af\approx_{\ep/2^4} {\rm Ad}\, v_t^{(1)}\circ \bt_1\,\,\, {\rm on}\,\,\,  {\cal F}_1\tforal t.
\eneq

Let ${\cal F}_1'={\cal F}_1\cup{\rm Ad}\, u_1({\cal F}_1).$
Let $\dt_1>0$ (in place of $\dt$), $\lambda_1>0$ (in place of $\lambda$), let ${\cal G}_1\subset A$ (in place of ${\cal G}$) be a finite
subset, let ${\cal P}_1\subset \underline{K}(A)$ (in place of ${\cal P}$) be a finite subset and let $\{p_{1,1}, p_{1,2},...,p_{1,p(1)}\}$ (in place
of $\{p_1, p_2,...,p_n\}$) be a finite subset of projections in  $A$ required by \ref{KishL1}
for $\ep/4^{3}$(in place of $\ep$) and ${\cal F}_1'$ (in place of ${\cal F}$).
%(with $C=A$ and $\phi={\rm id}_A$).
We may assume that ${\cal F}_1'\cup\{p_{1,j}: 1\le j\le p(1)\}\subset {\cal G}_1$  and $\{[p_{1,j}]: 1\le j\le p(1)\}\subset {\cal P}_1.$ Without loss of generality, we may also assume
that $p_1=1_A.$
Let $G_0^{(1)}$ be the subgroup of $K_0(A)$ generated by $\{[p_{1,j}]: 1\le j\le p(1)\}.$ We may assume that $\dt_1$ is sufficiently small  so that
$\Gamma([p_{1,j}])=\overline{\langle (1-p_{1,j})+p_{1,j}W\rangle}$ defines
a \hm\, from $G_0^{(1)}$ into $U_0(A)/CU(A),$  and ${\rm Bott}({\rm id}_A,\, W)|_{{\cal P}_1}$ is well-defined, where $W$ is any unitary
in  $A$ such that $\|[x, \, W]\|<\dt_1$ for all $x\in {\cal G}_1.$ Moreover, we may further assume that
\beq\label{MT1-14+2}
{\rm Bott}({\rm id}_A,\, W_1W_2)|_{{\cal P}_1}={\rm Bott}({\rm id}_A,\, W_1)|_{{\cal P}_1}
+{\rm Bott}({\rm id}_A,\, W_2)|_{{\cal P}_1}\andeqn\\\label{MT1-14+3}
\langle (1-p_{1,j})+p_{1,j}W_1W_2\rangle=\langle (1-p_{1,j})+p_{1,j}W_1\rangle \langle (1-p_{1,j})+p_{1,j}W_2\rangle,
\eneq
provided that
\beq\label{MT1-14+4}
\|[y,\, W_k]\|<\dt_1\tforal y\in {\cal G}_1,\,\,\, k=1,2,.
\eneq
Put $\ep_1=\min\{\ep, \dt_1/2, \gamma_1/2\}.$
Since $\bt_1=\lim_{t\to\infty}{\rm Ad}\,( v_t^{(1)})^* \circ \af,$ we choose $t_2>0$ such that for any $t\ge t_2,$
\beq\label{MT1-15}
\bt_1\approx_{\ep_1/4^{4}} {\rm Ad}\, (v_t^{(1)})^*\circ \af\,\,\,{\rm on}\,\,\, \bt_1^{-1}({\cal G}_1)\cup {\cal F}_1'.
\eneq
It follows from \ref{Ext1} again that there is $v_2'\in U(A)$ such that
\beq\label{MT1-16}
&&v_{t_2}^{(1)}v_2'\in CU(A),\,\,\,\|[x,\, v_2']\|<\ep_1/4^4\tforal x\in \af( \bt_1^{-1}({\cal G}_1)\cup {\cal F}_1')\\\label{MT1-16+1}
&&\hspace{0.2in}\andeqn {\rm Bott}({\rm id}_A,\, v_2')|_{{\cal P}_1}=0.
\eneq
Let $V_2=v_{t_2}^{(1)}v_2'\in CU(A).$  Then
\beq\label{MT1-17}
\bt_1\approx_{\ep_1/4^3} {\rm Ad}\, V_2^*\circ \af\,\,\,{\rm on}\,\,\,\,\bt_1^{-1}({\cal G}_1)\cup {\cal F}_1'.
\eneq
By the last part of \ref{KishL1}, we choose $u_2, w_2\in U_0(A)$ such that
\beq\label{MT1-18}
V_2^*=\af(u_2^*)u_2w_2\andeqn \|w_2-1\|<\ep/4^2.
\eneq

Set $v_t^{(2)}=(v_{t+t_2}^{(1)})^*V_2^*$ and $\af_2={\rm Ad}\, w_2\circ {\rm Ad}\, u_2\circ \af\circ {\rm Ad}\, u_2^*.$
Then, by (\ref{MT1-17}),
\beq\label{MT1-n1}
\bt_1\approx_{\ep/4^3} \af_2\,\,\,{\rm on}\,\,\, {\cal F}_1'\cup \bt_1^{-1}({\cal G}_1).
\eneq
Moreover, by (\ref{MT1-15}),
\beq\label{MT1-19}
v_0^{(2)}=v_2',\,\,\,\bt_1=\lim_{t\to\infty} {\rm Ad}\, (v_t^{(2)})^*\circ \af_2\andeqn\\\label{MT1-19+0}
\bt_1\approx_{\ep_1/4^4} {\rm Ad}\, (v_t^{(2)})^*\circ \af_2\,\,\, {\rm on}\,\,\, \bt_1^{-1}({\cal G}_1)\cup {\cal F}_1'\tforal t\ge 0.
\eneq

Let ${\cal F}_2^{(0)}={\cal F}_1'\cup {\cal F}_2$ and
${\cal F}_2'={\cal F}_2^{(0)}\cup {\rm Ad}\, u_1({\cal F}_2^{(0)})\cup{\rm Ad}\, u_2({\cal F}_2^{(0)}).$

(So far we have not yet used the full strength of \ref{KishL1}. We will use it in the construction of $\bt_3,$ $w_3$ and $u_3$ which we have prepared.  However,
since the construction of $\af_4,$ $w_4$ and $u_4$ are intertwiningly
related to the construction of $\bt_3,$ $w_3$ and $v_3,$ we need to
prepare for the next usage of \ref{KishL1}.)

Let $\dt_2>0$ (in place of $\dt$), $\lambda_2>0$ (in place of $\lambda$), let ${\cal G}_2\subset A$ (in place of ${\cal G}$) be a finite
subset, let ${\cal P}_2\subset  \underline{K}(A)$ (in place of ${\cal P}$) be a finite subset and let $\{p_{2,1}, p_{2,2},...,p_{2,p(2)}\}$ (in place
of $\{p_1, p_2,...,p_n\}$) be a finite subset of projections in $A$ required by \ref{KishL1} for $\ep_1/4^{5}$(in place of $\ep$) and ${\cal F}_2'$\,(in place of ${\cal F}$).  We may assume that $
{\cal P}_1\cup\{[p_{2,j}]:1\le j\le p(2)\}\subset {\cal P}_2$ and ${\cal F}_2'\cup\{p_{2,j}: 1\le j\le p(2)\}\subset {\cal G}_2.$
We may assume that ${\cal G}_2\supset \af_2\circ \bt_1^{-1}({\cal G}_1)\cup {\cal F}_2'.$
Without loss of generality, we may also assume
that $p_{2,1}=1_A.$
Let $G_0^{(2)}$ be the subgroup of $K_0(A)$ generated by $\{[p_{2,j}]: 1\le j\le p(2)\}.$ We may assume that $\dt_2$ is sufficiently small  so that
$\Gamma([p_{2,j}])=\overline{\langle (1-p_{2,j})+p_{2,j}W\rangle}$ defines
a \hm\, from $G_0^{(2)}$ into $U_0(A)/CU(A),$  and ${\rm Bott}({\rm id}_A,\, W)|_{{\cal P}_2}$ is well-defined, where $W$ is any unitary
in  $A$ such that $\|[x, \, W]\|<\dt_2$ for all $x\in {\cal G}_2.$ Moreover, we may further assume that
\beq\label{MT1-19+1}
{\rm Bott}({\rm id}_A,\, W_1W_2)|_{{\cal P}_2}={\rm Bott}({\rm id}_A,\, W_1)|_{{\cal P}_2}
+{\rm Bott}({\rm id}_A,\, W_2)|_{{\cal P}_2}\andeqn\\\label{MT1-19+2}
\langle (1-p_{2,j})+p_{2,j}W_1W_2\rangle=\langle (1-p_{2,j})+p_{2,j}W_1\rangle \langle (1-p_{2,j})+p_{2,j}W_2\rangle,
\eneq
provided that
\beq\label{MT1-19+3}
\|[y,\, W_k]\|<\dt_2\tforal y\in {\cal G}_1,\,\,\, k=1,2,.
\eneq

Let $\ep_2=\min\{\ep_1,\dt_2/2,\lambda_2/2\}.$

 Since  $\af_2=\lim_{t\to\infty} {\rm Ad}\, v_t^{(2)}\circ \bt_1,$ we choose $t_3>0$ such that for $t\ge t_3,$
 \beq\label{MT1-20}
 \af_2\approx_{\ep_2/4^{6}} {\rm Ad}\, v_t^{(2)}\circ \bt_1\,\,\, {\rm on}\,\,\, \af_2^{-1}({\cal G}_2)\cup {\cal F}_2'.
 \eneq
% Note that $\af_2^{-1}({\cal G}_2\supset \bt^{-1}({\cal G}_1).$
 It follows from (\ref{MT1-n1}) and (\ref{MT1-19+0}), for $x\in \bt^{-1}({\cal G}_1),$
 \beq\label{MT1-21}
 \bt_1(x)\approx_{\ep_1/4^3} \af_2(x) \approx_{\ep_2/4^6} {\rm Ad}\, v_t^{(2)}(\bt_1(x))\tforal t\ge 0.
 \eneq
 In other words, for $t\ge 0,$
 \beq\label{MT1-22}
\| [y, v_t^{(2)}]\|<\dt_1/4^3\tforal y\in {\cal G}_1.
 \eneq
 Note that $v_0^{(2)}=v_2'.$  Therefore, by (\ref{MT1-16+1}) and (\ref{MT1-22}),
 \beq\label{MT1-22+1}
 {\rm Bott}({\rm id}_A,\, v_{t_3}^{(2)})|_{{\cal P}_1}={\rm Bott}({\rm id}_A,\, v_2')|_{{\cal P}_1}=0.
 \eneq
 By the choices of $\dt_1$ and ${\cal G}_1,$ let
 $\Gamma_1: G_0^{(1)}\to U_0(A)/CU(A)$ be the \hm\, defined by
 $\Gamma_1([p_{1,j}])=\overline{\langle (1-p_{1,j})+p_{1,j}v_{t_3}^{(2)}\rangle},$ $j=1,2,...,p(1).$
 Then, since $\{[p_{1,j}]: 1\le j\le p(1)\}\subset {\cal P}_1,$ by (\ref{MT1-22+1}),
 $\Gamma_1$ maps $G_0^{(1)}$ into $U_0(A)/CU(A).$
 It follows from \ref{Ext1} that there is a unitary $v_3'\in U(A)$ such that
 \beq\label{MT1-23}
&& \|[y, \, v_3']\|<\ep_2/4^6\tforal y\in \af_2^{-1}({\cal G}_2)\cup {\cal F}_2' \cup \bt_1(\af_2^{-1}({\cal G}_2)\cup {\cal F}_2' ),\\\label{MT1-23+1}
&& {\rm Bott}({\rm id}_A,\, v_3')|_{{\cal P}_2}=0\andeqn\\\label{MT1-23+2}
 &&{\rm dist}(\overline{\langle (1-p_{1,j})+p_{1,j}(v_3')^*\rangle},\, \Gamma_1([p_{1,j}]))<\lambda_1/2,\,\,\, j=1,2,...,p(1).
 \eneq
Define $V_3=v_3'v_{t_3}^{(2)}.$
Since ${\cal G}_1\subset \bt_1(\af_2^{-1}({\cal G}_2)),$ then
\beq\label{MT1-24}
\af_2\approx_{\ep_2/4^5} {\rm Ad}\, V_3\circ \bt_1\,\,\, {\rm on}\,\,\,\af_2^{-1}({\cal G}_2)\cup {\cal F}_2'\andeqn\\\label{MT1-24+1}
\|[y,\, V_3]\|<\dt_1/2\tforal y\in {\cal G}_1.
\eneq
Let $z_t^{(3)}=v_3'v_{t}^{(2)}.$  Then $z_{t_3}^{(3)}=V_3.$ By (\ref{MT1-22}) and (\ref{MT1-23}),
\beq\label{MT1-25}
\|[y,\, z_t^{(3)}]\|<\dt_1/4\rforal y\in {\cal G}_1\andeqn\,\,\, \rforal t\ge 0.
\eneq
Since $z_0^{(3)}=v_3'v_0^{(2)}=v_3'v_2',$ by (\ref{MT1-25}), (\ref{MT1-14+2}), (\ref{MT1-16+1}) and (\ref{MT1-23+1}),
\beq\label{MT1-26}
{\rm Bott}({\rm id}_A,\, V_3)|_{{\cal P}_1}&=&{\rm Bott}({\rm id}_A,\, z_t^{(3)})|_{{\cal P}_1}
={\rm Bott}({\rm id}_A,\, v_3'v_2')|_{{\cal P}_1}\\ \label{MT1-26+1}
&=&{\rm Bott}({\rm id}_A,\, v_3')|_{{\cal P}_1}
+{\rm Bott}({\rm id}_A,\, v_2')|_{{\cal P}_1}=0.
\eneq
Furthermore, by (\ref{MT1-14+3}),
\beq\label{MT1-27}
\overline{\langle (1-p_{1,j})+p_{1,j}V_3\rangle}&=&\overline{\langle (1-p_{1,j})+p_{1,j}v_3'v_{t_3}^{(2)}\rangle}\\
&=& \overline{\langle (1-p_{1,j})+p_{1,j}v_3'\rangle\langle (1-p_{1,j})+p_{1,j}v_{t_3}^{(2)}\rangle}\\
&=& \overline{\langle (1-p_{1,j})+p_{1,j}v_3'\rangle}\Gamma_1([p_{1,j}]).
\eneq
It follows from (\ref{MT1-23+2}) that
\beq\label{MT1-28}
{\rm dist}(\langle (1-p_{1,j})+p_{1,j}V_3\rangle, CU(A))<\lambda_1,\,\,\, j=1,2,...,p(1).
\eneq
It follows from (\ref{MT1-24+1}), (\ref{MT1-26+1}) and (\ref{MT1-28}),  and by the choice of ${\cal G}_1,$
${\cal P}_1,$ $\dt_1,$ $\lambda_1$ and $\{p_{1,j}: 1\le j\le p(1)\},$ by applying \ref{KishL1},   there are  unitaries $u_3, w_3\in U(A)$ and
a continuous path of unitaries $\{u_3(t): t\in [0,1]\}$ such that
$u_3(0)=u_3,$ $u_3(1)=1_A,$
\beq\label{MT1-29}
V_3=\bt_1(u_3^*)u_3w_3,\,\,\, \|w_3-1\|<\ep/4^3\andeqn \|x-{\rm Ad}\, u_3(t)(x)\|<\ep/4^3\tforal x\in {\cal F}_1'
\eneq
and $t\in [0,1].$
Set $v_t^{(3)}=v_{t+t_3}^{(2)}V_3^*$ and $\bt_3={\rm Ad}\, w_3\circ {\rm Ad}\, u_3 \circ \bt_1\circ {\rm Ad}\, u_3^*.$
Then, by (\ref{MT1-24}),
\beq\label{MT1-n2}
\af_2\approx_{\ep/4^5} \bt_3\,\,\, {\rm on}\,\,\, {\cal F}_2'.
\eneq
Moreover, by (\ref{MT1-20}),
\beq\label{MT1-30}
v_0^{(3)}=v_3',\,\,\,\af_2=\lim_{t\to\infty} {\rm Ad}\, (v_t^{(3)})\circ \bt_3\andeqn\\\label{MT1-30+1}
\af_2\approx_{\ep_2/4^6} {\rm Ad}\, (v_t^{(3)})\circ \bt_3\,\,\, {\rm on}\,\,\, \af_2^{-1}({\cal G}_2)\cup {\cal F}_2'\tforal t\ge 0.
\eneq

We now construct  $\af_4,$ $w_4$ and $u_4.$
To make sure that the process continues, we will also prepare the next step.
Let
$
{\cal F}_3^{(0)}={\cal F}_2'\cup {\cal F}_3.
$ and let
$${\cal F}_3'={\cal F}_3^{(0)}\cup {\rm Ad}\, u_1({\cal F}_3^{(0)})\cup {\rm Ad}\, u_2({\cal F}_3^{(0)})\cup {\rm Ad}\,u_3 ({\cal F}_3^{(0)})
\cup{\rm Ad}\, u_3u_1({\cal F}_3^{(0)}).
$$

Let $\dt_3>0$ (in place of $\dt$), $\lambda_3>0$ (in place of $\lambda$), let ${\cal G}_3\subset A$ (in place of ${\cal G}$) be a finite
subset, let ${\cal P}_3\subset \underline{K}(A)$ (in place of ${\cal P}$) be a finite subset and let $\{p_{3,1}, p_{3,2},...,p_{3,p(3)}\}$ (in place
of $\{p_1, p_2,...,p_n\}$) be a finite subset of projections in  $A$ required by \ref{KishL1} for $\ep/4^{7}$\,(in place of $\ep$) and ${\cal F}_3'$\,(in place of ${\cal F}$). We may assume that $
{\cal P}_2\cup\{[p_{3,j}]:1\le j\le p(3)\}\subset {\cal P}_3$ and ${\cal F}_3'\cup\{p_{3,j}: 1\le j\le p(3)\}\subset {\cal G}_3.$
We may assume that ${\cal G}_3\supset \bt_3\circ \af_2^{-1}({\cal G}_2)\cup {\cal F}_2'.$
Without loss of generality, we may also assume
that $p_{3,1}=1_A.$
Let $G_0^{(3)}$ be the subgroup of $K_0(A)$ generated by $\{[p_{3,j}]: 1\le j\le p(3)\}.$ We may assume that $\dt_3$ is sufficiently small  so that
$\Gamma([p_{3,j}])=\overline{\langle (1-p_{3,j})+p_{3,j}W\rangle}$ defines
a \hm\, from $G_0^{(3)}$ into $U_0(A)/CU(A),$  and ${\rm Bott}({\rm id}_A,\, W)|_{{\cal P}_3}$ is well-defined, where $W$ is any unitary
in  $A$ such that $\|[x, \, W]\|<\dt_3$ for all $x\in {\cal G}_3.$ Moreover, we may further assume that
\beq\label{MT1-31}
{\rm Bott}({\rm id}_A,\, W_1W_2)|_{{\cal P}_3}={\rm Bott}({\rm id}_A,\, W_1)|_{{\cal P}_3}
+{\rm Bott}({\rm id}_A,\, W_2)|_{{\cal P}_3}\andeqn\\\label{MT1-31+1}
\langle (1-p_{3,j})+p_{3,j}W_1W_2\rangle=\langle (1-p_{3,j})+p_{3,j}W_1\rangle \langle (1-p_{3,j})+p_{3,j}W_2\rangle,
\eneq
provided that
\beq\label{MT1-32}
\|[y,\, W_k]\|<\dt_3\tforal y\in {\cal G}_3,\,\,\, k=1,2,.
\eneq

Let $\ep_3=\min\{\ep_2,\dt_3/2,\lambda_3/2\}.$

 Since  $\bt_3=\lim_{t\to\infty} {\rm Ad}\, (v_t^{(3)})^*\circ \af_2,$ we choose $t_4>0$ such that for $t\ge t_4,$
 \beq\label{MT1-33}
 \bt_3\approx_{\ep_2/4^{8}} {\rm Ad}\, (v_t^{(3)})^*\circ \af_2\,\,\, {\rm on}\,\,\, \bt_3^{-1}({\cal G}_3)\cup {\cal F}_3'.
 \eneq
 %Note that $\af_2^{-1}({\cal G}_2\supset \bt^{-1}({\cal G}_1).$
 It follows from (\ref{MT1-24}) and (\ref{MT1-30+1}), for $x\in \af_2^{-1}({\cal G}_2),$
 \beq\label{MT1-34}
 \af_2(x)\approx_{\ep_2/4^5} \bt_3(x) \approx_{\ep_2/4^6} {\rm Ad}\, (v_t^{(3)})^*(\af_2(x))\tforal t\ge 0
 \eneq
 In other words, for $t\ge 0,$
 \beq\label{MT1-35}
\| [y, v_t^{(3)}]\|<\dt_2/4^5\tforal y\in {\cal G}_2.
 \eneq
 Note that $v_0^{(3)}=(v_3')^*.$  Therefore, by (\ref{MT1-35}) and (\ref{MT1-23+2}),
 \beq\label{MT1-36}
 {\rm Bott}({\rm id}_A,\, v_{t_4}^{(3)})|_{{\cal P}_2}={\rm Bott}({\rm id}_A,\, (v_3')^*)|_{{\cal P}_2}=0.
 \eneq
 By the choices of $\dt_2$ and ${\cal G}_2,$ let
 $\Gamma_2: G_0^{(2)}\to U_0(A)/CU(A)$ be the \hm\, defined by
 $\Gamma_2([p_{2,j}])=\overline{\langle (1-p_{2,j})+p_{2,j}v_{t_4}^{(3)}\rangle},$ $j=1,2,...,p(2).$
 Then, since $\{[p_{2,j}]: 1\le j\le p(2)\}\subset {\cal P}_2,$ by (\ref{MT1-36}),
 $\Gamma_2$ maps $G_0^{(2)}$ into $U_0(A)/CU(A).$
 It follows from \ref{Ext1} that there is a unitary $v_4'\in U(A)$ such that
 \beq\label{MT1-37}
&& \|[y, \, v_4']\|<\ep_2/4^8\tforal y\in \af_2(\bt_3^{-1}({\cal G}_3))\cup {\cal F}_3'  ),\\\label{MT1-37+1}
&& {\rm Bott}({\rm id}_A,\, v_4')|_{{\cal P}_2}=0\andeqn\\\label{MT1-37+2}
 &&{\rm dist}(\overline{\langle (1-p_{2,j})+p_{1,j}(v_4')^*\rangle},\, \Gamma_2([p_{2,j}]))<\lambda_2/2,\,\,\, j=1,2,...,p(1).
 \eneq
Define $V_4=v_{t_4}^{(3)}v_4'.$
Since ${\cal G}_2\subset \af_2(\bt_3^{-1}({\cal G}_3)),$ then
\beq\label{MT1-38}
\bt_3\approx_{\ep_2/4^7} {\rm Ad}\, V_4^*\circ \af_2\,\,\, {\rm on}\,\,\,\bt_3^{-1}({\cal G}_3)\cup {\cal F}_3'\andeqn\\\label{MT1-38+1}
\|[y,\, V_4]\|<\dt_2/2\tforal y\in {\cal G}_2.
\eneq
Let $z_t^{(4)}=v_{t}^{(3)}v_4'.$ Then $z_{t_4}^{(4)}=V_4.$ By (\ref{MT1-37}) and (\ref{MT1-35}),
\beq\label{MT1-39}
\|[y,\, z_t^{(4)}]\|<\dt_2/4^2\rforal y\in {\cal G}_2\andeqn\,\,\, \rforal t\ge 0.
\eneq
Since $z_0^{(4)}=v_0^{(3)}v_4'=(v_3')^*v_4',$ by (\ref{MT1-39}),(\ref{MT1-19+1}), (\ref{MT1-23+1}) and (\ref{MT1-37+1}),
\beq\label{MT1-40}
{\rm Bott}({\rm id}_A,\, V_4)|_{{\cal P}_2}&=&{\rm Bott}({\rm id}_A,\, z_t^{(4)})|_{{\cal P}_2}={\rm Bott}({\rm id}_A,\, v_3'v_4')|_{{\cal P}_2}\\\label{MT1-40+1}
&=&{\rm Bott}({\rm id}_A,\, v_3')|_{{\cal P}_2}
+{\rm Bott}({\rm id}_A,\, v_4')|_{{\cal P}_2}=0.
\eneq
Furthermore, by (\ref{MT1-19+2}),
\beq\label{MT1-41}
\overline{\langle (1-p_{2,j})+p_{2,j}V_4\rangle}&=&\overline{\langle (1-p_{2,j})+p_{2,j}v_{t_4}^{(3)}v_4'\rangle}\\
&=& \overline{\langle (1-p_{2,j})+p_{2,j}v_{t_4}^{(3)}\rangle\langle (1-p_{2,j})+p_{2,j}v_4'\rangle}\\\label{MT1-41+}
&=& \Gamma_2([p_{2,j}])\overline{\langle (1-p_{2,j})+p_{2,j}v_4'\rangle}.
\eneq
It follows from (\ref{MT1-37+2}) that
\beq\label{MT1-42}
{\rm dist}(\langle (1-p_{2,j})+p_{2,j}V_4\rangle, CU(A))<\lambda_2,\,\,\, j=1,2,...,p(2).
\eneq
It follows from (\ref{MT1-39}), (\ref{MT1-40+1}) and (\ref{MT1-42}),  and by the choice of ${\cal G}_2,$
${\cal P}_2,$ $\dt_2,$ $\lambda_2$ and $\{p_{2,j}: 1\le j\le p(2)\},$ by applying \ref{KishL1},   there are  unitaries $u_4, w_4\in U(A)$  and
a continuous path of unitaries $\{u_4(t): t\in [0,1]\}$ in $A$ such that
$u_4(0)=u_4$ and $u_4(1)=1_A,$
\beq\label{MT1-43}
V_4^*=\af_2(u_4^*)u_4w_4,\,\,\, \|w_4-1\|<\ep/4^4\andeqn \|x-{\rm Ad}\, u_4(t)(x)\|<\ep/4^5\tforal x\in {\cal F}_2'
\eneq
and $t\in [0,1].$
Set $v_t^{(4)}=(v_{t+t_4}^{(3)})V_4^*$ and $\af_4={\rm Ad}\, w_4\circ u_4 \circ \af_2\circ {\rm Ad}\, u_4^*.$
Then, by (\ref{MT1-38}),
\beq\label{MT1-n3}
\bt_3\approx_{\ep/4^7} \af_4\,\,\,{\rm on}\,\,\,{\cal F}_3'\cup\bt_3^{-1}({\cal G}_3).
\eneq
Then, by (\ref{MT1-33}),
\beq\label{MT1-44}
v_0^{(4)}=v_4',\,\,\,\bt_3=\lim_{t\to\infty} {\rm Ad}\, (v_t^{(4)})^*\circ \af_4\andeqn\\\label{MT1-44+1}
\bt_3\approx_{\ep_2/4^8} {\rm Ad}\, (v_t^{(4)})^*\af_4\,\,\, {\rm on}\,\,\, \bt_3^{-1}({\cal G}_3)\cup {\cal F}_3'\tforal t\ge 0.
\eneq

We repeat this and obtain
\beq\label{MT1-45}
\bt_1, \af_2, \bt_3, \af_4,...,\\
u_1,u_2, u_3(t), u_4(t), u_5(t), u_6(t)...,\\
w_1,w_2,w_3, w_4...,\\
{\cal F}_1', {\cal F}_2', {\cal F}_3', {\cal F}_4',...
\eneq
which satisfy the following:

(i) $\|[x, \, u_i]\|<\ep/4^{i+1},\,\,\,i=1,2,\,\,\, \|[x,\, u_n(t)]\|<\ep/4^{n+1}\tforal x\in {\cal F}_n'$ and $t\in [0,1],$ $n=3,4,...,$

(ii) $\|w_n-1\|<\ep/4^n,$ $n=1,2,...,$

(iii) $\af_{2k}={\rm Ad}\,w_{2k}\circ {\rm Ad}\, u_{2k}\circ \af_{2k-2}\circ {\rm Ad}\, u_{2k}^*,$ $k=1, 2,3,...,$ (with $\af_0=\af$),

(iv) $\bt_{2k+1}={\rm Ad}\, w_{2k+1} \circ {\rm Ad}\, u_{2k+1}\circ \bt_{2k-1}\circ {\rm Ad}\, u_{2k+1}^*,$ $k=0,1,2,...,$ with $\bt_{-1}=\bt,$

(v) $\af_{2n}\approx_{\ep/4^{2n+1}}\bt_{2n+1}\,\,\,{\rm on}\,\,\,{\cal F}_{2n}',$ $n=1,2,...,$

(vi) $\bt_{2n-1}\approx_{\ep/4^{2n+1}}\af_{2n}\,\,\,{\rm on}\,\,\,{\cal F}_{2n-1}',$ $n=1,2,....$

(vii) ${\cal F}_1'={\cal F}_1\cup {\rm Ad}\, u_1\circ ({\cal F}_1),$
  ${\cal F}_2'={\cal F}_2^{(0)}\cup {\rm Ad}\, u_1({\cal F}_2^{(0)})\cup
  {\rm Ad}\, u_2({\cal F}_2^{(0)})$
  $$
  {\cal F}_{2n-1}'={\cal F}_{2n-1}^{(0)}\cup \cup_{j=1}^{2n-1}{\rm Ad}\, u_j({\cal F}_{2n-1}^{(0)})\cup\cup_{j=2}^{n} {\rm Ad}\, u_{2j-1}u_{2j-3}\cdots u_1({\cal F}_{2n-1}^{(0)})
  $$
  $$
  {\cal F}_{2n}'={\cal F}_{2n}^{(0)}\cup \cup_{j=1}^{2n}{\rm Ad} u_j({\cal F}_{2n}^{(0)})\cup \cup_{j=2}^n{\rm Ad}\, u_{2j}u_{2(j-1)}\cdots u_2({\cal F}_{2n}^{(0)}),
  $$
  with ${\cal F}_n^{(0)}={\cal F}_{n-1}'\cup{\cal F}_n,$ $n=1, 2,....$

Define $\gamma_{k,0},\gamma_{k,1}\in \Aut(A)$ by
\beq\label{MT1-46}
\gamma_{k,0}={\rm Ad}\, u_{2k} u_{2(k-1)}\cdots u_2\andeqn \gamma_{k,1}={\rm Ad}\, u_{2k+1}u_{2k-1}\cdots u_1.
\eneq
Fix $k_0\ge 1,$ if $k\ge k_0,$
\beq\label{MT1-46+1}
\gamma_{k,0}({\cal F}_{k_0})\subset {\cal F}_{k+1}'.
\eneq
Since $\|[x,\, u_n]\|<\ep/4^n$ for $x\in {\cal F}_n'$ and $0<\ep<1,$
we have if $m>k,$
\beq\label{MT1-47}
\|\gamma_{m,0}(x)-\gamma_{k,0}(x)\|\le \sum_{j=k}^m \ep/4^j<\ep/4^{k-1}
\eneq
for all $x\in {\cal F}_{k_0}.$  Since $\|\gamma_{n,0}\|$ is bounded and
$\cup_{k=1}^{\infty}{\cal F}_k$ is dense in the unit ball, it follows that
$\{\gamma_{k,0}(x)\}$ is Cauchy for each $x\in A.$
This gives a linear map $\gamma_0$ on $A$ such that
 $$
 \gamma_0(x)=\lim_{n\to\infty}\gamma_{n,0}(x)\tforal x\in A.
 $$
Since each $\gamma_{n,0}$ is an automorphism, $\gamma_0$ is a unital  injective \hm. Exactly the same reason shows that  $\gamma_{k,0}^{-1}$ converges point-wisely to another
unital injective \hm\, $\gamma_0'.$ It is easy to check that
$\gamma_0'=\gamma_0^{-1}.$ So $\gamma_0$ is an automorphism.

Define
$\{W(t): t\in [2, \infty)\}$ as follows:
$$
W(2)=u_2,\,\,\,W(t)=u_{2k}(k+1-t)u_{2(k-1)}u_{2(k-2)}\cdots u_2\tforal t\in (k, k+1],\,\,\,k=2,3,....
$$
Note $\{W(t): t\in [1, \infty)\}$ is a continuous path of unitaries in $A$ with $W(2)=u_2\in U_0(A).$ Since $u_2\in U_0(A),$ we may assume that $W(t)$ defined on $[1, \infty),$ with $W(1)=1_A.$
Then
\beq\label{MT1-48-1}
\gamma_0(x)=\lim_{t\to\infty} {\rm Ad}\, W(t) (x)\tforal x\in A.
\eneq
Therefore $\gamma_0$ is strongly asymptotically inner.
Similarly, $\gamma_1(x)=\lim_{n\to\infty}\gamma_{n,1}(x)$ for all $x\in A$ defines another strongly asymptotically inner automorphism on $A.$

Define $W_{2,0}=w_2$  and
\beq\label{MT1-48}
W_{2k,0}=w_{2k}u_{2k}W_{2k-2,0}u_{2k}^*,\,\,\,k=2,3,....
\eneq
We estimate that
\beq\label{MT1-49}
\|W_{2k,0}-1\| &\le & \|w_{2k}-1\|+\|u_{2k}W_{2k-2}u_{2k}^*-1\|\\
 &<& \ep/4^{2k}+\|W_{2k-2}-1\|.
\eneq
It follows from the induction that
\beq\label{MT1-50}
\|W_{2k,0}-1\|<\sum_{j=1}^k \ep/4^{2j},\,\,\,k=1,2,....
\eneq
It follows that $\lim_{k\to\infty} W_{2k,0}=W_0$ is a unitary in $A$ with
\beq\label{MT1-51}
\|W_0-1\|<\ep/2.
\eneq
Define $W_{1,1}=w_1$ and
\beq\label{MT1-52}
W_{2k+1,1}=w_{2k+1}u_{2k+1}W_{2k-1,1}u_{2k+1}^*,\,\,\,k=1,2,....
\eneq
As above, $W_1=\lim_{k\to\infty} W_{2k+1,1}$ is a unitary in $A$ with
\beq\label{MT1-53}
\|W_1-1\|<\ep/2.
\eneq

From (iii) above,
\beq\label{MT1-54}
\af_{2k}&=&{\rm Ad}\, w_{2k}\circ {\rm Ad}\, u_{2k}\circ \af_{2k-2}\circ {\rm Ad}\, u_{2k}^*\\\label{MT1-54+1}
&=&{\rm Ad} W_{2k,0} \circ \gamma_{k,0}\circ \af\circ \gamma_{k,0}^{-1}.
\eneq
Similarly, by (iv),
\beq\label{MT1-55}
\bt_{2k+1}&=& {\rm Ad}\, w_{2k+1}\circ  {\rm Ad}\,  u_{2k+1}\circ \bt_{2k-1}\circ {\rm Ad}\, u_{2k-1}^*\\\label{MT1-55+1}
&=& {\rm Ad} W_{2k+1,1}\circ \gamma_{k,1}\circ \bt\circ \gamma_{k,1}^{-1}.
\eneq
It follows from (v) and (vi) that the sequence
$\bt_1, \af_2,\bt_3,...$ converges point-wise. By (\ref{MT1-54}) and
(\ref{MT1-55+1}),
the limit of the sequence is
\beq\label{MT1-56}
{\rm Ad}\, W_0\circ \gamma_0\circ \af\circ \gamma_0^{-1}=
{\rm Ad}\, W_1\circ \gamma_1\circ \bt\circ \gamma_1^{-1}.
\eneq
Therefore
\beq\label{MT1-57}
\af&=&\gamma_0^{-1}\circ {\rm Ad}\, W_0^*\circ {\rm Ad}\, W_1\circ \gamma_1\circ \bt\circ \gamma_1^{-1}\circ \gamma_0\\
&=& {\rm Ad}(\gamma_0^{-1}(W_0^*W_1))\circ \gamma_0^{-1}\circ \gamma_1\circ \bt\circ \gamma_1^{-1}\circ \gamma_0.
\eneq
Note that, since $\gamma_0$ and $\gamma_1$ are strongly asymptotically inner,
so is $\gamma_1^{-1}\circ \gamma_0.$
%Moreover,
%if both $\gamma_0$ and $\gamma_1$ are strong asymptotically inner,
%so is $\gamma_1^{-1}\circ \gamma_0.$
Finally, let $w=\gamma_0^{-1}(W_0^*W_1),$
$\sigma=\gamma_0^{-1}\circ \gamma_1.$ Then
$\sigma^{-1}=\gamma_1^{-1}\circ \gamma_0$ and
\beq\label{MT1-58}
\|w-1\|=\|\gamma_0^{-1}(W_0^*W_1)-1\|=\|W_0^*W_1-1\|<\ep/2+\ep/2=\ep.
\eneq
 Moreover,
\beq\nonumber
\af={\rm Ad}\, w\circ \sigma\circ \bt\circ  \sigma^{-1}.
\eneq
The theorem follows.
\end{proof}

\section{Projections in $M_n\otimes {\cal Z}$ }

The following is a refinement of Theorem 5.4 of \cite{Lntr=1}.

\begin{lem}\label{appdiv}
Let $A$ be a unital simple \CA\, with $TR(A)\le 1.$
Let $\ep>0,$ $\eta>0,$ ${\cal F}\subset A$ be a finite subset,
$N\ge 1$ be an integer.
There is a projection $p\in A$ and a finite dimensional
\SCA\, $B\cong M_N$
with $1_B=p$ and a finite subset ${\cal F}_1\subset A$ such that
\beq\label{appdiv-1}
\|[p,\,x]\|<\ep\tforal x\in {\cal F},\,\,\,yb=by\tforal y\in {\cal F}_1\andeqn b\in B,\\
pxp\in_{\ep} {\cal F}_1\tforal x\in {\cal F}\andeqn
\tau(1-p)<\eta\tforal \tau\in T(A).
\eneq

\end{lem}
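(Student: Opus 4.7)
The plan is to refine Theorem 5.4 of \cite{Lntr=1} in two directions: collapse the direct-sum finite-dimensional \SCA\ $\bigoplus_j M_{r_j}$ that tracial approximate divisibility produces down to a single matrix block $M_N$, and extract the relative-commutant subset ${\cal F}_1$ by averaging compressions $pxp$ over the unitary group of this new $M_N$.

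First, I would fix auxiliary parameters $\ep'=\ep/4$, $\eta'=\eta/2$, and an integer $N'\ge 2N/\eta$, and apply Theorem 5.4 of \cite{Lntr=1} with these inputs and the given ${\cal F}$. This yields a projection $p'\in A$ together with a unital \SCA\ $C=\bigoplus_{j=1}^J M_{r_j}\subset p'Ap'$ with $r_j\ge N'$, satisfying $\|[p',x]\|<\ep'$ for every $x\in{\cal F}$, $\|[p'xp',c]\|<\ep'$ for every $c\in C$ in the unit ball, and $\tau(1-p')<\eta/2$ for every $\tau\in T(A)$ (the trace bound is obtained by picking the Cuntz-dominating element in Definition \ref{dappdiv} with small tracial weight, which is possible because $A$ is simple). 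To carve out a single block, write $r_j=Nk_j+s_j$ with $0\le s_j<N$, and in each $M_{r_j}$ fix matrix units $\{e^{(j)}_{\alpha\beta}\}_{\alpha,\beta=1}^{N}$ generating a copy of $M_N$ that is unital in a rank-$Nk_j$ corner $p_j M_{r_j} p_j$. Set $p=\sum_j p_j\le p'$ and
$$
B=\Big\{\sum_{j,\alpha,\beta} a_{\alpha\beta}\,e^{(j)}_{\alpha\beta}:(a_{\alpha\beta})\in M_N\Big\}\cong M_N,
$$
so $B\subset C$ is a diagonal embedding with $1_B=p$. The trace loss is controlled by $\tau(p'-p)\le (N/N')\tau(p')\le\eta/2$, hence $\tau(1-p)<\eta$.

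Next, I would verify the commutator and approximation bounds and define ${\cal F}_1$. Decomposing $x\in{\cal F}$ into its four $p'$-corners, the off-diagonal pieces $p'x(1-p')=[p',x(1-p')]$ and $(1-p')xp'$ each have norm at most $\ep'$, the $(1-p')$-diagonal block commutes with $p$ since $p\le p'$, and $\|[p,p'xp']\|<\ep'$ because $p\in C$ lies in the unit ball; summing gives $\|[p,x]\|<3\ep'<\ep$. The analogous estimate for $b$ in the unit ball of $B\subset C$, combined with the identity $[pxp,b]=p[x,b]p$ (valid because $p=1_B$ commutes with every $b\in B$), yields $\|[pxp,b]\|\le 3\ep'$ uniformly in $b$. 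Letting $E\colon pAp\to B'\cap pAp$ denote the conditional expectation $E(a)=\int_{U(B)}uau^*\,du$ with respect to Haar measure on $U(B)\cong U(N)$, one obtains $\|E(pxp)-pxp\|\le 3\ep'<\ep$, so setting ${\cal F}_1:=\{E(pxp):x\in{\cal F}\}\subset B'\cap pAp\subset A$ produces a finite subset whose elements commute with $B$ and which $\ep$-approximates each $pxp$. The only subtle point is the careful propagation of commutator constants through the two successive refinements (from $C$ down to $B$, and from $p'$ down to $p$): each passage inflates the basic bound by a bounded factor, so one must choose $\ep'$ a fixed fraction of $\ep$ and $N'$ of order $N/\eta$ at the very outset; once those parameters are fixed, the rest is routine.
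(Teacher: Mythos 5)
Your proposal is correct and follows essentially the same route as the paper: both apply tracial approximate divisibility (Theorem 5.4 of \cite{Lntr=1}) with blocks of size at least roughly $N/\eta$, carve out a corner $p\le p'$ of almost full trace supporting a unital copy $B\cong M_N$, and then replace $pxp$ by a nearby element of the relative commutant, with the same bookkeeping of commutator constants. The only divergence is the final step: you use the Haar-average conditional expectation $E(a)=\int_{U(B)}uau^*\,du$ onto $B'\cap pAp$, whereas the paper uses the diagonal compression $\Phi(pxp)=\sum_i e_{i,i}\,pxp\,e_{i,i}$ and asserts it commutes with all of $M_N\otimes\C\cdot 1_{C_1}$ (as written this gives exact commutation only with the diagonal of $M_N$), so your averaging is in fact the cleaner and fully rigorous way to land ${\cal F}_1$ exactly in the commutant while keeping the same $\ep$-estimate.
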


\begin{proof}
Let $N_1$ be an integer such that $N/N_1<\eta/4.$  Without loss of generality, to simplify the notation, we may
assume that ${\cal F}$ is in the unit ball of $A.$
Since $TR(A)\le 1,$ as (in the proof of) Theorem 5.4 of \cite{Lntr=1},
there exists a projection $p_1\in A$ and a finite dimensional \SCA\, $C\cong \bigoplus_{j=1}^JM_{r(j)}$
with $r(j)\ge N\cdot N_1$ and with $1_{C}=p_1$ such that
\beq\label{appdiv-2}
\|[p_1,\, x]\|<\ep/16 (N+1)^2\rforal x\in {\cal F}, \tau(1-p)<\eta/4\rforal \tau\in T(A),\\\label{appdiv-2+1}
\|[p_1xp_1,\,c]\|<\ep/16(N+1)^2 \rforal x\in {\cal F}\andeqn c\in C\,\,\,{\rm with}\,\,\, \|c\|\le 1.
\eneq
Write $r(j)=R_jN+r_j,$ where $R_j\ge N_1$ and $0\le r_j<N$ are integers, $j=1,2,...,J.$
Thus $C$ has a projection $p$ such that
$pCp\cong M_N\otimes C_1,$ where $C_1$ is a finite dimensional \SCA\, and
\beq\label{appdiv-3}
t(p)>1-\eta/4\rforal t\in T(C).
\eneq
Since $p\in C,$ by (\ref{appdiv-2}) and (\ref{appdiv-2+1}),
\beq\label{appdiv-3+1}
\|[p,\, x]\|<\ep/8(N+1)^2 \tforal x\in {\cal F}\andeqn\\\label{appdiv-3+2}
\|[pxp,\,c]\|<\ep/8(N+1)^2\tforal x\in {\cal F}\andeqn \tforal c\in pCp.
\eneq
%For each $x\in {\cal F},$ let $c_x\in pCp$ with $\|c\|\le \|x\|$ such that
%\beq\label{appdiv-4}
%\|c_x-pxp\|<\eta/8(N+1)^2.
%\eneq
Let $\{e'_{i,j}: 1\le i,j\le N\}$ be a matrix unit for $M_N$ and let $e_{i,j}=e'_{i,j}\otimes 1_{C_1}.$
%There is, for each $i,$ an element $a_i(x)\in C_1$ such that
%\beq\label{appdiv-5-1}
%e_{i,i}\otimes a_i(x)=e_{i,i}c_xe_{i,i}.
%\eneq
Define, for each $x\in {\cal F},$
\beq\label{appdiv-5}
\Phi(pxp)=\sum_{i=1}^Ne_{i,i}pxpe_{i,i}.
%1_{M_N}\otimes  (1/N)\sum_{i=1}^Na_i(x)\rforal x\in {\cal F}.
\eneq
By (\ref{appdiv-3+2}), we estimate that
\beq\label{appdiv-5+1}
\|\Phi(pxp)-pxp\|<N(N-1)\ep/8(N+1)^2<\ep/8
\eneq
We also have
\beq\label{appdiv-7}
\Phi(pxp)b=b\Phi(pxp)\rforal x\in {\cal F}\andeqn b\in M_N\otimes \C \cdot 1_{C_1}\cong M_N.
\eneq
Now let ${\cal F}_1=\{\Phi(pxp): x\in {\cal F}\}.$ Then
\beq\label{appdiv-8}
pxp\in_{\ep} {\cal F}_1.
\eneq
Finally, we note that (\ref{appdiv-3}) and (\ref{appdiv-2+1}) imply
that
$$
\tau(p)>1-\eta\tforal \tau\in T(A).
$$

%Note that
%\beq\label{appdiv-5+2}
%&&e_{i,j}(\sum_{i=1}^Ne_{i,i}c_xe_{i,i})=e_{i,j}c_xe_{j,j}=e_{i,j}\otimes a_j(x)\andeqn\\\label{appdiv-5+2+}
%&&(\sum_{i=1}^Ne_{i,i}c_xe_{i,i})e_{i,j}=e_{i,i}c_xe_{i,j}=e_{i,j}\otimes a_i(x),\,\,\, 1\le i,j\le N.
%\eneq
%By (\ref{appdiv-5+1}), (\ref{appdiv-5+2}), (\ref{appdiv-5+2+})  and (\ref{appdiv-3+2}),
%\beq\label{appdiv-5+3}
%\|a_i(x)-a_j(x)\|<2N\eta/8(N+1)^2+\|e_{i,j}c_x-c_xe_{i,j}\|<{(2N+1)\eta\over{8(N+1)^2}},
%\eneq
%$1\le i,j\le N.$
%It follows that
%\beq\label{appdiv-6}
%\Phi(pxp)\approx_{(2N+1)\eta/8(N+1)^2} (1/N)\sum_{i=1}^N e_{i,i}c_xe_{i,i}\approx_{N\eta/8(N+1)^2} c_x.
%\eneq
\end{proof}

Note that the lemma holds for unital simple \CA s which are tracially approximately divisible.

\begin{lem}\label{Extpro}
Let $a\in {\cal Z}$ be a non-zero element with $0\le a\le 1.$
For any $\ep>0$ and $1/2>\eta>0,$   there is an integer $N\ge 1$ satisfying the following:
If $n\ge N,$ there is a projection $E\in M_n$ and a projection $p\in A=EM_nE\otimes {\cal Z}$ such that
\beq\label{Extp-1}
{{\rm rank} E\over{n}}>1-\ep,\,\,\,\|ph_{\eta}(E\otimes a)-h_{\eta}(E\otimes a)\|<\ep,\\
p\in \overline{(1\otimes a)A(1\otimes a)}\andeqn
(t\otimes \tau)(p)>d_\tau(a)-\ep,
\eneq
where $\tau$ is the unique tracial state on ${\cal Z}$ and
$t$ is the normalized trace on $M_n.$
\end{lem}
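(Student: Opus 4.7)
The plan is to work in $A = EM_nE \otimes {\cal Z} \cong M_r \otimes {\cal Z}$ (where $r = \operatorname{rank}(E)$), which is simple, ${\cal Z}$-stable, and therefore has strict comparison of positive elements, stable rank one, and a unique tracial state $t \otimes \tau$. The idea is to find a projection of nearly-maximal trace inside the hereditary subalgebra generated by $1_{EM_nE} \otimes a$, and then to arrange it so as to approximately dominate $h_\eta(E \otimes a) = E \otimes h_\eta(a)$.

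First I would take $E = 1_{M_n}$ (so the rank condition on $E$ is trivial) and use that $\tau(h_\eta(a)) < d_\tau(a)$ strictly. Choose $N$ large enough so that for any $n \ge N$ there is an integer $k$ with
\[
\tau(h_\eta(a)) \;<\; \tfrac{k}{n} \;<\; d_\tau(a)
\quad\text{and}\quad
d_\tau(a) - \tfrac{k}{n} < \epsilon.
\]
Let $q \in M_n$ be a projection of rank $k$. Then $q \otimes 1_{\cal Z} \in A$ is a projection with $(t \otimes \tau)(q \otimes 1_{\cal Z}) = k/n < d_\tau(a) = d_{t \otimes \tau}(1 \otimes a)$, so by strict comparison $q \otimes 1_{\cal Z} \lesssim 1 \otimes a$ in the Cuntz order. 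Because $A$ has stable rank one, such a Cuntz subequivalence of a projection into a positive element can be realized by Murray-von Neumann equivalence, producing a projection $p_0 \in \overline{(1 \otimes a) A (1 \otimes a)}$ with $p_0 \sim q \otimes 1_{\cal Z}$, in particular of trace $k/n > d_\tau(a) - \epsilon$.

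The final step is to modify the choice of $p_0$ so that it also approximately dominates $1 \otimes h_\eta(a)$. Since $d_{t \otimes \tau}(1 \otimes h_\eta(a)) = \tau(h_\eta(a)) < k/n = (t \otimes \tau)(p_0)$, strict comparison again yields $1 \otimes h_\eta(a) \lesssim p_0$; the task is to upgrade this Cuntz relation into (approximate) order domination by a projection lying inside the fixed hereditary subalgebra $B := M_n \otimes \overline{a{\cal Z}a}$. This is to be accomplished by working entirely within the simple ${\cal Z}$-stable hereditary subalgebra $B$, where stable rank one together with a unitary-adjustment argument (in the spirit of R{\o}rdam's realization of Cuntz comparison as literal order) allows one to move $p_0$ within its Murray-von Neumann equivalence class in $B$ so as to approximately dominate $1 \otimes h_\eta(a)$ to within $\epsilon$. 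This last adjustment is the hardest part: arranging the projection to both live in the prescribed hereditary subalgebra and approximately dominate a pre-specified positive element while preserving its trace. The ${\cal Z}$-stability of $B$ is what makes this possible, supplying the requisite approximate inner automorphisms of $B$ to move $p_0$ into the correct position relative to $1 \otimes h_\eta(a)$.
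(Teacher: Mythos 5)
Your overall strategy (strict comparison plus stable rank one, carried out inside the hereditary subalgebra $B=M_n\otimes \overline{a{\cal Z}a}$) is a genuinely different route from the paper, which instead passes to $Q\otimes{\cal Z}\cong Q$ (real rank zero), extracts the projection from an approximate unit of projections of the hereditary subalgebra there, and then approximates it back into a finite matrix stage $M_{N_1}\otimes{\cal Z}$. However, as written your argument has a concrete gap at the key comparison step. You claim $d_{t\otimes\tau}(1\otimes h_\eta(a))=\tau(h_\eta(a))$; this is false in general. By definition $d_\tau(h_\eta(a))=\lim_{\lambda\to 0}\tau(h_\lambda(h_\eta(a)))=\mu_a((\eta,1])$, where $\mu_a$ is the spectral measure of $a$ with respect to $\tau$, and this can be strictly larger than $\tau(h_\eta(a))=\int h_\eta\,d\mu_a$, indeed as large as $d_\tau(a)=\mu_a((0,1])$. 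Since your $k$ is chosen only so that $\tau(h_\eta(a))<k/n<d_\tau(a)$, you may well have $\mu_a((\eta,1])>k/n=(t\otimes\tau)(p_0)$, in which case $1\otimes h_\eta(a)\lesssim p_0$ simply fails (Cuntz subequivalence would force $d(h_\eta(a))\le d(p_0)$), and the entire upgrade to approximate domination collapses. To repair this you would have to compare something like $(h_\eta(1\otimes a)-\epsilon)_+$, whose dimension is $\mu_a((\eta(1+\epsilon),1])$, choose $k/n$ strictly above that quantity (not above $\tau(h_\eta(a))$), and treat separately the degenerate case $\mu_a((0,\eta(1+\epsilon)])=0$, where no admissible $k/n$ exists; there one must observe that faithfulness of $\tau$ gives a gap in ${\rm sp}(a)$ near $0$, so $\chi_{(0,1]}(a)$ is a projection in the projectionless algebra ${\cal Z}$, forcing $a$ to be invertible and $p=1_A$ to work trivially. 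None of this is in your proposal, and without it the statement is not proved.

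The last step is also only gestured at, though here the idea is salvageable: once one really has $(h_\eta(1\otimes a)-\epsilon)_+\lesssim p_0$ inside $B$, stable rank one of $B$ (a hereditary subalgebra of $M_n\otimes{\cal Z}$) and R{\o}rdam's lemma give a unitary $u$ in the unitization of $B$ with $u(h_\eta(1\otimes a)-\epsilon)_+u^*\in p_0Bp_0$, and then $p=u^*p_0u$ lies in $B$, has the same trace as $p_0$, and satisfies $\|ph_\eta(1\otimes a)-h_\eta(1\otimes a)\|\le 2\epsilon$; the appeal to ``${\cal Z}$-stability supplying approximately inner automorphisms'' is not the operative mechanism and should be replaced by this argument. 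By contrast, the paper's route through the real rank zero algebra $Q\otimes{\cal Z}$ avoids comparison theory altogether (the approximate unit of projections of the hereditary subalgebra there immediately yields a projection that almost acts as a unit on $h_\eta(1\otimes a)$ with almost full trace), at the cost of the bookkeeping needed to pull the projection back into $M_{N_1}\otimes{\cal Z}$ and then into $EM_nE\otimes{\cal Z}$, which is why the nontrivial $E$ and the factorial choice of $N$ appear there but not in your approach.
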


\begin{proof}

Let $1>\ep>0$  and let  $1/2>\eta>0$ be any positive number.
Define $B=Q\otimes {\cal Z},$ where $Q$ is the UHF-algebra with
$(K_0(Q), K_0(Q)_+, [1_Q])=(\Q, \Q_+, 1).$  Then $B\cong Q.$
In particular, $\overline{(1_Q\otimes a)B(1_Q\otimes a)}$ has real rank zero.
Therefore, there is a projection
$q\in \overline{(1_Q\otimes a)B(1_Q\otimes a)}$ such that
\beq\label{Extp-1+1}
\|qh_\eta(1\otimes a)-h_\eta(1\otimes a)\|<\ep/2\andeqn\\
(t_0\otimes \tau)(q)>d_{t_0\otimes \tau}(1\otimes a)-\ep/16=d_\tau(a)-\ep/16,
\eneq
where $t_0$ is the unique tracial state on $Q$ (see \ref{Dfunc} and \ref{dfdimfunc}
for the definition of $h_\eta,$  $d_{t_0\otimes \tau}$ and $d_{\tau}$).
Write $Q=\lim_{n\to\infty}(M_{n!},\imath_n).$ There exists $N_0\ge 1$ such that there is a projection $p'\in M_{N_1}\otimes {\cal Z}$ such that
\beq\label{Extp-2}
\|p'-q\|<\ep/4 \andeqn 1/N_0<\ep/16,
\eneq
where $N_1=N_0!$ and where we identify $M_{N_1}$ as a unital \SCA\, of $Q.$
Since $q\in \overline{(1_Q\otimes a)B(1_Q\otimes a)},$ there is a continuous
function $f\in C_0((0,1])$ with $0\le f\le 1$ such that
\beq\label{Extp-3}
\|qf(1_Q\otimes a)-q\|<\ep/8.
\eneq
It follows that
\beq\label{Extp-4}
\|p'f(1_Q\otimes a)-p'\|<\ep/4.
\eneq
View $p'\in M_{N_1}\otimes {\cal Z},$ we also have
\beq\label{Extp-5}
\|p'f(1_{M_{N_1}}\otimes a)-p'\|<\ep/4\andeqn \|p'h_\eta(1_{M_{N_1}}\otimes a)-h_\eta(1_{M_{N_1}}\otimes a)\|<\ep.
\eneq
Put $b=f(1_{M_{N_1}}\otimes a).$ Then
\beq\label{Extp-6}
\|bp'b-p'\|<\ep/2.
\eneq
Then (by Lemma 2.5.3 in \cite{Lnbk}) there exists a projection
$p_0\in \overline{b(M_{N_1}\otimes {\cal Z})b}$ such that
\beq\label{Extp-7}
\|p_0-p'\|<\ep.
\eneq
Since $\ep<1,$ by (\ref{Extp-7} and (\ref{Extp-2}),
\beq\label{Extp-8}
(t_1\otimes \tau)(p_0)=(t_1\otimes \tau)(p')=(t_0\otimes \tau)(p')=(t_0\otimes \tau)(q)>d_\tau(a)-\ep/16,
\eneq
where $t_1$ is the tracial state on $M_{N_1}.$  Note that $b\in \overline{(1_{M_{N_1}}\otimes a)B(1_{M_{N_1}}\otimes a)}.$

Let $N=(N_0+1)!.$
If $n\ge N,$ we may write
$n=dN_0+r,$ where $d\ge N_0!$ and $0\le r<N_0.$
There is a projection $E\in M_n$ with ${\rm rank}E=dN_0.$
Then $EM_nE\cong M_d\otimes M_{N_0}.$  Moreover
$$
{{\rm rank E}\over{n}}>1-{r\over{n}}>1-\ep/16>1-\ep.
$$
Put $A=M_n\otimes {\cal Z}.$ The proof above shows that
there exists a projection $p\in \overline{(E\otimes a)A(E\otimes a)}$ such that
\beq\label{Extp-9}
(t_E\otimes \tau)(p)>d_{\tau}(a)-\ep/16,
\eneq
where $t_E$ is the tracial state on $EM_nE.$
Clearly $p\in \overline{(1_{M_n}\otimes a)A(1_{M_n}\otimes a)}.$
We compute that
\beq\label{Extp-10}
(t\otimes \tau)(p)>(d_{\tau}(a)-\ep/16)(1-r/n)\ge d_{\tau}(a)-\ep,
\eneq
where $t$ is the tracial state on $M_n.$ Note, by (\ref{Extp-5}),
\beq\label{Extp-11}
\|ph_\eta(E\otimes a)-h_\eta(E\otimes a)\|<\ep.
\eneq

\end{proof}

The following is not used in its full strength. When $A$ has sufficiently many projections,
for example, $A$ has real rank zero,
a version of the following is proved in \cite{Sato1}(Lemma 3.5). An early version of this can be found in
Lemma 2.8 of \cite{OP} and may be traced back to the proof of Theorem 4.5 of \cite{Ki0}.

\begin{lem}\label{LComp}
Let $A$ be a unital simple \CA\, with $TR(A)\le 1$ and let
$e,\, f\in A$ be two projections such that
$2\tau(e)<\tau(f)$ for all $\tau\in T(A).$ Then, for any $1>\ep>0,$ there exists
a projection $p\le f$ such that
\beq\label{LC-0}
\|pe\|<\ep,\,\,\, \tau(p)>\tau(e)\tand \tau(p)>\tau(f)-\tau(e)-\ep\tforal \tau\in T(A).
\eneq
In particular, there is
a partial isometry $w\in A$ such that
\beq\label{LC-1}
w^*w=e,\,\,\, ww^*\le f\andeqn \|w^2\|<\ep.
\eneq
Moreover,  one may requires that
\beq\label{LC-1+}
\|ph_{\ep^2/32}(fef)\|<\ep.
\eneq

\end{lem}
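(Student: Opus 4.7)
My plan is to construct $p$ from the spectral theory of $h := fef$ at $0$, and then obtain $w$ by a separate application of strict comparison of positive elements (which is a standard feature of simple unital $C^*$-algebras with $TR \le 1$). First I will pin down the key trace estimate. Using $\tau(fef) = \tau(efe) \le \tau(e)$, together with the polar decomposition $fe = v\,(efe)^{1/2}$ in $A^{**}$, where $v^*v = s(efe) \le e$ and $vv^* = s(h)$, I obtain $\tau(s(h)) = \tau(v^*v) \le \tau(e)$, and therefore $\tau(\chi_{\{0\}}(h)) \ge \tau(f) - \tau(e)$ for every $\tau \in T(A)$. Setting $\lambda := \ep^2/32$ and $a := g_\lambda(h)$ with $g_\lambda(t) := \max(0, 1 - t/\lambda)$, one has $a \in (fAf)_+$ and $\chi_{(0,1]}(a) = \chi_{[0,\lambda)}(h) \ge \chi_{\{0\}}(h)$, which gives $d_\tau(a) \ge \tau(f) - \tau(e)$ for every $\tau$.

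Next I will construct $p$. Strict comparison applied to $\tau(e) < \tau(f)$ produces a projection $p_1 \le f$ with $p_1 \sim e$; set $p_0 := f - p_1$, so $\tau(p_0) = \tau(f) - \tau(e)$. Let $c := \inf_\tau(\tau(f) - 2\tau(e)) > 0$. Applying Lemma~\ref{appdiv} inside the corner $p_0 A p_0$ (itself simple, unital, and of tracial rank at most one) with $N$ large, I extract a small projection $q \le p_0$ whose trace is at most $1/N$ in every $\tau$, small enough that $0 < \tau(q) < \tfrac{1}{2}\min(\ep, c)$ uniformly. Put $p_0' := p_0 - q$. Then $\tau(p_0') < \tau(f) - \tau(e) \le d_\tau(a)$ strictly in every $\tau$, so strict comparison of positive elements yields $p_0' \lesssim a$, and so there is a projection $p \sim p_0'$ lying in $\overline{a A a} \subseteq fAf$. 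In particular $p \le f$, $p \le \chi_{[0,\lambda)}(h)$ in $A^{**}$, and $\tau(p) > \max(\tau(e),\, \tau(f) - \tau(e) - \ep)$ for all $\tau$.

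The remaining conclusions follow from spectral bookkeeping. Because $p \le \chi_{[0,\lambda)}(h)$ in $A^{**}$ and $\|h \cdot \chi_{[0,\lambda)}(h)\| \le \lambda$, one has $\|ph\| \le \lambda$; since $p \le f$, $pep = p(fef)p = php$, and so $\|pe\|^2 = \|pep\| \le \lambda < \ep^2$. For the moreover, $h_\lambda$ vanishes on $[0,\lambda]$, so $h_\lambda(h) \in \chi_{(\lambda,1]}(h)\, A^{**}\, \chi_{(\lambda,1]}(h)$ while $p \cdot \chi_{[\lambda,1]}(h) = 0$, giving $p \cdot h_\lambda(h) = 0$. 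For the partial isometry, strict comparison applied to $\tau(e) < \tau(p)$ yields $w \in A$ with $w^*w = e$ and $ww^* \le p \le f$, and then
$$
\|w^2\|^2 = \|w^*(w^*w)w\| = \|w^*ew\| = \|ew\|^2 \le \|epe\| = \|pe\|^2 \le \lambda,
$$
so $\|w^2\| \le \sqrt{\lambda} < \ep$.

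The main hurdle I expect is producing the small projection $q$ with trace bounded uniformly over $T(A)$. Lemma~\ref{appdiv} applied inside the corner $p_0 A p_0$ supplies an embedded $M_N$ for $N$ as large as needed, and a rank-one projection therein does the job. Everything else is a routine appeal to strict comparison of positive elements in $TR \le 1$ simple unital algebras.
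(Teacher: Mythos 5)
Most of your argument checks out: the estimate $d_\tau(fef)\le\tau(e)$ (so that $d_\tau(a)\ge\tau(f)-\tau(e)$ for $a=g_\lambda(fef)$ computed in the corner $fAf$ -- do say explicitly that the functional calculus is taken in $fAf$, since $g_\lambda(0)\neq 0$), the small projection $q$ of uniformly small trace obtained from Lemma \ref{appdiv} in the corner $p_0Ap_0$ (corners of simple $TR\le 1$ algebras again have $TR\le 1$, and compactness of $T(A)$ gives the uniform gaps you need), the spectral bookkeeping $\|pe\|^2\le\lambda$, the estimate $\|w^2\|\le\sqrt{\lambda}$, and the fact that your $p$ even satisfies $ph_{\ep^2/32}(fef)=0$, which is stronger than (\ref{LC-1+}). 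Your route is genuinely different from the paper's: the paper never compares a projection against a positive element in $A$; it works inside a tracially approximating interval algebra $C\cong\bigoplus_j M_{r(j)}(C(I_j))$, uses Lemma C of \cite{BDR} to select a projection of controlled rank under $h_\dt(f'e'f')$, and only uses comparison of projections (4.7 of \cite{Lntr=1}) and property (SP) in $A$ itself.

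The one step I cannot accept as written is the load-bearing one: from $d_\tau(p_0')<d_\tau(a)$ for all $\tau\in T(A)$ you conclude $p_0'\lesssim a$, calling strict comparison of positive elements ``a standard feature of simple unital $C^*$-algebras with $TR\le 1$.'' The lemma is stated for a unital simple $C^*$-algebra with $TR(A)\le 1$ and nothing else -- no separability, nuclearity or UCT -- and in that generality this is not an off-the-shelf fact: the comparison result the paper itself invokes is for projections, the route through ${\cal Z}$-stability (Corollary 8.4 of \cite{LnlocAH} plus R{\o}rdam's theorem, and exactness to identify quasitraces with traces) needs separability and amenability, and a direct proof from the definition of $TR(A)\le 1$ requires precisely the tracial-approximation-plus-rank-selection argument (the \cite{BDR} step) that constitutes the paper's own proof, because hereditary subalgebras such as $\overline{aAa}$ need not contain large projections when $A$ is not of real rank zero. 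So either supply a precise citation valid in this generality, or add the hypotheses ``separable and amenable'' (harmless for the only application, Proposition \ref{PSI}, where ${\cal Z}$-stability is already being used) and justify the comparison through ${\cal Z}$-stability; otherwise the black box has to be opened, and doing so essentially reproduces the paper's argument.
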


\begin{proof}
Since $TR(A)\le 1$ and $A$ has strict comparison (see 4.7 of \cite{Lntr=1}),
we have two mutually orthogonal and mutually equivalent projections $f_1, f_2\in A$ such that
$f_1+f_2\le f$ and $f_1$ is equivalent to $e.$
Since $A$ has property (SP) (see  3.2 and  of \cite{Lntr=1}),
there are mutually orthogonal and mutually equivalent non-zero projections
$f_3, f_4, f_5, f_6\in (f-f_1-f_2)A(f-f_1-f_2).$
Put $f_0=f-\sum_{i=1}^6f_i.$

Let $N\ge 1$ be an integer such that $1/N<\min\{\max_{\tau\in T(A)}\{\tau(f_3)\},\ep\}/8$ for all $\tau\in T(A).$
There are partial isometries $v_1, v_2,  v_3, u_1, u_2,...,u_5\in A$ such that
\beq\label{LC-3-1}
&&v_1^*v_1=e, v_1v_1^*=f_1,\,\, v_2^*v_2=e,
 v_2v_2^*=f_2, v_3^*v_3=f_1, v_3v_3^*=f_2,\\
&&u_j^*u_j=f_j\andeqn u_ju_j^*=f_{j+1},\,\,\, j=3,4,5.
\eneq

%Fix $\eta>0$ with $\eta<\min\{1/2, \ep\}.$
Choose $\eta>0$  such that
\beq\label{LC-3n+1}
\|h_{\ep^2/32}(a)-h_{\ep^2/32}(b)\|<\ep/16
\eneq
provided that $0\le a,\, b \le 1$ and $\|a-b\|<\eta,$ where $ a,\, b\in A.$
%We may assume that $\eta<\min\{1, \ep,\dt/16\}.$

If $ef=0,$ the lemma follows easily.  So we assume that $fef\not=0.$
Let $\dt=\min\{\ep^2/128, \|fef\|/4, 1/4\}.$
Since $A$ is simple, there are
$x_1,x_2,...,x_n\in A$ such that
\beq\label{LComp-n1}
\sum_{j=1}^n x_i^*h_\dt(fef)x_i=1.
\eneq
Let $M=\max\{\|x_i\|: 1\le i\le n\}$ and
let
$$
{\cal F}=\{f,e,fe, ef,  fef, f_1, ...,f_6,v_1, v_2, v_3, u_1,...,u_5\}\cup\{x_i, x_i^*:1\le i\le n \},
$$
Fix  $0<\eta'\le \min\{\ep/2, \dt/16, \eta/2\}.$
Since $TR(A)\le 1,$  using ${\cal F},$ one obtains   a projection $P\in A$ and a \SCA\,
$C\cong \bigoplus_{j=1}^J M_{r(j)}(C((I_j)))$ with $1_C=P$ and with
$r(j)\ge N,$
where $I_j$ is either a point or $I_j=[0,1],$ such that
\beq\label{LC-3}
\tau(1-P)<\min\{\tau(f_3),\ep\}/16\tforal \tau\in T(A),\\\label{LC-3+0}
\|xP-Px\|<\eta'/96(64Mn)\tforal x\in {\cal F},\\\label{LC-3+}
 \|f_i-(f_i'+f_i'')\|<\eta'/96(16Mn),\,\,\,\|e-(e'+e'')\|<\eta'/96(16Mn),\\
e',f_i'\in C, \,\,\, e'',f_i'', f''\in (1-P)A(1-P), \,\,\,i=0,1, ...,6,
\eneq
where $e', f_i', f_i'', e''$ are non-zero projections, and
$f_i'f_j'=0,$ $f_i''f_j''=0,$ if $i\not=j.$  We may further assume
that
\beq\label{LC-4}
\hspace{-0.3in}t(e')=t(f_i'),\,\,\, t(f_1')=t(f_2'), t(f'_3)=t(f_4')=t(f_5')=t(f_6') \tforal t\in T(C),\,\,\, i=1,2.
\eneq
Note that (\ref{LC-3+})  and (\ref{LC-4}) can be hold since $f_i, v_i, u_i\in {\cal F}.$ 

Put $f'=\sum_{i=0}^6f_i' $ and 
put $a=f'e'f'.$
Note that
\beq\label{LC-4+}
\|\sum_{i=1}^ny_i^*h_\dt(a)y_i-P\|<\dt/16
\eneq
for some $y_i\in C,$ $i=1,2,...,n,$ if we choose $\eta'$ sufficiently small
(depending only additionally on $\dt$).
%For each $1/2>\lambda>0.$
%Define a continuous function $h_\lambda$ on $(0,1]$ as follows
%$$
%h_\lambda=\begin{cases} 0, &  0\le t\le \lambda\\
                             %  t/\lambda-1, &  \lambda\le t<2\lambda\\
                             %  1, & 2\lambda\le t\le 1.
                            %   \end{cases}
                            %   $$

                  We see
                            that $h_{\dt}(a)$ is non-zero at each point of $I_j$ (in each summand of $C$).
                            Let $R(j)$ be the least rank of $h_{\dt}(a)$ and $R(j)'$ be the largest  rank of $h_{\dt/2}(a)$
in $f'M_{r(j)}(I_j)f',$ $j=1,2,...,J.$   Note the rank of $e'$ is at least  $R(j)'$ in $j$-th summand.
So $f_2'$ has rank at most $R(j)'$ in $j$-th summand.

It follows from Lemma C of \cite{BDR} that there is a projection $q\in f'Cf'$ such that
\beq\label{LC-5}
qh_\dt(a)=h_\dt(a)
\eneq
and the rank of $q$ at each summand $f'M_{r(j)}(I_j)f'$ is at most
$R(j)'+1.$
Put $p_1=f'-q\in f'Cf'.$ Then
\beq\label{LC-6}
p_1h_\dt(a)=0.
\eneq
Moreover
\beq\label{LC-6+}
\tau(p_1) &>&\tau(f')-\tau(e')-1/N>\tau(f)-\tau(e')-\ep/16-\tau(1-P)\\\label{LC-6++}
&>& \tau(f)-\tau(e)-\ep/4\tforal \tau\in T(A).
\eneq

We also have
\beq\label{LC-7}
\hspace{-0.4in}\|h_\dt(a)f'e'-f'e'\|^2\\
&=&\|h_\dt(a)f'e'f'h_\dt(a)-h_\dt(a)f'e'f'-f'e'f'h_\dt(a)+f'e'f'\|\\
&\le& 2\|h_\dt(f'e'f')f'e'f'-f'e'f'\|\le 2\dt.
\eneq
It follows that
\beq\label{LC-8}
\|p_1e'\|&=&\|qf'e'-f'e'\|\\
&\le & \|qf'e'-qh_\dt(a)f'e'\|+\|qh_\dt(a)f'e'-h_\dt(a)f'e'\|\\
&&+\|h_\dt(a)f'e'-f'e'\|\\\label{LC-8+}
&<& \sqrt{2\dt}+\sqrt{2\dt}<\ep/64.
\eneq
Therefore there is a projection
$e_1'\in (1-p_1)C(1-p_1)$ such that
\beq\label{LC-8n}
\|e_1'-e'\|<\ep/16.
\eneq
Now we count rank of $p_1.$ Since $f'=\sum_{i=0}^6f_i'$ and
the rank of $q$ in $j$-th summand is not more than that of $f_1',$
$f_2'$ has the same rank as that of $f_1'$ which has the rank as that of $e'$ at each
summand of $C,$
$p_1=f'-q$ has rank great than that of $e_1'$ at each summand of $C.$
 So there is a partial isometry
 $w_1\in C$ such that
\beq\label{LC-9}
w_1^*w_1=e_1'\andeqn w_1w_1^*\le p_1.
\eneq
Since $e_1'\in (1-p_1)A(1-p_1),$ it follows  that
\beq\label{LC-9+}
\|w_1w_1\|=\|w_1e_1'p_1w_1\|=0
\eneq
Note that
\beq\label{LC-11}
\tau(1-P)<\tau(f_5')\le \tau(p_1-w_1w_1^*)\tforal \tau\in T(A).
\eneq
There is a partial isometry $w_2\in A$ such that
\beq\label{LC-12}
w_2^*w_2=e''\andeqn w_2w_2^*\le p_1-w_1w_1^*.
\eneq
Since $p_1-w_1w_1^*\in PAP,$ we have  $w_2^2=0.$ Moreover
\beq\label{LC-12+1}
w_1w_2=w_1e_1'(p_1-w_1w_1^*)w_2=0\andeqn w_2w_1=w_2e''p_1w_1=0.
\eneq
Put $w'=w_1+w_2.$ Then, by (\ref{LC-9+}) and (\ref{LC-12+1})
\beq\label{LC-12+2}
(w')^*w'=e_1'+e'',\,\,\,w'(w')^*\le p_1\andeqn (w')^2=0,
\eneq
Since
\beq\label{LC-12+}
\|fp_1-p_1\|<\ep/16,
\eneq
there is a projection $p\in A$ such that $p\le f$ and
\beq\label{LC-13}
\|p_1-p\|<\ep/8.
\eneq
It follows from (\ref{LC-12+2}), (\ref{LC-3+}) and (\ref{LC-13}) that there exists a partial isometry $w\in A$ such that
\beq\label{LC-13+}
w^*w=e,\,\,ww^*\le f\andeqn \|w^2\|<\ep.
\eneq
From  (\ref{LC-9}) and (\ref{LC-11}),
we have
\beq\label{LC-14}
\tau(e)=\tau(e')+\tau(e'')=\tau(e_1')+\tau(e'')<\tau(w_1w_1^*)+\tau(p_1-w_1w_1^*)
=\tau(p)
\eneq
for all $\tau\in T(A).$
We also have, (using  (\ref{LC-6++} for (\ref{CL-15+})),
\beq\label{LC-15}
\|pe\|<\ep/8+\| p_1(e'+e'')\|=\ep/8 +\|p_1e'\|<\ep/8+\ep/16<\ep\andeqn\\\label{CL-15+}
\tau(p)=\tau(p_1)>\tau(f)-\tau(e)-\ep/4\tforal \tau\in T(A).
\eneq
Finally, by the choice of $\eta,$ one computes
that
$$
ph_{\ep^2/32}(fef)\approx_{\ep/8} p (h_{\ep^2/32}(a)+h_{\ep^2/32}(f''e''f''))\approx_{\ep/8}p_1h_{\dt}(a)=0.
$$
This completes the proof.
\end{proof}

The following probably holds in much more general setting. Recall that a sequence
$\{a_n\}$ is said to be central in $A,$ if $\lim_{n\to\infty}\|a_nb-ba_n\|=0$
for all $b\in A.$

\begin{prop}\label{PSI}
Let $A$ be a unital separable amenable  simple \CA\, with $TR(A)\le 1.$
Let $\{e_n\}$ and $\{p_n\}$ be two central sequences  of projections for $A$ satisfying
\beq\label{PSI-1}
\lim_{n\to\infty} \sup_{\tau\in T(A)}\tau(e_n)=0\andeqn
\liminf_{n\to\infty} \min_{\tau\in T(A)} \tau(p_n)>0.
\eneq
Then there exists a central sequence of partial isometries $\{w_n\}$ in $A$ such that
\beq\label{PSI-2}
\lim_{n\to\infty}\|w_n^*w_n-e_n\|=0,\,\,\,\lim_{n\to\infty}\|p_nw_n-w_n\|=0\andeqn \lim_{n\to\infty} \|w_n^2\|=0
\eneq
\end{prop}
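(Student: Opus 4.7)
The plan is to combine Lemma \ref{LComp} with tracial approximate divisibility (Lemma \ref{appdiv}) to build each $w_n$ inside an approximately central matrix subalgebra of $A$, up to a negligible correction.

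First I would make the reduction $e_n p_n = p_n e_n = 0$. Since $\{e_n\},\{p_n\}$ are central with $\sup_\tau\tau(e_n)\to 0$, we have $\|[e_n,p_n]\|\to 0$, so functional calculus on $p_n(1-e_n)p_n$ (Lemma 2.5.3 of \cite{Lnbk}) yields a projection $\tilde p_n\le p_n$ with $\tilde p_n e_n=0$, still central, and $\liminf\inf_\tau\tau(\tilde p_n)>0$. Under this orthogonality, any partial isometry with $w_n^*w_n\le e_n$ and $w_nw_n^*\le \tilde p_n$ automatically satisfies $w_n^2=0$, since $w_n=\tilde p_n w_n e_n$ gives $w_n^2=\tilde p_n w_n(e_n\tilde p_n)w_n e_n=0$; thus the condition $\|w_n^2\|\to 0$ will come for free.

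Next, fix a nested exhausting family of finite subsets $\mathcal F_n\uparrow A$ with dense union and tolerances $\epsilon_n\downarrow 0$. For each $n$, apply Lemma \ref{appdiv} to $\mathcal F_n\cup\{e_n,\tilde p_n\}$ with dimension $K_n\to\infty$ and $\eta_n\ll\inf_\tau\tau(\tilde p_n)$, producing a projection $Q_n$ and $B_n\cong M_{K_n}\subset Q_nAQ_n$ with $1_{B_n}=Q_n$, $\|[Q_n,x]\|<\epsilon_n$, $Q_nxQ_n$ commuting exactly with $B_n$, and $\tau(1-Q_n)<\eta_n$. Identify $Q_nAQ_n\cong M_{K_n}(B_n^c)$ with $B_n^c:=B_n'\cap Q_nAQ_n$, itself a unital simple $C^*$-algebra with $TR\le 1$. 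Decompose $e_n\approx f_n+r_n$ in norm, where $f_n$ is a projection obtained by functional calculus on $e_nQ_n$, lying essentially in $Q_nAQ_n$ and commuting with $B_n$ up to $\epsilon_n$, while $r_n$ sits essentially in $(1-Q_n)A(1-Q_n)$ with $\tau(r_n)\le\tau(1-Q_n)<\eta_n$. In the tensor decomposition, $f_n$ corresponds to $1_{B_n}\otimes \tilde f_n$ and the compression of $\tilde p_n$ to $1_{B_n}\otimes\tilde q_n$ with $\tilde f_n\perp \tilde q_n$ and $\tau(\tilde f_n)\ll\tau(\tilde q_n)$.

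Applying Lemma \ref{LComp} inside $B_n^c$ to $\tilde f_n,\tilde q_n$ produces a partial isometry $\tilde v_n\in B_n^c$ with $\tilde v_n^*\tilde v_n=\tilde f_n$, $\tilde v_n\tilde v_n^*\le\tilde q_n$, and $\tilde v_n^2=0$ by orthogonality; set $w_n^{(1)}:=1_{B_n}\otimes \tilde v_n$. A second application of Lemma \ref{LComp} inside the corner $(1-Q_n)A(1-Q_n)$, to the vanishing-trace projection $r_n$ and a subprojection of $\tilde p_n$ orthogonal to $w_n^{(1)}(w_n^{(1)})^*$, yields a remainder partial isometry $w_n^{(2)}$ with source $r_n$. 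Defining $w_n:=w_n^{(1)}+w_n^{(2)}$, orthogonality of the sources and ranges of the two pieces yields $\|w_n^*w_n-e_n\|\to 0$, $\|\tilde p_n w_n-w_n\|\to 0$, and $w_n^2=0$.

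The main obstacle is verifying that $\{w_n\}$ is a genuine central sequence, i.e.\ $\|[w_n,x]\|\to 0$ for all $x\in A$. Since $w_n^{(1)}$ commutes exactly with $B_n$ and with $(1-Q_n)A(1-Q_n)$, for $x\in\mathcal F_n$ the commutator $[w_n^{(1)},x]$ reduces, modulo terms of order $\|[Q_n,x]\|<\epsilon_n$, to $[\tilde v_n,\tilde x_n]$ in $B_n^c$, where $\tilde x_n$ is the image of $Q_nxQ_n$. To kill this, I would apply Lemma \ref{appdiv} once more inside $B_n^c$ relative to $\{\tilde x_n:x\in\mathcal F_n\}$, producing an inner matrix subalgebra and building $\tilde v_n$ inside its commutant by the same tensor recipe; this nested use of approximate divisibility is the technical heart. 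The correction $w_n^{(2)}$ must be centralized by running the same construction recursively inside $(1-Q_n)A(1-Q_n)$; because $\tau(1-Q_n)\to 0$ the contributions of deeper layers shrink, so finitely many iterations bring all commutators below $\epsilon_n$.
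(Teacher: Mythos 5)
Your construction has a genuine gap at exactly the point you flag as ``the main obstacle'': nothing in your scheme actually produces approximate centrality of $w_n$. Lemma \ref{appdiv} makes the compressions $Q_nxQ_n$ ($x\in{\cal F}_n$) approximately commute with the matrix factor $B_n$, so the only elements it hands you that are automatically approximately central are elements \emph{of} $B_n$; but the source projection of your partial isometry must be (close to) the prescribed projection $e_n$, which lives in the relative commutant $B_n'\cap Q_nAQ_n$, not in $B_n$. So you are forced to build $\tilde v_n$ in the relative commutant, and there Lemma \ref{LComp} gives no control whatsoever on the commutators $[\tilde v_n,\tilde x_n]$: it is a pure existence/comparison statement. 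Your proposed fix --- apply \ref{appdiv} again inside $B_n^c$ and put $\tilde v_n$ in the commutant of the new matrix algebra --- reproduces the same problem one level down (the $\tilde x_n$ approximately commute with the new matrix algebra, not with its relative commutant), so the iteration never improves the commutators of $w_n^{(1)}$ with ${\cal F}_n$; and the correction $w_n^{(2)}$ has a norm-one source $r_n$, so the smallness of $\tau(1-Q_n)$ does not make its commutators small either (trace smallness is being conflated with norm smallness). There is also a false preliminary claim: two central sequences need not asymptotically commute with each other, so ``$\{e_n\},\{p_n\}$ central $\Rightarrow\|[e_n,p_n]\|\to0$'' fails (e.g.\ take $e_n$ of rank one and $p_n$ of rank $\sim k_n/2$ in the $n$-th factor $M_{k_n}$ of a UHF algebra, chosen not to commute); the orthogonalization $\tilde p_n\le p_n$, $\tilde p_ne_n=0$ must instead be extracted by a comparison argument of the type in Lemma \ref{LComp}, as the paper does with $y_n=p_n-h_{a_n}(p_ne_np_n)$ and $q_n$.

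The missing idea is the one the paper actually uses: since $TR(A)\le1$ implies $A$ is ${\cal Z}$-stable (Corollary 8.4 of \cite{LnlocAH}), one can invoke Theorem 4.2 of Matui--Sato \cite{MS}, which is precisely an excision-of-small-central-sequences (property (SI)) statement: given the central sequences $e_n$ (uniformly small in trace) and $y_n=p_n-h_{a_n}(p_ne_np_n)$ (with $\inf_m\liminf_n\min_\tau\tau(y_n^m)>0$, verified via Lemma \ref{LComp}), it produces a \emph{central} sequence $s_n$ with $s_n^*s_n\to e_n$ and $y_ns_n\to s_n$; one then perturbs $s_n$ in \emph{norm} to partial isometries $w_n$, so centrality survives, and $\|w_n^2\|\to0$ follows from $\|e_ny_n\|\to0$. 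That centrality mechanism is a substantive theorem about the central sequence algebra and cannot be recovered from finitely many nested applications of tracial approximate divisibility together with Lemma \ref{LComp}.
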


\begin{proof}
We first note, by Corollary 8.4 of \cite{LnlocAH}, since $TR(A)\le 1,$ $A$ is ${\cal Z}$-stable. Let $\{{\cal F}_n\}$ be an increasing sequence of finite subsets of the unit ball of $A$
such that $\cup_{n=1}^{\infty} {\cal F}_n$ is dense in the unit ball of $A.$
Since $\{p_ne_np_n\}$ is a central sequence for $A,$ for each $k,$  there exists $n(k)$ such that
\beq\label{PSI-3}
\|h_{1/32k^2}(p_ne_np_n)x-xh_{1/32k^2}(p_ne_np_n)\|<1/k\tforal x\in {\cal F}_k
\eneq
and for all $n\ge n(k),$
$k=1,2,....$ We may assume that $n(k+1)>n(k),$ $k=1,2,....$
Define $a_1=1/32,$ $a_j=1/32k^2,$ if $n(k)\le j< n(k+1),$ $k=1,2,....$
Then $\{h_{a_n}(p_ne_np_n)\}$ is also a central sequence for $A.$

For each $n,$ define $y_n=p_n-h_{a_n}(p_ne_np_n).$ Then $0\le y_n\le 1,$ $n=1,2,...,$ and $\{y_n\}$ is a central sequence for $A.$
By applying \ref{LComp}, one has a projection $q_n\le  p_n$ such
that
\beq\label{PSI-4}
\|q_nh_{a_n}(p_ne_np_n)\|<1/k \andeqn \tau(q_n)\ge \tau(p_n)-\tau(e_n)-1/k
\tforal n\ge n(k),
\eneq
$k=1,2,....$
We also have
\beq\label{PSI-4+1}
\lim_{n\to\infty}\|h_{a_n}(p_ne_np_n)p_ne_n-p_ne_n\|^2=0.
\eneq
It follows that
\beq\label{PSI-4+2}
\lim_{n\to\infty}\|y_ne_n\|&=&
\lim_{n\to\infty}\|(p_n-h_{a_n}(p_ne_np_n)e_n\|\\\label{PSI-4+3}
&=&\lim_{n\to\infty}\|p_ne_n-h_{a_n}(p_ne_np_n)p_ne_n\|=0.
\eneq
For $n\ge n(k),$
\beq\label{PSI-5}
\|q_ny_n-q_n\|=\|q_nh_{a_n}(p_ne_np_n)\|<1/k, \,\,\,k=1,2,....
\eneq
Therefore,
for any integer $m\ge 1,$ if $n\ge n(k),$
\beq\label{PSI-6}
\|y_n^mq_ny_n^m-q_n\|<2m/k,\,\,\,k=1,2,....
\eneq
Thus, if $n\ge n(k),$
\beq\label{PSI-7}
\tau(y_n^m)\ge \tau(y_n^{2m})\ge \tau(y_n^mq_ny_n^m)>\tau(q_n)-2m/k,
\eneq
$k=1,2,....$
It follows that
\beq\label{PSI-8}
\inf_{m\in \N}\lim_{n\to\infty} \min_{\tau\in T(A)}\tau(y_n^m) &\ge & \inf_{m\in \N}\lim_{n\to\infty}\min_{\tau\in T(A)}\tau(q_n)\\
&=&\inf_{m\in \N}\lim_{n\to\infty}\min_{\tau\in T(A)}\tau(p_n)>0
\eneq
By Theorem 4.2 of \cite{MS}, since $A$ is ${\cal Z}$-stable, there is a central sequence
$\{s_n\}$ for $A$ such that
\beq\label{PSI-9}
\lim_{n\to\infty}\|s_n^*s_n-e_n\|=0\andeqn
\lim_{n\to\infty}\|y_ns_n-s_n\|=0.
\eneq
It is standard that there exists a central sequence of partial isometries
$\{w_n\}$ of $A$ such that
\beq\label{PSI-10}
w_n^*w_n=e_n\andeqn w_nw_n^*\le p_n\andeqn \|w_n-s_n\|=0.
\eneq
Therefore $\{w_n\}$ is a central sequence of $A.$
By  (\ref{PSI-9}) and (\ref{PSI-4+3}),
\beq\label{PSI-11}
\lim_{n\to\infty}\|w_n^2\| &=&\lim_{n\to\infty}\|s_n^2\|\\
&=&\lim_{n\to\infty}\|s_ne_ny_ns_n\|\\
&=&\lim_{n\to\infty}\|e_ny_n\|=0.
\eneq
\end{proof}

\section{Existence of automorphisms with the Rokhlin  property}

The following fact will be used without further notices.
\begin{lem}\label{afsmall}
Let $A$ be a unital simple \CA\, with $T(A)\not=\emptyset.$
Suppose that $\af\in \Aut(A).$ Then, for any positive element $a\in A,$
\beq\label{afsmall-1}
\inf\{\tau\circ \af(a): \tau\in T(A)\}=\inf\{\tau(a): \tau\in T(A)\}\tand\\
\sup\{\tau\circ \af(a): \tau\in T(A)\}=\sup\{\tau(a): \tau\in T(A)\}.
\eneq
In particular, if  $1>\eta>0$ and $e\in A$ is a projection such that $\tau(e)<\eta$
for all $\tau\in T(A),$ then
\beq\label{afsmall-2}
\tau(\af(e))<\eta\tforal \tau\in T(A).
\eneq

\end{lem}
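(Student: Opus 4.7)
The plan is essentially a one-line observation dressed up slightly. The key point is that precomposition with the automorphism $\af$ gives a bijection $T(A) \to T(A)$, so the two sets being compared for the $\inf$ and $\sup$ statements are literally equal as sets.

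More concretely, first I would verify that $\tau \circ \af \in T(A)$ for every $\tau \in T(A)$. Positivity and unitality are immediate (since $\af(1_A) = 1_A$), and the trace property follows because $\af$ preserves products: $(\tau\circ\af)(xy) = \tau(\af(x)\af(y)) = \tau(\af(y)\af(x)) = (\tau\circ\af)(yx)$. The assignment $\tau \mapsto \tau \circ \af$ is then a bijection of $T(A)$ with inverse $\tau \mapsto \tau \circ \af^{-1}$.

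Consequently, the set $\{\tau\circ\af(a) : \tau \in T(A)\}$ coincides with $\{\sigma(a) : \sigma \in T(A)\}$ via the reparametrization $\sigma = \tau \circ \af$. Thus their infima and suprema agree, which gives both displayed equalities.

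For the ``in particular'' statement, given any $\tau \in T(A)$ set $\sigma = \tau \circ \af$, which lies in $T(A)$. Then $\tau(\af(e)) = \sigma(e) < \eta$ by hypothesis, establishing the claim. No difficulty is expected; there is no main obstacle, as the simplicity of $A$ and the refined structure of $T(A)$ play no role beyond the standing hypothesis $T(A) \neq \emptyset$.
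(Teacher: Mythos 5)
Your proposal is correct and is essentially the paper's own argument: the paper likewise notes that $\tau\circ\af$ is again a tracial state for every $\tau\in T(A)$, which is exactly your reparametrization observation. Nothing further is needed.
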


\begin{proof}
This follows from the fact
that $\tau\circ \af$ is a a tracial state of $A$ for all $\tau\in T(A).$
\end{proof}

\begin{lem}\label{Text1}
Let $A$ be a unital separable amenable simple \CA\, with $TR(A)\le 1.$
Let $\af\in \Aut(A)$ and $\sigma\in \Aut({\cal Z})$ as defined in \ref{dJS}.
Then $\af\otimes \sigma$ has tracial Rokhlin property {\rm (see \cite{Ph1})} Moreover,
one has the following:
Let $k\ge 2$ be an integer.
Let $\ep>0$ and $1/k>\eta>0,$ let ${\cal F}\subset A\otimes {\cal Z}$ be a finite subset.
There are mutually orthogonal projections $p_0, p_1,...,p_{k-1}\in A\otimes {\cal Z}$ such that
\beq\label{Text1-1}
\|[p_j,\, x]\|<\ep\tforal x\in {\cal F},\,\,\,
(\af\otimes \sigma)^j(p_0)=p_j,\\
\tau(p_j)>1/k-\eta\tforal \tau\in T(A),\,\,\, j=0,1,2,...,k-1,\\
\tau(1-\sum_{j=0}^{k-1}(\af\otimes \sigma)^j(p_0))<\eta\tand \tau((\af\otimes \sigma)(p_{k-1}))>1/k-\eta
\tforal \tau\in T(A).
\eneq
\end{lem}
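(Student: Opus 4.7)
The approach is to produce $p_0\in A\otimes{\cal Z}$ of the form guaranteed by Lemma~\ref{Extpro}: a projection lying inside a hereditary subalgebra generated by an element $E\otimes a$, where $E$ is an almost-full-rank projection in a nearly central matrix subalgebra of $A\otimes{\cal Z}$ and $a\in{\cal Z}$ is a positive contraction satisfying the key orthogonality property
\[
a\cdot \sigma^j(a)=0\rforal j=1,\ldots,k-1,
\]
together with $\tau_{\cal Z}(a)\ge 1/k-\eta/(4k)$. Defining $p_j=(\af\otimes\sigma)^j(p_0)$, the orthogonality of $\{p_j\}$ follows automatically because the bridging element
$
(E\otimes a)(\af^j(E)\otimes\sigma_0^j(a))=E\af^j(E)\otimes a\sigma_0^j(a)
$
vanishes, so the hereditary subalgebras containing $p_0$ and $p_j$ annihilate each other.

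First I would construct $a$. Using ${\cal Z}=\bigotimes_{n\in\N}{\cal Z}$ and placing $a$ in the initial tensor factor, the problem reduces to finding $a$ in one copy of ${\cal Z}=\bigotimes_{n\in\Z}{\cal Z}$ with $\sigma_0$ the two-sided shift. Fix a positive $h\in{\cal Z}$ of spectrum $[0,1]$ whose spectral distribution under the unique trace is diffuse. Its positionwise copies $h_n$ generate a commutative unital sub-$C^*$-algebra of ${\cal Z}$ isomorphic to $C([0,1]^{\Z})$, on which $\sigma_0$ acts as the classical Bernoulli shift preserving the product probability measure $\mu_h^{\Z}$. Since this $\Z$-action is free and ergodic, the Halmos-Rokhlin tower theorem together with inner regularity produces a compact $K\subset [0,1]^{\Z}$ with $T^j(K)\cap K=\emptyset$ for $j=1,\ldots,k-1$ and $\mu_h^{\Z}(K)\ge 1/k-\eta/(4k)$. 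Choose an open neighborhood $V\supset K$ small enough that $T^j(\overline V)\cap \overline V=\emptyset$ for $j=1,\ldots,k-1$, which is possible since the finitely many compact translates $T^j(K)$ are separated by a positive distance in a compatible metric. Urysohn's lemma yields a continuous $a\colon [0,1]^{\Z}\to [0,1]$ with $a=1$ on $K$ and $\mathrm{supp}(a)\subset \overline V$; hence $a\cdot\sigma_0^j(a)=0$ exactly and $\int a\,d\mu_h^{\Z}\ge 1/k-\eta/(4k)$.

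Next I combine $a$ with the structure of $A$. Tracial approximate divisibility of $A\otimes{\cal Z}$ (Lemma~\ref{appdiv}) provides a projection $P$ and a unital subalgebra $B\cong M_N\subset P(A\otimes{\cal Z})P$, with $N$ a sufficiently large multiple of $k$, such that $\|[P,x]\|$ and $\|[b,PxP]\|$ are small for $x\in {\cal F}$ and $b\in B^{\bf 1}$, and $\tau(1-P)<\eta/(4k)$. An additional application of ${\cal Z}$-stability (writing ${\cal Z}\cong{\cal Z}\otimes{\cal Z}$) arranges that the ${\cal Z}$-factor containing $a$ is approximately central with respect to the ${\cal Z}$-components of ${\cal F}$. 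Then Lemma~\ref{Extpro} applied in $B\otimes{\cal Z}\cong M_N\otimes{\cal Z}$ with input $a$ produces a projection $E\in B$ of rank ratio at least $1-\ep'$ and a projection $p_0\in\overline{(E\otimes a)(B\otimes{\cal Z})(E\otimes a)}\subset A\otimes{\cal Z}$ with normalized trace in $B\otimes{\cal Z}$ at least $d_{\tau_{\cal Z}}(a)-\ep'\ge 1/k-\eta/(2k)$ for $\ep'$ sufficiently small.

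Setting $p_j=(\af\otimes\sigma)^j(p_0)$, all conclusions follow. Orthogonality comes from $a\sigma_0^j(a)=0$ via the bridging computation. The trace bound $\tau(p_j)>1/k-\eta$ combines the Extpro estimate with $\tau(P)>1-\eta/(4k)$ and the invariance of the infimum trace under $\af\otimes\sigma$ from Lemma~\ref{afsmall}, which also delivers $\tau((\af\otimes\sigma)(p_{k-1}))=\tau\circ(\af\otimes\sigma)^k(p_0)>1/k-\eta$; the summed bound $\tau(1-\sum_j p_j)<\eta$ holds because each $\tau(p_j)>1/k-\eta/(2k)$. Approximate centrality $\|[p_j,x]\|<\ep$ follows from the choice of $B$ and of the approximately central ${\cal Z}$-copy containing $a$. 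The main obstacle is the construction of $a$: securing the \emph{exact} equality $a\sigma_0^j(a)=0$ together with $\tau_{\cal Z}(a)$ within $\eta/k$ of the optimal value $1/k$ demands a careful interplay between the measure-theoretic Rokhlin tower and continuous-function approximation in the commutative subalgebra of ${\cal Z}$, after which everything must combine compatibly with the matrix-valued Extpro projection.
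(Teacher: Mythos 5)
Your overall architecture coincides with the paper's: produce a positive element of ${\cal Z}$ whose first $k$ translates under $\sigma$ are exactly mutually orthogonal and have total trace close to $1$, use Lemma \ref{appdiv} to get an almost central matrix algebra compressing ${\cal F}$, apply Lemma \ref{Extpro} to find $p_0$ inside the hereditary subalgebra generated by (matrix projection)$\,\cdot\,(1\otimes a)$, define $p_j=(\af\otimes\sigma)^j(p_0)$ so that the exact tower relation and exact orthogonality come for free from $a\,\sigma^j(a)=0$, and finish with Lemma \ref{afsmall} for the trace estimates. The only real divergence is the first step: the paper simply quotes Proposition 4.4 of \cite{Sato2} for the tower $f_j$ with $\sigma(f_j)=f_{j+1}$ and $\tau_{\cal Z}(1-\sum_j f_j)$ small, whereas you reprove it directly via the shift-invariant commutative subalgebra generated by positionwise copies of a positive contraction, the classical Rokhlin lemma for the Bernoulli shift, inner regularity and Urysohn. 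That construction is correct (you do not even need the spectrum to be $[0,1]$ or the distribution to be diffuse; any non-scalar positive contraction gives an essentially free ergodic Bernoulli shift), and it buys a self-contained argument at the cost of redoing part of \cite{Sato2}.

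The step that does not work as written is your appeal to ${\cal Z}$-stability, ``writing ${\cal Z}\cong{\cal Z}\otimes{\cal Z}$,'' to make the ${\cal Z}$-factor containing $a$ approximately central with respect to ${\cal F}$. Everything you need from $a$ --- the exact identities $a\,\sigma^j(a)=0$, hence the exact orthogonality of the $p_j=(\af\otimes\sigma)^j(p_0)$ --- is tied to the fixed factorization of ${\cal Z}$ on which the given $\sigma$ acts as a tensor shift; an abstract isomorphism ${\cal Z}\cong{\cal Z}\otimes{\cal Z}$ relocating $a$ into an approximately central copy of ${\cal Z}$ need not intertwine $\sigma$ with anything useful, and approximate centrality would anyway only yield approximate, not exact, orthogonality. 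The paper makes the opposite move, which you should adopt: approximate ${\cal F}$ (together with its translates $(\af\otimes\sigma)^{\pm j}({\cal F})$, $0\le j\le k$, which you need in the appdiv step anyway, since only $p_0$ is controlled directly and $\|[p_j,x]\|$ is estimated by conjugating back to $p_0$) by elements of a finite stage $A\bigotimes\bigotimes_{1\le j\le n}{\cal Z}$, and run your Bernoulli-shift construction of $a$ inside the $\sigma$-invariant tail $\bigotimes_{m\ge n+1}{\cal Z}$, on which $\sigma$ restricts to an automorphism of the same shift form and which commutes exactly with the chosen finite stage; this is precisely the role of the set ${\cal F}_1$ and the tail in the paper's proof. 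With that replacement your argument goes through and is essentially the paper's proof with a self-contained substitute for the citation to \cite{Sato2}.
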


\begin{proof}
Note that, by Corollary 8.4 of \cite{LnlocAH}, $A\otimes {\cal Z}\cong A.$
%Write ${\cal Z}$ as inductive limit of $\bigotimes_{k=1}^n {\cal Z},$
%to simplify the notation,
We may prove the part of the statement in the  theorem after ``Moreover" for  ${\cal F}\subset A.$

Write $Z=\bigotimes_{n\in \N}{\cal Z}$ as an inductive limit
of $\bigotimes_{1\le j\le n}{\cal Z}.$
To simplify notation,   if necessary, without loss of generality, we may assume that
${\cal F}$ is in the unit ball of $A\bigotimes \bigotimes_{1\le j\le n}{\cal Z}.$
Put $\bt_0=\af\otimes \otimes_{1\le j\le n}\sigma_0.$  Note that
$\af\otimes \sigma(x)=\bt_0(x)$ for all $x\in {\cal F}.$
We will continue to use $\sigma$ for $\sigma|_{\bigotimes_{m\ge n+1}{\cal Z}}$
and identify $\bigotimes_{m\ge n+1}{\cal Z}$ with ${\cal Z}$ whenever it is convenient.

Let $1/4k>\eta>0.$  It follows from Proposition 4.4 of \cite{Sato2} that
there are mutually orthogonal elements $f_0,f_2,...,f_{k-1}\in {\cal Z}$
with $0\le f_j\le 1$ ($0\le j\le k-1$) such that
\beq\label{2ML-2}
\sigma(f_j)=f_{j+1},\,\,\,j=0,1,...,k-1\tand
\tau_Z(1-\sum_{j=0}^{k-1} f_j)<\eta/16,
\eneq
where $\tau_Z$ is the unique tracial state on ${\cal Z}.$

Put $f_k=\sigma(f_{k-1}).$  Note that $\sigma$ is asymptotically inner.
Therefore $\tau(f_k)=\tau(f_0)$ for all $\tau\in T(A).$

Fix $\ep_1>0$ with  $\ep_1<\min\{\eta, \ep\}.$
There is $N$  given by \ref{Extpro} for $f_k$ and
$\ep_1/64k$ (in stead of $\ep$).

Define
$$
{\cal F}_1=\{\bt_0^j(x): x\in {\cal F}\andeqn -k\le j\le k\}.
$$
Put $A_1=A\bigotimes \bigotimes_{m\ge n+1}{\cal Z}.$  Note that $A_1\cong A\otimes {\cal Z}\cong A.$
It follows from \ref{appdiv} that
there is a projection $P_1\in A_1,$  a finite dimensional
\SCA\, $B_1\cong M_{N}$ of $A_1$ with $1_{B_1}=P_1$ and a finite subset ${\cal F}_2$ in the unit ball of $A_1$
such that
\beq\label{2ML-3}
\tau(1-P_1)<\ep_1/64\tforal \tau\in T(A),\\\label{2ML-3+1}
\|[P_1,\,x]\|<\ep_1/64,\,\,\, P_1xP_1\in_{\eta_1/64} {\cal F}_2\tforal x\in {\cal F}_1\andeqn\\\label{2ML-3+2}
yb=by\tforal x\in {\cal F}_2\andeqn b\in B_1.
\eneq

Put $C_1=B_1\otimes {\cal Z}.$
By applying \ref{Extpro}, we obtain projections
$p_k\in \overline{(P_1\otimes f_k)C_1(P_1\otimes f_k)}$ such that
\beq\label{2ML-4}
(t\otimes \tau_Z)(p_{k})>d_{\tau_Z}(f_{k})-\ep_1/64k=d_{\tau_Z}(f_0)-\ep_1/64k
\eneq
for the tracial state $t$ on $B_1.$
Let $D_j$ be hereditary \SCA\, generated by
$1_A\otimes f_j.$

Put $\bt=\bt_0\otimes \sigma$ and put
$p_{k-j}=\bt^{-j}(p_k).$ Then $p_j\in D_j$ and $p_0, p_1,...,p_{k-1}$  are mutually orthogonal and
\beq\label{2ML-5}
\bt(p_j)=p_{j+1},\,\,\,j=0,1,...,k-1.
\eneq

For each $\tau\in T(A),$ define
$t(b)={1\over{\tau(P_1)}}\tau(b)$ for $b\in B_1.$ Then
$t$ is the unique tracial state on $B_1.$
We also estimate that
\beq\label{2ML-6}
({1\over{\tau(P_1)}})(\tau\otimes \tau_Z)(p_0)> (t\otimes\tau_Z)(p_0)>d_{\tau_Z}(f_0)-\eta/64k.
\eneq
It follows that
\beq\label{2ML-7}
(\tau\otimes \tau_Z)(p_0)>(1-\eta/64)(d_{\tau_Z}(f_0)-\eta/64k)
\eneq
for all $\tau\in T(A).$
Since $\bt$ is an automorphism,  for any $\tau\in T(A),$
$(\tau\otimes \tau_Z)\circ \bt$ is also tracial state in $A\otimes {\cal Z}.$ Hence
\beq\label{2ML-7+}
(\tau\otimes\tau_Z)(\bt^j(p_0))&>&(1-\eta/64)(d_{\tau_Z}(f_0)-\eta/64k)\\
&>& (1-\eta/64)(d_{\tau_Z}(f_j)-\eta/64k)
\eneq
for all $\tau\in T(A).$
Therefore
\beq\label{2ML-8}
(\tau\otimes \tau_Z)(\sum_{j=0}^{k-1}\bt^j(p_0))&>&
(1-\eta/64)(\sum_{j=0}^{k-1} \tau_Z(f_j)-\eta/64)\\
&>&(1-\eta/64)(1-\eta/16)\\
&>&1-\eta/4\tforal \tau\in T(A).
\eneq

Finally, we note that since $p_k\in C_1,$ for any
$y\in {\cal F}_2,$
\beq\label{2ML-9}
p_ky=yp_k\tforal y\in {\cal F}_2.
\eneq
Let $x\in {\cal F}.$  There is $y\in {\cal F}_2$ such that
\beq\label{2ML-10}
\|y-P_1\bt_0^j(x)P_1\|<\eta_1/64.
\eneq
It follows that, for $x\in {\cal F},$
\beq\label{2ML-11}
p_{k-j}x &=&\bt^{-j}(p_k)\bt^{-j}(P_1)\bt^{-j}(\bt^j(x))=\bt^{-j}(p_kP_1\bt^j(x))\\
&\approx_{\eta_1/64}& \bt^{-j}(p_kP_1\bt^j(x)P_1)\approx_{\eta_1/64} \af^{-j}(p_ky)\\
&=& \bt^{-j}(yp_k)\approx_{\eta_1/64} \bt^{-j}(P_1\af^j(x)P_1p_k)\\
&\approx_{\eta_1/64} &\bt^{-j}(\bt^j(x)p_k)=x\bt^{-j}(p_k)=xp_{k-j},
\eneq
$j=1,2,...,k.$

\end{proof}

The proof of the following is based on, again, an argument of Kishimoto
which was also further clarified in  Lemma 4.3 of \cite{Sato1}.
We state here for the case that $A$ is not assumed to have real rank zero.
We should notice that if $\af$ has the Rokhlin property then
$\bt\circ \af$ may not have the Rokhlin property in general even in the case that $\bt$ is an
approximately inner automorphism.
%It is actually holds if the assumption is weakened to the case that
%$A$ is a unital separable simple \CA\, which is ${\cal Z}$-absorbing.

\begin{lem}\label{3ML}
Let $A$ be a unital separable amenable simple \CA\,
with $TR(A)\le 1,$ let
$\af\in \Aut(A)$ be an automorphism
and let $k\ge 1$ be an integer.
Suppose $\af$ has the following property:
there exists a central sequence of $\{p_n\}$ of projections
$\{p_n\}$ of $A$ such that
\beq\label{3ML-3}
\lim_{n\to\infty}\|\af^i(p_n)\af^j(p_n)\|=0\,\,\,{\rm if}\,\,\,i\not=j,\,\,\,0\le i,j\le k-1,\andeqn\\
\lim_{n\to\infty} \sup_{\tau\in T(A)}\tau(1-\sum_{j=0}^{k-1}\af^j(p_n))=0.
\eneq
Then,  for any $\ep>0$ and any finite subset ${\cal F}\subset A,$  there exist projections
$$e_1^{(0)},e_2^{(0)},...,e_{k-1}^{(0)},e_1^{(1)},e_2^{(1)},...,e_k^{(1)}\subset A$$
with $\sum_{j=1}^{k-1}e_j^{(0)}+\sum_{j=1}^k e_j^{(1)}=1_A,$ a
continuous path of unitaries $\{U(t): t\in [0,1]\}\subset A$  such that
\beq\label{3ML-1}
U(0)=1_A,\,\,\, \|[x,\, U(t)]\|<\ep\tforal x\in {\cal F}\andeqn
t\in [0,1],\\
\|{\rm Ad}\, U(1)\circ \af(e_j^{(i)})-e_{j+1}^{(i)}\|<\ep,\,\,\,
j=0,1,2...,k+i-2,\,\,\,i=0,1\\
\|[x,\,e_j^{(i)}]\|<\ep\tforal x\in {\cal F},\,\,\,j=0,1,...,k+i-2,\,\,\,i=0,1.
\eneq

\end{lem}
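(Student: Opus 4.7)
The plan is to upgrade the tracial height-$k$ Rokhlin property supplied by $\{p_n\}$ to a genuine two-tower (heights $k$ and $k+1$) Rokhlin decomposition for an inner perturbation $\mathrm{Ad}\,U(1)\circ\alpha$ of $\alpha$. The main technical ingredients are Lemma \ref{appdiv} (used to split $p_n$ via a central $M_{k+1}$) and Proposition \ref{PSI} (which converts an approximate trace match into a central partial isometry), both crucially relying on the $\mathcal{Z}$-stability of $A$ guaranteed by $TR(A)\le 1$ (Cor.~8.4 of \cite{LnlocAH}).

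\textbf{Construction.} Fix $\ep>0$ and ${\cal F}$, and choose $n$ so large that $p:=p_n$ nearly commutes with $\bigcup_{j=0}^{k-1}\alpha^j({\cal F})$, the iterates $\{\alpha^j(p)\}_{j=0}^{k-1}$ are mutually nearly orthogonal, and $q:=1_A-\sum_{j=0}^{k-1}\alpha^j(p)$ has uniformly arbitrarily small trace on $T(A)$.  Apply Lemma \ref{appdiv} inside $pAp$ (which inherits tracial approximate divisibility from $A$) to obtain a projection $P\le p$ with $\tau(p-P)$ small and a matrix subalgebra $M_{k+1}\subset PAP$ with $1_{M_{k+1}}=P$, whose minimal projections almost commute with $\bigcup_j\alpha^j({\cal F})$.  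Using strict comparison and the divisibility of $A$, choose a central sub-projection $g\le P$ with $\tau(\alpha^k(g))\approx\tau(q)$ for every $\tau\in T(A)$, and set
\[
e_j^{(1)}:=\alpha^j(g)\ (0\le j\le k-1),\ e_k^{(1)}:=q;\quad e_j^{(0)}:=\alpha^j(p-g)\ (0\le j\le k-1).
\]
These are mutually nearly orthogonal (from the near-orthogonality of $\{\alpha^j(p)\}$ and the orthogonality of $q$ to their sum), and after a standard small perturbation become exactly orthogonal with exact sum $1_A$; they still almost commute with ${\cal F}$. The shift relations $\alpha(e_j^{(i)})\approx e_{j+1}^{(i)}$ then hold automatically for all pairs except the final one in the $(k+1)$-tower, where $\alpha(e_{k-1}^{(1)})=\alpha^k(g)$ must be matched to $e_k^{(1)}=q$.

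\textbf{The unitary and the path.} Since $\alpha^k(g)\subset\alpha^k(p)$ and $q$ are almost-central projections lying in disjoint regions of $A$ with $\tau(\alpha^k(g))\approx\tau(q)$ for all $\tau$, Proposition \ref{PSI} (or a direct invocation of the Matui--Sato technique \cite{MS} in the $\mathcal{Z}$-stable central sequence algebra) supplies a central partial isometry $w$ with $w^*w\approx\alpha^k(g)$, $ww^*\approx q$, and automatically $w^2=0$ from the disjointness of source and range.  Define $U(1):=w+w^*+(1-w^*w-ww^*)$; a direct computation using $we=w$ and $w^*e=0$ (for $e:=w^*w$) yields $U(1)\alpha^k(g)U(1)^*=q$, so $\mathrm{Ad}\,U(1)\circ\alpha$ satisfies the missing shift relation.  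Since $U(1)$ is built from approximately central ingredients it almost commutes with ${\cal F}$, and it commutes \emph{exactly} with every $e_j^{(i)}$ whose support is orthogonal to $\alpha^k(g)+q$, covering all the other required transitions.  Setting $U(t):=\exp(ith)$ with $h:=-i\log U(1)$ (principal branch) delivers the continuous path from $U(0)=1_A$ to $U(1)$ with $\|[x,U(t)]\|<\ep$ throughout.

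\textbf{Main obstacle.} The crux lies in the uniform trace-matching: arranging $\tau(\alpha^k(g))\approx\tau(q)$ \emph{simultaneously} for every $\tau\in T(A)$ while keeping $g$ inside the refined central subalgebra and almost central for ${\cal F}$. Because $\alpha$ in general acts non-trivially on $T(A)$ and $\tau\circ\alpha^k\ne\tau$, one cannot just prescribe $\tau(g)=\tau(q)$ pointwise.  The resolution is to exploit the fact that $\mathcal{Z}$-stability lets us realize any function in $\mathrm{Aff}(T(A))$ as the trace of a central sub-projection to arbitrary precision, combined with strict comparison to promote an approximate trace equality to an approximate Murray--von Neumann equivalence inside the central sequence algebra.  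Executing this matching uniformly across the whole trace simplex, rather than by the projection-manipulations available in the real-rank-zero settings of Kishimoto and Sato \cite{Sato1}, is the principal technical burden and is what forces the interleaving of the divisibility splitting with the Rokhlin construction above.
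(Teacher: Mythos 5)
Your overall device---closing the towers with a symmetry $U(1)=w+w^*+(1-w^*w-ww^*)$ built from an approximately central partial isometry, and running the path through its spectral projection $F=(1+U(1))/2$---is exactly the mechanism of the paper's proof. But the step that actually closes the top of the tall tower has a genuine gap. You prescribe the tall tower in advance as $\alpha^j(g)$, $0\le j\le k-1$, together with $q$, and then need a \emph{central} partial isometry $w$ with both legs prescribed, $w^*w\approx\alpha^k(g)$ and $ww^*\approx q$, where \emph{both} projections have uniformly small trace. Proposition \ref{PSI} does not provide this: its conclusion controls only the source ($w_n^*w_n\approx e_n$) and merely dominates the range by a projection $p_n$ whose traces are bounded \emph{away from zero}; that hypothesis fails when the receiving projection is $q$ (or $\alpha^k(g)$). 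What you would need is a statement that two approximately central projections with (approximately) equal traces are approximately equivalent \emph{centrally}, which is strictly stronger than \ref{PSI} and is not established in the paper; strict comparison alone only gives subequivalence from strict trace inequalities and gives no centrality. Relatedly, your route to $g$ --- ``$\mathcal Z$-stability lets us realize any function in $\Aff(T(A))$ as the trace of a central sub-projection'' --- is false in general for $TR(A)\le 1$: traces of projections lie in $\rho_A(K_0(A))$, which need not be dense in $\Aff(T(A))$. One could instead take $g\sim\alpha^{-k}(q)$ by strict comparison, but making such a $g$ approximately central and under $p$ is again a \ref{PSI}-type problem, and it still leaves the two-small-projections matching unresolved; the detour through Lemma \ref{appdiv} and the matrix algebra $M_{k+1}$ does not help with any of this (and is not used in the paper's argument).

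The paper's proof avoids the difficulty by never matching two small projections. Working in $l^{\infty}(A)/c_0(A)$, it lifts $\{\alpha^j(p_n)\}$ to exactly orthogonal projections $p_{j,n}$, applies \ref{PSI} with the small projections $q_n=1-\sum_j p_{j,n}$ and the receiving projections $\alpha(p_{k-1,n})$ (whose traces are bounded below by about $1/k$, so the hypothesis of \ref{PSI} holds) to obtain $w$ with $w^*w=\pi(\{q_n\})$, $ww^*\le\pi(\{\alpha(p_{k-1,n})\})$ and $w^2=0$, forms the symmetry $V=2F-1$ and the path $f_n+e^{i\pi t}(1-f_n)$, and then \emph{defines} the tall tower as the backward orbit of $q_n$ under the perturbed map $\Lambda_n=\alpha^{-1}\circ{\rm Ad}\,u_n'$, namely ${\bar e}_j^{(1)}=\pi(\{\Lambda_n^{k-j}(q_n)\})$, with $e_{j,n}^{(0)}=p_{j,n}-e_{j,n}^{(1)}$. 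The shift relations, including the top transition, then hold by construction, and no trace-matching between small projections is ever required. If you reorganize your argument this way---build the tall tower from $w$ instead of prescribing it and then hunting for a $w$ with both legs fixed---the remaining parts of your outline (the symmetry, the exponential path, exact orthogonalization and summing to $1$) do go through.
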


\begin{proof}
%Fix an integer $k\ge 1.$  Since $\af$ is assumed to have tracial Rokhlin property, we obtain a central sequence of projections
%$\{p_n\}$ of $A$ such that
%\beq\label{3ML-3}
%\lim_{n\to\infty}\|\af^i(p_n)\af^j(p_n)\|=0\,\,\,{\rm if}\,\,\,i\not=j,\,\,\,0\le i,j\le k-1,\andeqn\\
%\lim_{n\to\infty} \sup_{\tau\in T(A)}(1-\sum_{j=0}^{k-1}\af^j(p_n))=0.
%\eneq

 Let $l^{\infty}(A)=\prod_n^{\infty}A$ be the $C^*$-algebra product of infinite copies of $A$ and $c_0(A)=\bigoplus_{n=1}^{\infty} A$ be the $C^*$-algebra
 direct sum of infinite copies of $A.$ Denote by
 $A_d=\{(a,a,...,a,...): a\in A\},$  a \SCA\, of $l^{\infty}(A).$
 Let $q(A)=l^{\infty}(A)/c_0(A)$ and let $\pi: l^{\infty}(A)\to c_0(A)$ be the quotient map.
 It is standard that there are, for each $n,$  mutually orthogonal projections
$\{p_{j,n}: 0\le j\le k-1\}$ such that
\beq\label{3ML-4-1}
\pi(\{p_{j,n}\})=\pi(\{\af^j(p_n)\}),\,\,\,j=0,1,2,...,k-1.
\eneq
Let
$q_n=1-\sum_{j=0}^{k-1}p_{j,n},$ $n=1,2,....$
 Then $\pi(\{p_{j,n}\})$ and $\pi(\{q_n\})$ are in
 $q(A)\cap \pi(A_d)'.$
 Note that, by \ref{afsmall},
 \beq\label{3ML-4}
\liminf_{n\to\infty} \inf_{\tau\in T(A)}\tau(p_{j,n})=\liminf_{n\to\infty}\inf_{\tau\in T(A)} \tau(p_n)\ge 1/k\andeqn
 \lim_{n\to\infty}\sup_{\tau\in T(A)}\tau(q_n)=0.
 \eneq
 It follows from \ref{PSI} that there exists a central sequence of partial isometries
 $\{w_n\}\in l^{\infty}(A)$ such that
 \beq\label{3ML-5}
 \lim_{n\to\infty}\|w_n^*w_n-q_n\|=0,\,\,\,
 \lim_{n\to\infty}\|\af(p_{k-1,n})w_n-w_n\|=0\andeqn \lim_{n\to\infty}\|w_n^2\|=0.
 \eneq
 Put $p_{k,n}=\af(p_{k-1,n}),$ $n=1,2,....$ Let $w=\pi(\{w_n\}).$ Then $w$ is a partial isometry in $q(A)$ such that
 \beq\label{3ML-6}
 w^*w=\pi(\{q_n\}),\,\,\, ww^*\le \pi(\{p_{k,n}\})\andeqn w^2=0.
 \eneq
 Let $V=w+w^*+1-w^*w-ww^*\in q(A)$
 and $u_n'=w_n+w_n^*+(1-q_n-w_nw_n^*),$ $n=1,2,....$
 Note that $\pi(\{u_n'\})=V.$ Then $V$ is a unitary in $q(A).$
 Moreover, $V\in q(A)\cap \pi(A_d)'.$
 Put $E=\pi(\{q_n\}+\{p_{0,n}\}).$ Then $E$ is a projection in $q(A)\cap \pi(A_d).$
 It is important to note, since $w^2=0,$ $ww^*\pi(\{q_n\})=\pi(\{q_n\})ww^*=0.$
 Since $p_{0,n}, p_{1,n},...,p_{k-1,n}$ mutually orthogonal and
 \beq\label{3ML-7}
 \pi(\{q_n\})+\sum_{j=0}^{k-1}\pi(\{p_{j,n}\})=1_{q(A)},
 \eneq
 \beq\label{3ML-8}
 ww^*\le \sum_{j=0}^{k-1}\pi(\{p_{j,n}\}).
 \eneq
 Since
 \beq\label{3ML-9}
 \pi(\{\af(p_{j,n}\})\pi(\{\af(p_{k-1,n}\})=0\tforal j=0,1,...,k-2,
 \eneq
 \beq\label{3ML-10}
 ww^*\pi(\{p_{j,n}\})=\pi(\{p_{j,n}\})ww^*=0\tforal j=1,2,...,k-1.
 \eneq
 Combining (\ref{3ML-8}) and (\ref{3ML-9}),
 we have
 \beq\label{3ML-11}
 ww^*\le p_{0,n}.
 \eneq
 It follows that
 $V(1-E)=1-E.$  By (\ref{3ML-6}), we also have
 \beq\label{3ML-18}
\pi(\{q_n\})\le w^*\pi(\af(\{p_{k-1,n}\}))w.
 \eneq
 Since $V$ is selfadjoint, $sp(V)\subset \{-1, 1\}.$ It follows that there is a projection $F\in q(A)$ such that $V=F-(1-F)=2F-1.$  It follows that
 $F\in q(A)\cap \pi(A_d)'.$ Therefore there exists
 a central sequence of projections $\{f_n\}$ in $A$ such
 that $\pi(\{f_n\})=F.$ For each $n,$ define
 $U_n(t)=f_n+e^{\pi i t}(1-f_n)$ for $t\in [0,1].$ Then
 $\{U_n(t): t\in [0,1]\}$ is a continuous path of unitaries in
 $A$ such that $U_n(0)=1_A$ and $U_n(1)=f_n-(1-f_n).$
 Moreover $\pi(\{U_n(1)\})=V.$
 Define a \morp\, $\Lambda_n:A\to A$ by
 $\Lambda_n(a)=\af^{-1} (u_n'a(u_n')^*)$ for all $a\in A.$ (Note that
 $u_n'$ is not exactly a unitary.)
Note that $\pi\circ \{\Lambda_n\}$ is an automorphism on $\pi(A_d).$
 Define ${\bar e}_j^{(1)}=\pi(\Lambda_n^{k-j}(q_n)\}),$ $j=0,1,...,k.$ Note that ${\bar e}_j^{(1)}\in q(A)$ is a projection and ${\bar e}_k^{(1)}=\pi(\{q_n\}).$
  We compute that
 $$
 {\bar e}_{k-1}^{(1)}=\pi(\{\af^{-1}(u_n'q_n(u_n')^*)\})\le\pi(\{\af^{-1}\circ \af(p_{k-1,n})\})=\pi(\{p_{k-1,n}\}).
 $$
 Since $V(1-E)=1_E,$ we conclude that
 \beq\label{3ML-19}
 {\bar e}_j^{(1)}\le \pi(\{\af^{j-k})(p_{k-1,n})\})=\pi(\{p_{j,n}\}),
 \eneq
 $j=2,3,...k-1.$
 In particular, $\{ {\bar e}_j^{(1)}: 1\le j\le k\}$ is a set of mutually
 orthogonal projections. Therefore, we obtain, for each $j,$  a sequence of
 projections $e_{j,n}^{(1)}\in A$ such that $e_{j,n}^{(1)}\le p_{j,n}$
 for all $n,$ $\pi(\{e_{j,n}^{(1)}\})={\bar e}_j^{(1)},$ and $j=0,1,2,...,k-1$ and $e_{k,n}^{(0)}=q_n,$
 $n=1,2,....$ In particular, $\{e_{j,n}^{(1)}: 0\le j\le k\}$ is a set of mutually orthogonal projections for each $n.$
 Moreover,
 \beq\label{3ML-20}
\pi(\{{\rm Ad}\, u_n'\circ \af(e_{j,n}^{(1)})\})=\pi(\{e_{j+1,n}^{(1)}\}),\,\,\,
j=0,1,...,k-1.
 \eneq
 Define
 $$
 e_{j,n}^{(0)}=p_{j,n}-e_{j,n}^{(1)},\,\,\, j=0,1,...,k-1.
 $$
By (\ref{3ML-20}) and (\ref{3ML-4-1}),
\beq\label{3ML-11+}
{\rm Ad}\,V(\pi(\{\af(e_{j,n}^{(0)})\})=\pi(\{e_{j+1}^{(0)}\}),\,\,\,
j=0,1,2,...,k-2.
\eneq
Note that
\beq\label{3ML-12}
\sum_{j=1}^{k-1}e_{j,n}^{(0)}+\sum_{j=1}^k e_{j,n}^{(1)}=1.
\eneq
%Define $U(t)=E+e^{\pi i t} (1-E).$ Then $U(0)=1_{q(A)}$ and $U(1)=V.$

Now, fix an $\ep>0$ and a finite subset ${\cal F},$ the lemma follows
by taking $e_{j,n}^{(0)},$ $ e_{j,n}^{(1)}$  and
$U_n(t)$ for sufficiently large $n.$

\end{proof}

\begin{NN}\label{Dsigma}
{\rm Let $B_0=\C,$ $B_k=\bigotimes_{j=1}^{k} {\cal Z},$ the tensor product of $k$ copies of ${\cal Z},$
$k=1,2,....$
Denote  $C_k=\bigotimes_{j\ge k+1} {\cal Z},$  $D_1=\C,$  $D_n=\bigotimes_{j=1}^{n-1} B_j,$ $n=2, 3,....$

In what follows, we write ${\cal Z}=\bigotimes_{n\in \N} {\cal Z}=D_k\otimes C_{k(k-1)/2}.$
 Define
%$\sigma_k=\sigma|_{B_k},$ $k=1,2,...,$ and
$\sigma^c_k=\sigma|_{C_k},$ $k=2,3,...$
}
\end{NN}

\begin{thm}\label{Mext}
Let $A$ be a unital separable amenable simple \CA\,  with $TR(A)\le 1$ and let $\af\in \Aut(A).$ Then
there exists an automorphism ${\tilde \af}\in \Aut(A)$ which has the Rokhlin
property such that
$\af$ and ${\tilde \af}$ are strongly asymptotically unitarily equivalent.
\end{thm}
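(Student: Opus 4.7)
The plan is to adapt Sato's construction (known for the $TR(A)=0$ case) to $TR(A)\le 1$, using Lemmas \ref{Text1}, \ref{3ML}, and \ref{appdiv} as substitutes for real-rank-zero tools.

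First I would use $A\cong A\otimes {\cal Z}$ (which holds by Corollary~8.4 of \cite{LnlocAH}) and fix a decomposition ${\cal Z}=\bigotimes_{n\in\N}{\cal Z}$ with the shift $\sigma$ of \ref{dJS}. Since $\sigma$ is strongly asymptotically inner, $\af\otimes\sigma$ is strongly asymptotically unitarily equivalent to $\af\otimes\id\cong\af$. Thus it suffices to produce $\tilde\af$ with the Rokhlin property that is strongly asymptotically unitarily equivalent to the base automorphism $\af_0:=\af\otimes\sigma$ on $A\otimes{\cal Z}$.

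Next I would build $\tilde\af$ by an intertwining argument. Fix finite subsets $\mathcal F_1\subseteq \mathcal F_2\subseteq\cdots$ with dense union in the unit ball of $A$, integers $k_n\uparrow\infty$, and tolerances $\ep_n$ with $\sum\ep_n<\infty$. Inductively choose unitaries $u_n\in U(A\otimes{\cal Z})$ with $\|u_n-1\|<\ep_n$ and set $\af_n=\mathrm{Ad}\,u_n\circ\af_{n-1}\circ\mathrm{Ad}\,u_n^{-1}$ in such a way that $\af_n$ admits a Rokhlin tower of height $k_n$ matching \ref{dRK} to tolerance $\ep_n$ on $\mathcal F_n$ (enlarged to also contain the earlier towers' projections). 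To construct $u_n$, apply Lemma \ref{Text1} inside the still-unused tensor factor (using the decomposition ${\cal Z}=D_{k_n}\otimes C_{k_n(k_n-1)/2}$ of \ref{Dsigma}) to obtain a central sequence of projections for $\af_{n-1}$ with the approximate equivariance and tracial smallness hypothesized in Lemma \ref{3ML}; then Lemma \ref{3ML} produces a continuous path $\{U(t):t\in[0,1]\}$ with $U(0)=1$ whose endpoint conjugates $\af_{n-1}$ into the desired approximate Rokhlin form. I would take $u_n=U(1)$; the length control in \ref{3ML} plus a standard diagonal choice of tolerances at each stage lets one guarantee $\|u_n-1\|<\ep_n$.

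Then I would take $\tilde\af(a)=\lim_n\af_n(a)$, which exists since the perturbation at stage $n$ alters $\af$ by $O(\ep_n)$ on $\mathcal F_n$ and $\bigcup\mathcal F_n$ is dense. Writing $W_n=u_nu_{n-1}\cdots u_1\in CU$-small-neighbourhood of $1$, we have $\af_n=\mathrm{Ad}\,W_n\circ\af_0\circ\mathrm{Ad}\,W_n^{-1}$; interpolating $W_n$ and $W_{n+1}$ along the short path $\exp(ith_n)W_n$ where $W_{n+1}W_n^{-1}=\exp(ih_n)$ and $\|h_n\|<2\ep_{n+1}$ yields a continuous path $W:[0,\infty)\to U(A\otimes{\cal Z})$ with $W(0)=1$ and $\mathrm{Ad}\,W(t)\circ\af_0\circ\mathrm{Ad}\,W(t)^{-1}\to\tilde\af$ pointwise, which is the desired strong asymptotic unitary equivalence. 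Verifying the Rokhlin property of $\tilde\af$ is then routine: for any prescribed $k,\ep,\mathcal F$, pick $n$ with $k_n\ge k$, $\mathcal F\subset\mathcal F_n$, and $\sum_{m>n}\ep_m<\ep/4$, and use the towers built for $\af_n$.

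The main obstacle is the non-commutativity of the inductive perturbations: each $u_n$ risks destroying the (approximate) Rokhlin towers built for $\af_{n-1},\af_{n-2},\ldots$, and the intertwining must therefore enlarge $\mathcal F_n$ at each step to include those earlier projections so that $u_n$ almost commutes with them. Controlling this requires carefully choosing the tolerances (and the finite subsets) at stage $n$ \emph{before} executing stage $n-1$'s final choices, which is the standard but delicate bookkeeping of an approximate intertwining. A secondary subtlety is that the upgrade from tracial Rokhlin (\ref{Text1}) to genuine Rokhlin (\ref{3ML}) relies on the central-sequence partial isometry of \ref{PSI}, whose existence ultimately rests on ${\cal Z}$-stability of $A$; this is where the hypothesis $TR(A)\le 1$ (through Corollary~8.4 of \cite{LnlocAH}) is essential.
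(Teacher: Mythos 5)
Your overall strategy (pass to $\af\otimes\sigma$ on $A\otimes{\cal Z}$, then improve it stage by stage using \ref{Text1} and \ref{3ML} in fresh tensor factors, protecting earlier towers by enlarging the finite sets) is indeed the paper's strategy, but the way you perturb at each stage is wrong in two related respects. First, you set $\af_n={\rm Ad}\,u_n\circ\af_{n-1}\circ{\rm Ad}\,u_n^{-1}$, a conjugation. Lemma \ref{3ML} does not ``conjugate $\af_{n-1}$ into Rokhlin form'': its conclusion is $\|{\rm Ad}\,U(1)\circ\af(e_j^{(i)})-e_{j+1}^{(i)}\|<\ep$, i.e.\ the tower is approximately cyclically permuted by the cocycle perturbation ${\rm Ad}\,U(1)\circ\af$, not by the conjugate ${\rm Ad}\,U(1)\circ\af\circ{\rm Ad}\,U(1)^{-1}$. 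Moreover, conjugation can never create the Rokhlin property: if $\|u_n-1\|<\ep_n$ with $\sum_n\ep_n<\infty$, the products of the $u_n$ converge in norm to a unitary $W$, and your limit automorphism is an honest unitary conjugate of $\af\otimes\sigma$, which has the Rokhlin property if and only if $\af\otimes\sigma$ itself does --- and that is exactly what is neither known nor claimed. Lemma \ref{Text1} only yields towers with a small but nonzero tracial remainder, and absorbing that remainder is precisely what forces a genuine change of the automorphism by an inner cocycle.

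Second, the norm estimate $\|u_n-1\|<\ep_n$ is unattainable: in the proof of \ref{3ML} the endpoint $U(1)=2f_n-1$ is a symmetry (it flips the leftover $q_n$ into the top level of the tower), so it sits at distance about $2$ from $1$, and \ref{3ML} contains no length control that could make it small. The smallness one actually has --- and all one needs --- is that the whole path $\{U(t)\}$ almost commutes with a prescribed finite set. The paper's proof runs on exactly this: the new automorphism at stage $(n,k)$ is the composition $\af_{n,k}'=({\rm Ad}\,w_{n,k}(1)\circ\af_{n,k-1}')\otimes\sigma_0$ (a fresh shift factor is tensored on so that the hypothesis of \ref{3ML} persists for the larger tower height), pointwise convergence of the resulting sequence follows from $\|[w_{n,k}(t),x]\|<1/2^{n+k}$ for $x$ in the accumulated sets ${\cal G}_{n,k-1}$ (which contain the images of the earlier towers, so they are preserved), and the strong asymptotic unitary equivalence with $\af\otimes\sigma$ is witnessed by concatenating the paths $w_{n,k}(t)$, each of which starts at $1$ --- not by norm-small increments. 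So the bookkeeping issue you flag is real, but the construction must replace conjugation by composition and abandon the requirement $\|u_n-1\|<\ep_n$; as written, your scheme can only reproduce a conjugate of $\af\otimes\sigma$ and therefore does not prove the theorem.
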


\begin{proof}
Since $A\cong A\otimes {\cal Z}$ and ${\cal Z}\otimes {\cal Z}$ is asymptotically unitarily equivalent to ${\cal Z},$ $\af$ and $\af\otimes \sigma$ are strongly asymptotically unitarily equivalent.

%To simplify the notation, without loss of generality, by \ref{Text1}, we may assume that $\af$ has the tracial Rokhlin
%property.

%Let $\{{\cal F}_n\}$ be an increasing family  of finite subsets of the unit ball of $A$ such that $\cup_{n=1}^{\infty} {\cal F}_n$ is dense in the unit ball of $A.$
%As in \ref{Dsigma}, we replace $A$ by  $A\bigotimes (\bigotimes_{n\in \N}C_n).$  Denote by $D^c_{n,k}=\otimes_{j=k+1}^{n+1}B_i\otimes\otimes_{i>n+1}C_i,$
%$k=0,1,...,n,$ $n=1,2,....$
%Note that $D^c_{n,n+1}=\otimes_{i>n+1}C_i.$
%We also replace  $\af$ by
%$\af\otimes {\bar \sigma}.$
%Denote by ${\tilde \sigma}_n=\otimes_{i=1}^n \sigma_i$ on
%$\otimes_{i=1}^n B_i.$
%We note that $\af\otimes \sigma=\af\otimes {\bar \sigma}_n\otimes {\tilde \sigma}_n.$
For each $n$ and $k\le n,$  let $\{{\cal F}_{n,k}\}$ be an increasing sequence of
finite subsets of the unit ball of
$$A\bigotimes D_n\bigotimes  B_k\bigotimes 1_{C_{(n/2)(n-1)+k}}.$$
We may assume, without loss of generality,  that ${\cal F}_{n,k+1}\supset {\cal F}_{m,k}$ for all $k$ and $n\ge m,$  and
$\cup_{k=1}^{\infty}{\cal F}_{k,k}$ is dense in the unit ball of $A\otimes {{\cal Z}}.$

Put $\af_{1,1}=\af\otimes \sigma$ and $\af_{1,1}'=\af_{1,1}|_{A\otimes B_1}.$
Suppose that we have constructed two sets of projections
$\{e_{j,n,k}^{(0)}\},$ $j=0,1,...,k-1,$ and $\{e_{j,n,k}^{(1)}\},$ $j=0,1,2,...,k,$ in $A\bigotimes D_n\bigotimes B_k\bigotimes  1_{C_{(n/2)(n-1)+k}},$
a  continuous path of unitaries $\{w_{n,k}(t): t\in [0,1]\}$ with
$w_{n,k}(0)=1$  in $A\bigotimes D_n\bigotimes B_k\bigotimes  1_{C_{(n/2)(n-1)+k}},$ an
automorphism $\af_{n,k}'$ on
$A\bigotimes D_n\bigotimes B_k\bigotimes  1_{C_{(n/2)(n-1)+k}}$  and a
finite subset\\
 ${\cal G}_{n,k}\subset A\bigotimes D_n\bigotimes B_k\bigotimes 1_{C_{(n/2)(n-1)+k}}$ such that
\beq\label{Mext-2}
\hspace{-1in}&&\hspace{-0.4in}\sum_{j=1}^{k-1}e_{j,n,k}^{(0)}+\sum_{j=1}^k e_{j,n,k}^{(1)}=1;\\\label{Mext-2+1}
&&\hspace{-0.6in}\|[x,\,e_{j,n,k}^{(i)}]\|<1/(n+k)\tforal x\in {\cal G}_{n,k-1},\,\,\,
j=0,2,...,k-i+1\andeqn i=0,1,\\\label{Mext-2+2}
&&\hspace{-0.6in}\|{\rm Ad}\, w_{n,k}(1)\circ \af_{n,k-1}'(e_{j,n,k}^{(i)})-e_{j+1,n,k}^{(i)}\|<1/(n+k),\,\,\,j=0,1,...,
k-2+i,\,\,\, i=0,1,\\\label{Mext-2+3}
&&\hspace{-0.6in}\|[w_{n,k}(t),\, x]\|<1/2^{n+k},\,\,\, x\in {\cal G}_{n,k-1}\andeqn t\in [0,1],\\\label{Mext-2+4}
&&\hspace{-0.6in}{\cal G}_{n,k}\supset \cup_{j=1}^k{\cal F}_{n,j}\cup\af_{n,k}({\cal G}_{n,k-1})\cup
\{\af_{n,k}(e_{j,n,k}^{(i)}): i,j\},
\eneq
where $\af_{n,k}'= ({\rm Ad}\, w_{n,k}\circ \af_{n,k-1}')\otimes \sigma_0$
(defined on $A\bigotimes D_n\bigotimes B_k\bigotimes  1_{C_{(n/2)(n-1)+k+1}}$),
where we identify $B_k$ with $B_{k-1}\bigotimes {\cal Z},$
$\af_{n,n}'=\af_{n+1, 0}',$
$k=1,2,...,n_1\le n,$ $n=1,2,...,m.$  Note also, we will write $A\otimes D_{n+1}\otimes C_{(n/2)(n+1)}$ instead of $A\otimes D_n\otimes B_n\otimes C_{(n/2)(n+1)}$ and
${\cal G}_{n,n}={\cal G}_{n+1,0},$ $n=1,2,....$
It is important to note that
$\af_{m,n_1}'$ satisfies the assumption of \ref{3ML} for $k=n_1,$ by applying \ref{3ML} on $\af_{m,n_1}',$ we obtain (\ref{Mext-2}),
(\ref{Mext-2+1}),(\ref{Mext-2+2}), (\ref{Mext-2+3}) and (\ref{Mext-2+4})
for $k=n_1+1.$  By induction, we obtain (\ref{Mext-2}),
(\ref{Mext-2+1}),(\ref{Mext-2+2}), (\ref{Mext-2+3}) and (\ref{Mext-2+4})
for all $k=n_1+1,$ if $n_1<n.$ If $k=n_1=n,$
we also set $\af_{n+1,0}'=\af_{n,n}'$ and, as above, obtain $\{w_{n+1,1}(t): t\in [0,1]\}$  with $w_{n+1,1}(0)=1$ and
$\af_{n+1,1}'={\rm Ad}\, w_{n+1,1}(1)\circ \af_{n+1,0}'\otimes \sigma_0.$

Define $\af_{n,k}=\af_{n,k}'\otimes \sigma|_{C_{(n/2)(n-1)+k}}.$

Consider the sequence  of automorphisms
$\{\beta_m\}$ as follows: $\af_{1,1},$
$\af_{2,1},$ $\af_{2,2},$ ...,
$\af_{n-1, n-1}=\af_{n,0},$ $\af_{n,1},$ $\af_{n,2},...,$
$\af_{n,n}=\af_{n+1,0},$ $\af_{n+1,1},....$

We will show that $\beta_m(x)$ is Cauchy for all $x\in A\otimes {\cal Z}.$
In fact, if $x\in {\cal G}_{n,k-1}$ and $m\ge n>k,$ by (\ref{Mext-2+3}),
\beq\label{Mext-3}
&&\|\af_{m,k}(x)-\af_{m,k+1}(x)\|<1/2^{m+k}\andeqn {\rm if}\,\,\, m=n=k,\\\label{Mext-3+1}
&&\|\af_{n+1,0}(x)-\af_{n+1,1}(x)\|<1/2^{n+1}.
\eneq
It follows that, with $m\ge k', n$ and $n\ge k,$
\beq\label{Mext-3+2}
\|\af_{m,k'}(x)-\af_{n,k}(x)\|<{1\over{2^{n-1}}}\le {1\over{2^{{(n+k)/2-1}}}}\tforal x\in {\cal G}_{n,k-1}.
\eneq
Since $\cup_{n=1}^{\infty}\cup_{k=1}^{n} {\cal G}_{n,k}$ is dense in the unit ball of $A,$
we conclude that $\{\bt_m(x)\}$ is Cauchy for all $x\in A.$

Define ${\tilde \af}(x)=\lim_{m\to\infty}\bt_m(x).$ It is an endomorphism. Similar to
the above estimates as in (\ref{Mext-3}) and (\ref{Mext-3+1}), if $x\in {\cal G}_{n,k}$ and $m\ge n>k,$ then
\beq\label{Mext-4}
\|\af_{m,k}^{-1}(x)-\af_{m,k}^{-1}\circ {\rm Ad}w_{m,k+1}^*(1)(x)\|<1/2^{m+k}\andeqn {\rm if}\,\,\,n=k,\\
\|\af_{n+1,0}^{-1}(x)-\af_{n+1,0}^{-1}\circ {\rm Ad}\, w_{n+1,1}^*(1)(x)\|<1/2^{n+1}.
\eneq
Note that $\af_{n,k+1}^{-1}=\af_{n,k}^{-1}\circ {\rm Ad}\, w_{n,k+1}^*$ (if $n>k$) and
$\af_{n+1,1}^{-1}=\af_{n+1,0}^{-1}\circ {\rm Ad}\, w_{n+1,1}^*.$
It follows that, if $m\ge n, k$ and $n\ge k',$
\beq\label{Mext-5}
\|\af_{m,k}^{-1}(x)-\af_{n,k'}^{-1}(x)\|<2^{n-1}.
\eneq
Therefore $\{\bt_m^{-1}(x)\}$ is Cauchy for all $x\in A.$
We obtain an endomorphism ${\tilde \af}^{-1}$ such that
${\tilde \af}^{-1}(x)=\lim_{m\to\infty} \bt_m
^{-1}(x)$ for all
$x\in \cup_{k=1}^{\infty}{\cal G}_{k,k}.$ It follows that
${\tilde \af}^{-1}\circ {\tilde \af}={\rm id}_A={\tilde \af}\circ {\tilde \af}^{-1}.$ Thus ${\tilde \af}$ is an automorphism.

Fix $k\ge 1,$ $\ep>0$ and a finite subset ${\cal F},$ by (\ref{Mext-2+1}),
there exists an integer $n_1\ge 1$ such that
\beq\label{Mext-6}
\|[e_{j,n,k}^{(i)},\, x]\|<\ep\,\,\, j=0,1,...,k+i-1,\,\,\, i=0,1,
\eneq
for all $n\ge n_1.$
We may also assume that $1/n_1<\ep/4$ and $n_1\ge k.$
 Then, if $n\ge n_1,$
\beq\label{Mext-7}
e_{j+1,n,k}^{(i)}\approx_{1/(n+k)} {\rm Ad}\, w_{n, k}(1)\circ \af_{n,k-1}(e_{j,n,k}^{(i)})=\af_{n,k}(e_{j,n,k}^{(i)})\\
\approx_{2^{-(n+k+1)}}  {\rm Ad}\, w_{n,k+1}(1)\circ \af_{n,k}(e_{j,n,k}^{(i)})=\af_{n,k+1}(e_{j,n,k}^{(i)}),
\eneq
$j=0,1,...,k+i-2,\,\,\, i=0,1.$
If $n=k+l,$
\beq\label{Mext-8}
&&e_{j+1,n,k}^{(i)}
\approx_{1/(n+k)+\sum_{m=n+k+1}^{n+k+l}2^{-m}}\af_{n,k+l}(e_{j,n,k}^{(i)})\\
&&\approx_{1/2^{n+1}} {\rm Ad}\, w_{n+1,1}(1) \circ\af_{n+1,0}(e_{j,n,k}^{(i)})\\
&=&\af_{n+1,1}(e_{j,n,k}^{(i)}),
\eneq
or,
\beq\label{Mext-8+0}
&&e_{j+1,n,k}^{(i)}\approx_{1/(n+k)+2^{-(n+k)}+2^{-(n+k)/2\,-1}}\af_{n+1,1}(e_{j,n,k}^{(i)}),
\eneq
$j=0,1,...,k+i-2,\,\,\,i=0,1.$
It follows that (using also (\ref{Mext-3+2})), if $m\ge 2(n+1),$
\beq\label{Mext-8+}
\|e_{j+1,n,k}^{(i)}-\bt_m(e_{j,n,k}^{(i)})\|<1/(n+k)+1/2^{(n+k)/2} +1/2^{(n+k)/2-1}
\eneq
for $j=0,1,...,k+i-2,$ $i=0,1.$
Therefore
\beq\label{Mext-9}
\|{\tilde \af}(e_{j,n,k}^{(i)})-e_{j+1,n,k}^{(i)}\|<4/(n+k).
\eneq
Thus ${\tilde \af}$ has the Rokhlin property.
To show ${\tilde \af}$ is asymptotically unitarily equivalent to $\af\otimes \sigma,$ define $w(t)$ as follows:
\beq\label{Mext-10}
w(t)=w_{n,k}(nt-n(n-1)-(k-1))w_{n,k-1}(1)\cdots w_{1,1}\\
{\rm for}\,\,\, n-1+{k-1
\over{n}}\le t<n-1+{k\over{n}},
\eneq
where we identify $w_{n+1,0}(t)=w_{n,n}(t)$ for $t\in [0,1].$
Note that $\{w(t): t\in [0,\infty)\}$ is a continuous path of unitaries in $A\otimes {\cal Z}.$ It follows from (\ref{Mext-2+3}) that
\beq\label{Mext-11}
{\tilde \af}(x)=\lim_{t\to\infty}{\rm Ad}\, w(t)\circ (\af\otimes  \sigma)(x)\tforal x\in A\otimes {\cal Z}.
\eneq
Since $w_{n,k}(0)=1,$ we conclude that $w(0)\in U_0(A).$ Therefore
${\tilde \af}$ and $\af\otimes \sigma$ are strongly asymptotically
unitarily equivalent.

\end{proof}

\section{Classification of automorphisms
with Rokhlin property}

\begin{df}\label{DKK}
Let $A$ be a unital separable amenable \CA\, which satisfies the UCT.
At the present, we also assume that $A$ has stable rank one and simple.
Let $KK_e^{-1}(A,A)^{++}$ be the subset of those elements $\kappa$
in $KK(A,A)$ which is strictly positive, i.e.,
$\kappa(K_0(A)_+\setminus \{0\})\subset K_0(A)_+\setminus \{0\},$  preserving the identity,
$\kappa([1_A])=[1_A],$ invertible, i.e., there is $\kappa^{-1}$ such that
$\kappa\times \kappa^{-1}=\kappa^{-1}\times \kappa=[{\rm id}_A].$
Let $\gamma: T(A)\to T(A)$ be an affine homeomorphism and
$\lambda: U(A)/CU(A)\to U(A)/CU(A)$ be a continuous isomorphism.
Let $\rho_A: K_0(A)\to \Aff T(A)$ be the order preserving \hm\, defined by
$\rho_A([p])(\tau)=\tau(p)$ for all projections $p\in M_{\infty}(A).$ We say $\kappa$ and
$\gamma$ are compatible if
$\gamma(\tau)(p)=\tau(\rho_A(\kappa([p])))$ for all projections $p\in M_{\infty}(A)$ and
$\tau\in T(A).$  Let  $\gamma^*: \Aff T(A)\to \Aff T(A)$ be the continuous affine isomorphism
induced by $\gamma,$ i.e.,
$$
\gamma^*(f)(\tau)=f(\gamma(\tau))\tforal f\in \Aff T(A)
$$ and for all $\tau\in T(A).$ If $\gamma$ is compatible with $\kappa,$ denote by
$$
\overline{\gamma^*}:\Aff T(A)/\overline{\rho_A(K_0(A))}\to \Aff T(A)/\overline{\rho_A(K_0(A))}
$$
the isomorphism induced by $\gamma^*.$
We say $\lambda$ is compatible with $\kappa,$ if $\lambda$ maps
$U(A)/CU(A)$ into $U(A)/CU(A)$ and
$\pi_1\circ \lambda({\bar u})=\kappa([u])$ for all unitaries $u\in U(A),$ where $\pi_1: U(A)/CU(A)\to K_1(A)$ is the quotient map.
We say the triple
$(\kappa, \gamma,\lambda)$ are compatible, if $\gamma$ and $\lambda$ are compatible with $\kappa$ and
$$
{\bar \Delta}_A\circ \lambda\circ {\bar \Delta}_A^{-1}=\overline{\gamma^*},
$$
where ${\bar \Delta}_A: U_0(A)/CU(A)\to \Aff T(A)/\overline{\rho_A(K_0(A))}$ is the de la Harp and Skandalis
determinant.

Denote by $KKUT_e^{-1}(A,A)^{++}$ the set of all
compatible triples $(\kappa, \gamma, \lambda).$

For each $\phi\in \Aut(A),$ there is a triple
$([\phi], \phi_T, \phi^{\ddag}),$ where $[\phi]\in KK_e^{-1}(A,A)^{++}$ is induced $KK$-element,
$\phi_T: T(A)\to T(A)$ is the affine isomorphism induced by $\phi$ and defined by
$\phi_T(\tau)(a)=\tau(\phi(a))$ for all selfadjoint elements $a\in A$ and for all $\tau\in T(A),$ and where
$\phi^{\ddag}: U(A)/CU(A)\to U(A)/CU(A)$ is the isomorphism
induced by $\phi$ and defined by $\phi^{\ddag}({\bar u})=\overline{\phi(u)}$ for all unitraies $u\in U(A).$
Therefore there is a map  $\mathfrak{K}: \Aut(A)\to KKUT_e^{-1}(A,A)^{++}$ defined by
$\mathfrak{K}(\phi)=([\phi], \phi_T,\phi^{\ddag}).$

\end{df}

\begin{df}\label{Drotation}
Let $\phi, \psi: A\to A$ be two unital \hm s. Let
$$
M_{\phi, \psi}=\{f\in C([0,1], A): f(0)=\phi(a),\,\,\, f(1)=\psi(a)\,\,\,{\rm for\,\,\, some}\,\,\, a\in A\}.
$$
There is a \hm\,
$$
R_{\phi, \psi}: K_1(M_{\phi, \psi})\to \Aff T(A)
$$
defined by
$$
R_{\phi, \psi}([u])(\tau)={1\over{2\pi i}}\int_0^1 \tau({d u(t)\over{dt}}u(t)^* )dt.
$$
When $[\phi]=[\psi]$ in $KK(A,A),$ there is a splitting short exact
sequence
$$
0\to \underline{K}(SA)\to^{[\iota]} \underline{K}(M_{\phi,\psi})\rightleftharpoons_{\Theta} \underline{K}(A)\to 0,
$$
where $\iota: SA=C_0((0,1), A)\to M_{\phi, \psi}$ is the embedding. In particular,
$$
0\to K_0(A)\to^{\iota_{*1}} \underline{K}(M_{\phi,\psi})\rightleftharpoons_{\theta} K_1(A)\to 0,
$$
where $\theta=\Theta|_{K_1(A)}.$ 
We will also use  $R_{\phi, \psi}$ for the induced map $R_{\phi, \psi}: K_1(A)\to
\Aff (T(A)).$
See, for example, Definition 3.2 of \cite{Lninv} for details.

Let ${\cal R}_0\subset Hom(K_1(A), \Aff T(A))$  be the subset of those $\eta$ for which there
is a \hm\, $h: K_1(A)\to K_0(A)$ such that $\eta=\rho_A\circ h.$

We now assume that $\mathfrak{K}(\phi)=\mathfrak{K}(\psi).$  Then, by Lemma 9.2 of \cite{Lninv},
$$
R_{\phi, \psi}\in Hom(K_1(A), \overline{\rho_A(K_0(A))}).
$$
Denote by  ${\overline{R_{\phi, \psi}}}$ the element in $Hom(K_1(A), \Aff T(A))/{\cal R}_0.$
If $\overline{R_{\phi, \psi}}=0,$ then there is exists $\Theta\in Hom_{\Lambda}(\underline{K}(A), \underline{K}(M_{\phi, \psi}))$ such that $[\pi_0]\circ \Theta=[{\rm id}_A]$ and
$$
R_{\phi, \psi}\circ \Theta|_{K_1(A)}=0,
$$
where $\pi_0: M_{\phi, \psi}\to A$ is the point-evaluation at $t=0$ (see 3.2 of \cite{Lninv}). One also has
\beq\label{iota}
(\pi_0)_{*1}\circ \iota=0.
\eneq
\end{df}

\begin{thm}\label{CM1}
Let $A$ be a unital separable simple amenable \CA\, with $TR(A)\le 1$ which satisfies the UCT.
Let $\af$ and $\bt$ be two automorphisms with the Rokhlin property.
Then the following are equivalent:

{\rm (1)} $\af$ and $\bt$ are strongly outer conjugate;

{\rm (2)} $\af$ and $\bt$ are asymptotically unitarily equivalent and

{\rm (3)} $\mathfrak{K}(\af)=\mathfrak{K}(\bt)$ and
      $\overline{R_{\af, \bt}}=0.$
\end{thm}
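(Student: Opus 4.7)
The plan is to prove the cyclic implications $(1) \Rightarrow (2) \Rightarrow (3) \Rightarrow (1)$.

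For $(1) \Rightarrow (2)$: Suppose $\af = \mathrm{Ad}\,u \circ \sigma \circ \bt \circ \sigma^{-1}$ with $\sigma$ strongly asymptotically inner, implemented by a continuous path $\{v(t):t\in[0,\infty)\}\subset U(A)$ with $v(0)=1_A$ and $\sigma(x) = \lim_t v(t)^* x v(t)$. I would set $w(t) := \bt(v(t))^* v(t)\, u$ and verify directly that $\af(x) = \lim_t w(t)^* \bt(x) w(t)$; the interchange of limits required here uses only continuity of $\bt$ and uniform boundedness of the paths, and $\{w(t)\}$ is automatically continuous. This gives asymptotic unitary equivalence.

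For $(2) \Rightarrow (3)$: Asymptotic unitary equivalence preserves $KK$-class, the induced trace map $\af_T$, and the induced map on $U(A)/CU(A)$, since all three invariants are trivial on inner automorphisms and stable under pointwise norm limits; hence $\mathfrak{K}(\af) = \mathfrak{K}(\bt)$. The vanishing $\overline{R_{\af,\bt}} = 0$ is part of the general theory of asymptotic unitary equivalence for monomorphisms into unital simple separable amenable $C^*$-algebras with tracial rank at most one satisfying the UCT (the relevant rotation-map theorem from \cite{Lninv}): given a continuous path $\{u(t)\}$ realizing the equivalence, the rotation map computed from it factors through $\mathcal{R}_0 \subset \Hom(K_1(A), \Aff\,T(A))$, so its image in $\Hom(K_1(A),\Aff\,T(A))/\mathcal{R}_0$ vanishes.

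For $(3) \Rightarrow (1)$: This is the direction containing the real content. By the classification of unital monomorphisms up to asymptotic unitary equivalence between unital separable simple amenable $C^*$-algebras of tracial rank at most one satisfying the UCT, the hypotheses $\mathfrak{K}(\af) = \mathfrak{K}(\bt)$ and $\overline{R_{\af,\bt}} = 0$ imply that $\af$ and $\bt$ are asymptotically unitarily equivalent. In particular, $\af \circ \bt^{-1}$ is asymptotically inner. Since both $\af$ and $\bt$ carry the Rokhlin property, I then invoke Theorem \ref{MT1}(1) to produce a unitary $u \in U(A)$ and a strongly asymptotically inner $\sigma \in \Aut(A)$ with $\af = \mathrm{Ad}\,u \circ \sigma \circ \bt \circ \sigma^{-1}$, which is precisely (1).

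The chief obstacle lies in $(3) \Rightarrow (2)$: it rests on the classification of asymptotic unitary equivalence for monomorphisms, a deep result from Lin's classification program that is cited wholesale. Once that black box is in hand, the nontrivial step is the bridge from (2) to (1); this is exactly what Theorem \ref{MT1} was engineered to provide, and it is why the controlled-length homotopy lemmas of Section 3 were needed. That bridge — trading asymptotic unitary equivalence for strong outer conjugacy under the Rokhlin hypothesis — is the novel contribution, while the outer two equivalences reduce to standard invariance checks and an established classification theorem.
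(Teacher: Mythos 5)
Your proposal is correct and follows essentially the same route as the paper: the direct computation for (1)$\Rightarrow$(2), Theorem 7.2 of \cite{Lninv} for the equivalence of (2) and (3), and Theorem \ref{MT1} to pass back to strong outer conjugacy. The only difference is cosmetic (a cyclic chain of implications and an explicit single path $w(t)=\beta(v(t))^*v(t)u$ instead of the paper's two separate asymptotic equivalences), so no further comment is needed.
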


\begin{proof}
We have proved ``(2) $\Rightarrow$ (1)" in \ref{MT1}.  That of `` (2) $\Leftrightarrow$ (3)"  follows from
Theorem 7.2 of \cite{Lninv}.

Now assume that $\af$ and $\bt$ are strongly outer conjugate. Then there is a unitary $u\in U(A)$ and
a strongly  asymptotically inner automorphism $\sigma$ such that
$\af={\rm Ad}\, u\circ \sigma^{-1}\circ \bt\circ \sigma.$  It follows that
$\sigma\circ {\rm Ad}\, u^*\circ \af=\bt \circ \sigma.$
 Since $\sigma$ is (strongly) asymptotically inner, there is a continuous path of unitaries
$\{u(t): t\in [1, \infty)\}\subset A$ such that
$$
\sigma(x)=\lim_{t\to\infty} u^*(t)xu(t)\tforal x\in A.
$$
It follows that
$$
\bt\circ \sigma(x)=\lim_{t\to\infty}\bt(u(t))^*\bt(x)\bt(u(t))\tforal x\in A.
$$
It follows that $\bt\circ \sigma$ is (strongly) asymptotically unitarily equivalent to $\bt.$ Similarly
$\sigma\circ {\rm Ad}\, u^*\circ \af$ is asymptotically unitarily equivalent to $\af$ (note
that $u^*$ may not be in $U_0(A)$). Therefore
$\af$ and $\bt$ are asymptotically unitarily equivalent. This proves that `` (1) $\Rightarrow$ (2)".

\end{proof}

The following also follows from \ref{MT1}.

\begin{thm}\label{CM2}
Let $A$ be a unital separable simple amenable \CA\, with $TR(A)\le 1$ which
satisfies the UCT and let $\af$ and $\bt$ be two automorphisms on $A$ with the Rokhlin property. Then
$\af$ and $\bt$ are strongly outer conjugate and uniformly approxiamtely conjugate if and only if
$\af$ and $\bt$ are strongly asymptotically unitarily equivalent.
\end{thm}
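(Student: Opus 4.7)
My plan is to derive both implications from results already established earlier in the paper. For the $(\Leftarrow)$ direction, suppose $\alpha$ and $\beta$ are strongly asymptotically unitarily equivalent via a path $u(t)$ with $u(0)=1$. Then $\alpha\circ\beta^{-1}(y)=\lim_t u(t)^*yu(t)$ is strongly asymptotically inner, so Theorem \ref{MT1}(2) yields sequences $\{u_n\}\subset U(A)$ with $\|u_n-1\|\to 0$ and strongly asymptotically inner automorphisms $\{\sigma_n\}$ satisfying $\alpha={\rm Ad}\,u_n\circ\sigma_n\circ\beta\circ\sigma_n^{-1}$. Replacing each $\sigma_n$ by its (also strongly asymptotically inner) inverse puts this in the exact form of Definition \ref{Duccc}.

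For the $(\Rightarrow)$ direction, assume $\alpha={\rm Ad}\,u_n\circ\sigma_n^{-1}\circ\beta\circ\sigma_n$ with $\|u_n-1\|\to 0$ and $\sigma_n$ strongly asymptotically inner. I fix a single index $n$ with $\|u_n-1\|<2$, which places $u_n\in U_0(A)$ (take a logarithm of $u_n$ avoiding $-1$ and follow the exponential path), and set $\gamma:=\sigma_n^{-1}\circ\beta\circ\sigma_n$. The plan is to show separately that $\gamma$ is strongly asymptotically unitarily equivalent (s.a.u.e.)\ to $\beta$ and that $\alpha$ is s.a.u.e.\ to $\gamma$, and then to invoke transitivity: if $W(t)$ (with $W(0)=1$) witnesses $\gamma\sim\beta$ and $v(t)$ (with $v(0)=1$) witnesses $\alpha\sim\gamma$, then the product $R(t):=W(t)v(t)$ satisfies $R(0)=1$ and the triangle estimate $\|R(t)^*\beta(x)R(t)-\alpha(x)\|\le\|W(t)^*\beta(x)W(t)-\gamma(x)\|+\|v(t)^*\gamma(x)v(t)-\alpha(x)\|\to 0$ gives $\alpha$ s.a.u.e.\ $\beta$.

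For $\gamma$ s.a.u.e.\ $\beta$: let $\{w(t)\}$ implement $\sigma_n$ with $w(0)=1$ and set $W(t):=\beta(w(t))w(t)^*$. Then $W(0)=1$ and a direct computation gives $W(t)^*\beta(x)W(t)=w(t)\,\beta(w(t)^*xw(t))\,w(t)^*$; as $t\to\infty$, norm continuity of $\beta$ gives $\beta(w(t)^*xw(t))\to\beta(\sigma_n(x))$, while conjugation by $w(t)$ pointwise implements $\sigma_n^{-1}$, and a three-$\varepsilon$ estimate yields $W(t)^*\beta(x)W(t)\to\sigma_n^{-1}(\beta(\sigma_n(x)))=\gamma(x)$. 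For $\alpha$ s.a.u.e.\ $\gamma$: from $\alpha={\rm Ad}\,u_n\circ\gamma$ we get $\alpha(x)=u_n^*\gamma(x)u_n$, and because $u_n\in U_0(A)$ one can pick a continuous path $v(t)\subset U_0(A)$ with $v(0)=1$ and $v(t)=u_n$ for all $t\ge 1$, so $v(t)^*\gamma(x)v(t)=\alpha(x)$ exactly for $t\ge 1$. The only delicate point in the entire proof is the three-$\varepsilon$ verification in the first step; everything else is a direct invocation of \ref{MT1}(2) or routine bookkeeping.
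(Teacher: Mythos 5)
Your proof is correct and takes essentially the paper's route: the $(\Leftarrow)$ direction is precisely Theorem \ref{MT1}(2) (after replacing each $\sigma_n$ by its inverse), and your $(\Rightarrow)$ argument is the same conjugation-by-$\sigma_n$ path manipulation that the paper uses for (1)$\Rightarrow$(2) in Theorem \ref{CM1}, upgraded to the strong version via the observations that $\|u_n-1\|<2$ forces $u_n\in U_0(A)$ and that the witnessing paths can be arranged to pass through $1_A$. The only cosmetic point is the normalization $u(1)=1_A$ in Definition \ref{dasym} versus your $u(0)=1$, which a trivial reparametrization fixes.
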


\begin{NN}\label{mult}
{\rm
Let $\langle \af\rangle , \langle \bt\rangle \in \Aut_R(A)/\sim_{scc}$  be two elements represented by
automorphisms $\af$ and $\bt$ with the Rokhlin property. One can define a multiplication
by $\langle \af\rangle\circ \langle \bt \rangle =\langle {\widetilde{\af\circ \bt}} \rangle,$ where
${\widetilde{ \af\circ \bt}}$ is an automorphism with the Rokhlin property which is strongly asymptotically
unitarily equivalent to $\af\circ \bt$ given by \ref{Mext}.
%By \ref{MT1},it is well-defined.
This makes $\Aut_R(A)/\sim_{scc}$ a group with the identity
represented by an asymptotically inner automorphism which has the Rokhlin property.

Similarly, $\Aut_R(A)/\sim_{saucc}$ is also a group with the identity
represented by a strongly asymptotically inner automorphism which has the Roklin property.

If $(\kappa_1, \gamma_1, \lambda_1),\, (\kappa_2,\gamma_2, \lambda_2)\in KKUT_e^{-1}(A,A)^{++},$ define
$$
(\kappa_1, \gamma_1, \lambda_1)\times  (\kappa_2,\gamma_2, \lambda_2)
=(\kappa_2\times \kappa_1, \gamma_2\circ \gamma_1, \lambda_1\circ \lambda_2).
$$
This makes  $KKUT_e^{-1}(A,A)^{++}$  a group. Let $\af,\, \bt\in \Aut(A).$
Then
$$
\mathfrak{K}(\af\circ \bt)=([\af\circ \bt], (\af\circ \bt)_T, (\af\circ \bt)^{\ddag})=([\bt]\times [\af], \bt_T\circ \af_T, \af^{\ddag}\circ \bt^{\ddag})=\mathfrak{K}(\af)\times \mathfrak{K}(\bt).
$$
Thus, by \ref{CM1},  $\mathfrak{K}$ gives a group \hm\, from
$\Aut_R(A)/\hspace{-0.05in}\sim_{scc}$ into $KKUT_e^{-1}(A,A)^{++}.$

}
\end{NN}

\begin{thm}\label{CMT2}
Let $A$ be a unital separable simple amenable \CA\, with $TR(A)\le 1$ which satisfies the UCT.
Then one has the following short exact sequence of groups :
\beq\label{CMT2-1}
1\to {\rm Hom}(K_1(A), \overline{\rho_A(K_0(A))})/{\cal R}_0\to \Aut_R(A)/\sim_{scc}
 {\stackrel{\mathfrak{K}}{\rightarrow}} KKUT_e^{-1}(A,A)^{++}\to 1.
\eneq

\end{thm}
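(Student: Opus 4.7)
The plan is to (i) establish surjectivity of $\mathfrak{K}$, (ii) construct a homomorphism
$$\Phi: \ker(\mathfrak{K}) \longrightarrow \Hom(K_1(A), \overline{\rho_A(K_0(A))})/{\cal R}_0,$$
and (iii) show $\Phi$ is an isomorphism, the image being the kernel as in the sequence.

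For (i), given any compatible triple $(\kappa, \gamma, \lambda) \in KKUT_e^{-1}(A,A)^{++}$, the existence side of the classification of automorphisms for unital simple $AH$-algebras with no dimension growth (available since $A$ is such an algebra by \cite{Lntr=1}) produces $\phi \in \Aut(A)$ with $\mathfrak{K}(\phi) = (\kappa, \gamma, \lambda)$. Theorem \ref{Mext} then yields $\af \in \Aut_R(A)$ strongly asymptotically unitarily equivalent to $\phi$, and since $\mathfrak{K}$ is invariant under asymptotic unitary equivalence, $\mathfrak{K}(\af) = (\kappa, \gamma, \lambda)$.

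For (ii), fix a representative $\af_0 \in \Aut_R(A)$ of the identity of $\Aut_R(A)/\!\sim_{scc}$ (asymptotically inner with the Rokhlin property, as in \ref{mult}). For $\af \in \Aut_R(A)$ with $\mathfrak{K}(\af) = \mathfrak{K}(\af_0)$, Lemma 9.2 of \cite{Lninv} gives $R_{\af, \af_0} \in \Hom(K_1(A), \overline{\rho_A(K_0(A))})$, and I would set $\Phi(\langle \af\rangle) = \overline{R_{\af, \af_0}}$. A direct mapping-torus computation shows $\Phi$ is well-defined on strong outer conjugacy classes: if $\af = {\rm Ad}\, u \circ \sigma^{-1} \circ \bt \circ \sigma$ with $\sigma$ strongly asymptotically inner, tracking unitary paths shows $R_{\af, \af_0} - R_{\bt, \af_0}$ lies in ${\cal R}_0$. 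Additivity of rotation numbers under composition (modulo ${\cal R}_0$) makes $\Phi$ a group homomorphism. Injectivity of $\Phi$ is immediate from Theorem \ref{CM1}: $\overline{R_{\af, \af_0}} = 0$ together with $\mathfrak{K}(\af) = \mathfrak{K}(\af_0)$ imply $\af$ and $\af_0$ are strongly outer conjugate.

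The main obstacle is surjectivity of $\Phi$. Given $\overline{\eta} \in \Hom(K_1(A), \overline{\rho_A(K_0(A))})/{\cal R}_0$, I must construct $\af \in \Aut_R(A)$ with $\mathfrak{K}(\af)$ trivial and $\overline{R_{\af, \af_0}} = \overline{\eta}$. The strategy is first to realize $\eta$ by an automorphism $\phi \in \Aut(A)$ with $\mathfrak{K}(\phi) = \mathfrak{K}(\af_0)$, via a construction in the mapping torus $M_{{\rm id}, {\rm id}}$ built from a unitary with prescribed de la Harpe--Skandalis determinant (exploiting the surjectivity of $\overline{\Delta}$ from Thomsen's theorem together with the splitting (\ref{dcu-1})); then apply Theorem \ref{Mext} to obtain $\af \in \Aut_R(A)$ strongly asymptotically unitarily equivalent to $\phi$. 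The rotation invariant is preserved in this last step because the path of unitaries implementing strong asymptotic unitary equivalence starts at $1$, so its contribution to $K_1$ of the mapping torus factors through $\iota_{*1}$ (see (\ref{iota})) and is therefore killed by $R$ up to ${\cal R}_0$. Combining (i)--(iii) yields the claimed short exact sequence, with exactness at the middle term amounting to the equivalence $\af \sim_{scc} \af_0 \Longleftrightarrow \mathfrak{K}(\af) = \mathfrak{K}(\af_0)$ and $\overline{R_{\af, \af_0}} = 0$, which is Theorem \ref{CM1}.
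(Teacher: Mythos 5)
Your skeleton is essentially the paper's: the paper proves this theorem by combining Theorem \ref{Mext} (every automorphism is strongly asymptotically unitarily equivalent to one in $\Aut_R(A)$), Theorem \ref{CM1} (strong outer conjugacy $=$ asymptotic unitary equivalence $=$ equality of $\mathfrak{K}$ plus vanishing of $\overline{R}$), and Corollary 9.10 of \cite{Lninv}, which supplies the corresponding exact sequence at the level of asymptotic unitary equivalence classes. Your steps (i), (ii) and the injectivity of $\Phi$ are fine, granted the standard facts from \cite{Lninv} that $\overline{R}$ satisfies the cocycle identity $\overline{R_{\af,\gamma}}=\overline{R_{\af,\bt}}+\overline{R_{\bt,\gamma}}$, is unchanged under asymptotic unitary equivalence, and is additive under composition on $\ker\mathfrak{K}$.

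The genuine gap is your surjectivity argument for $\Phi$. Producing, for an arbitrary $\overline{\eta}\in {\rm Hom}(K_1(A),\overline{\rho_A(K_0(A))})/{\cal R}_0$, an automorphism $\phi$ with $\mathfrak{K}(\phi)=\mathfrak{K}({\rm id}_A)$ and $\overline{R_{\phi,{\rm id}}}=\overline{\eta}$ is a nontrivial existence theorem, and the sketch you give does not do it. A unitary in $M_{{\rm id},{\rm id}}\cong C(\T,A)$ with prescribed de la Harpe--Skandalis determinant is not an automorphism of $A$, and the rotation map $R_{\phi,\psi}$ only makes sense after the pair $(\phi,\psi)$ has been constructed, so one cannot "prescribe" it inside a fixed mapping torus. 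Nor can a single unitary help via conjugation: ${\rm Ad}\,u$ is strongly asymptotically unitarily equivalent to ${\rm id}_A$ through the constant path, hence $\mathfrak{K}({\rm Ad}\,u)=\mathfrak{K}({\rm id}_A)$ and $\overline{R_{{\rm Ad}\,u,\,{\rm id}}}=0$ no matter what $\Delta(u)$ is; surjectivity of $\overline{\Delta}$ and the splitting (\ref{dcu-1}) therefore produce nothing by themselves. Nontrivial rotation classes only arise as pointwise limits of inner automorphisms in which the determinants of the successive increments are accumulated against every element of $K_1(A)$, with commutator estimates controlled on larger and larger finite sets; carrying this out simultaneously for a possibly infinitely generated $K_1(A)$ is exactly the approximate-intertwining existence machinery of Sections 8--9 of \cite{Lninv}. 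This is the point where the paper (both here and in the proof of \ref{DUext}) cites Corollary 9.10 together with 9.8 of \cite{Lninv}; if you replace your mapping-torus sketch by that citation (or reproduce its intertwining construction), the rest of your argument closes and agrees with the paper's proof.
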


\begin{proof}
Theorem \ref{Mext} shows that every automorphism $\af\in \Aut(A)$ is strongly asymptotically unitarily equivalent
to an automorphism in $\Aut_R(A).$ Thus the theorem follows from this, Theorem \ref{CM1} and
Corollary 9.10 of \cite{Lninv}.
\end{proof}

\begin{df}\label{DG0}
Let $G_0$ be the subset
of all those automorphisms which are                                                outer conjugate to some  asymptotically inner automorphisms which have the Rokhlin property.
Let ${\bar G}_0$ be the image of $G_0$ in $\Aut_R(A)/\sim_{scc}.$
Suppose that $\af,\bt$ are two automorphisms with the Rokhlin property whose image in
$\Aut_R(A)/\sim_{scc}$ are in ${\bar G_0}.$
Let ${\rm Ad}\, u\circ \sigma_1^{-1}\circ \af\circ \sigma_1=\dt_1$  and ${\rm Ad}\, v\circ \sigma_2^{-1}\circ \af\circ \sigma_2=\dt_2,$
 where $\dt_1, \dt_2$ are  asymptotically inner
automorphisms with the Rokhlin property, $u, v\in U(A),$ $\sigma_1$ and $\sigma_2$ are  asymptotically inner automorphisms.
Consider $\af\circ \bt.$
Then
\beq\label{mult-1}
&&{\rm Ad}\, v\circ \sigma_2^{-1}\circ (\af\circ \bt)\circ \sigma_2\\
&=&{\rm Ad}\,  v\circ \sigma_2^{-1}\circ \af\circ \sigma_2\circ {\rm Ad}\, v^*\circ {\rm Ad}\, v\circ
\sigma_2^{-1}\circ \bt\circ \sigma_2\\
&=&{\rm Ad}\,  v\circ \sigma_2^{-1}\circ \af\circ \sigma_2\circ {\rm Ad}\, v^*\circ \dt_2
\eneq
which is strongly asymptotically unitarily equivalent to ${\rm Ad}\,  v\circ \sigma_2^{-1}\circ \af\circ \sigma_2\circ {\rm Ad}\, w$ for some $w\in U(A).$ Therefore
\beq\label{mult-2}
\langle \af\circ \bt\rangle=\langle {\rm Ad}\,  v\circ \sigma_2^{-1}\circ \af\circ \sigma_2\circ {\rm Ad}\, w\rangle.
\eneq
However,
\beq\label{mult-2+}
{\rm Ad}\,  v\circ \sigma_2^{-1}\circ \af\circ \sigma_2\circ {\rm Ad}\, w={\rm Ad}\,( vw)\circ {\rm Ad}\, w^*\circ \sigma_2^{-1}\circ \af\circ
\sigma_2\circ {\rm Ad}\, w
\eneq
 is outer
conjugate to $\dt_1.$ It follows that
$\langle \af\circ \bt \rangle\in \overline{ G_0}.$  This shows that $\overline{G_0}$ is a subgroup.
Clearly it is  a normal subgroup. It is also clear that
$(\Aut_R(A)/\sim_{scc})/\overline{G_0}=\Aut_R(A)/\sim_{cc}.$ This also implies that $\Aut_R(A)/\sim_{cc}$ is a group
where the identity is the class of asymptotically inner automorphisms with the Rokhlin property.

\end{df}

\begin{thm}\label{CMT3}
Let $A$ be a unital separable simple amenable \CA\, with $TR(A)\le 1$ which satisfies the UCT.
Then there is a short exact sequence of groups:
\beq\label{CMT3-1}
1\to K_1(A)/H_1(K_0(A), K_1(A))\to \Aut_R(A)/\hspace{-0.05in}\sim_{saucc}\,\,\to \Aut_R(A)/\hspace{-0.05in}\sim_{scc}\,\to 1.
\eneq

\end{thm}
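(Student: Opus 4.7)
My plan is to verify the three basic requirements for the asserted short exact sequence: that the right-hand map is a well-defined surjective group homomorphism, then to identify its kernel via an isomorphism with $K_1(A)/H_1(K_0(A), K_1(A))$. For the group structure on $\Aut_R(A)/\sim_{saucc}$, I would mimic the construction of \ref{mult} and \ref{DG0}: given $\langle \af\rangle$ and $\langle \bt\rangle$ in $\Aut_R(A)/\sim_{saucc}$, set $\langle \af\rangle\circ\langle\bt\rangle=\langle\widetilde{\af\circ\bt}\rangle$, with $\widetilde{\af\circ\bt}$ furnished by \ref{Mext}; the identity element is the class of a strongly asymptotically inner automorphism in $\Aut_R(A)$. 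The map $\pi\colon\Aut_R(A)/\sim_{saucc}\to\Aut_R(A)/\sim_{scc}$ sending $\langle\af\rangle_{saucc}$ to $\langle\af\rangle_{scc}$ is well-defined and surjective essentially by inspection (any single witnessing pair $(u_n,\sigma_n)$ for $\sim_{saucc}$ also witnesses $\sim_{scc}$, and both sides use the same representatives).

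The kernel of $\pi$ admits a clean description: by \ref{CM1}, an element $\langle\af\rangle_{saucc}$ lies in $\ker\pi$ if and only if $\af$ is asymptotically unitarily equivalent to some strongly asymptotically inner automorphism with the Rokhlin property, equivalently (after composing with the inverse of a representative of the identity), if and only if $\af$ itself is asymptotically inner. By \ref{CM2}, two such $\af,\bt\in\Aut_R(A)$ are $\sim_{saucc}$-equivalent if and only if they are strongly asymptotically unitarily equivalent. Thus $\ker\pi$ is canonically identified with the set of strong-asymptotic-unitary-equivalence classes of asymptotically inner automorphisms in $\Aut_R(A)$.

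To build the isomorphism $K_1(A)/H_1(K_0(A),K_1(A))\cong\ker\pi$, I would proceed as follows. Fix $\af_0\in\Aut_R(A)$ strongly asymptotically inner (it exists by applying \ref{Mext} to $\operatorname{id}_A$). For a unitary $u\in U(A)$, set $\beta_u:=\widetilde{\Ad u\circ\af_0}$, the Rokhlin-promoted automorphism provided by \ref{Mext}, and define $\Psi([u]):=\langle\beta_u\rangle_{saucc}$. If $u'=u_0u$ with $u_0\in U_0(A)$, then $\Ad u_0$ is itself strongly asymptotically inner (any piecewise-smooth path from $1$ to $u_0$ produces such an implementation), so $\Ad u'\circ\af_0$ and $\Ad u\circ\af_0$ differ by pre/post-composition with a strongly asymptotically inner automorphism, and after Rokhlin-promotion one deduces $\beta_{u'}\sim_{saucc}\beta_u$; thus $\Psi$ factors through $K_1(A)=U(A)/U_0(A)$. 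A direct computation using (\ref{MT1-14+2}) and the splitting from \ref{dcu} shows $\Psi$ is a homomorphism. To see that $\Psi$ vanishes on $H_1(K_0(A),K_1(A))$: if $[u]=\psi([1_A])$ for some $\psi\in\Hom(K_0(A),K_1(A))$, then $\psi$ extends to a homomorphism $\Gamma\colon K_0(A)\to U_0(A)/CU(A)$ (via composition with the splitting $s_1$), and the existence part of \ref{Ext1}, combined with the strong homotopy lemma \ref{MHLT}, allows one to absorb $\Ad u$ into a strongly asymptotically inner automorphism, giving $\beta_u\sim_{saucc}\af_0$.

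The main obstacle will be the injectivity of the induced map $\bar\Psi\colon K_1(A)/H_1(K_0(A),K_1(A))\to\ker\pi$ together with its surjectivity. For surjectivity, given an asymptotically inner $\af\in\Aut_R(A)$ with implementing path $\{u(t)\}$, I would set $x:=[u(0)]\in K_1(A)$ and show $\Psi(x)=\langle\af\rangle_{saucc}$; this is natural but requires proving that the $K_1$-class of $u(0)$ modulo $H_1(K_0(A),K_1(A))$ is independent of the chosen path and of $\af$ up to strong asymptotic unitary equivalence. The hard part will be the injectivity: supposing $\beta_u\sim_{saucc}\af_0$, extract from the strong asymptotic unitary equivalence path a specific element of $\Hom(K_0(A),K_1(A))$ sending $[1_A]$ to $[u]$. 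My approach is to exploit the rotation invariant $R_{\af_0,\beta_u}\colon K_1(A)\to\Aff T(A)$ of \ref{Drotation}: its class in $\Hom(K_1(A),\Aff T(A))/\mathcal{R}_0$ must vanish by \ref{CM1}, and the lift of this vanishing to $\Hom(K_1(A),K_0(A))$, coupled with the splitting in \ref{dcu}, produces the desired $\psi\in\Hom(K_0(A),K_1(A))$ witnessing $[u]\in H_1(K_0(A),K_1(A))$. Carrying out this dualization carefully — in particular, ensuring the $K_1$-class is invariant under the ambiguity in the choice of implementing path and asymptotically commuting unitary sequences — is where the delicate bookkeeping lies.
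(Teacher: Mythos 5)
Your overall architecture is the same as the paper's, only run in the opposite direction: the paper defines a map $\mathfrak{U}$ from $\ker\pi$ to $K_1(A)/H_1(K_0(A),K_1(A))$ by sending an asymptotically inner $\bt\in\Aut_R(A)$ with implementing path $\{v(t)\}$ to the class of $[v(1)^*]$, while you define the inverse map $\Psi$ on unitary classes via Rokhlin promotion of ${\rm Ad}\,u\circ\af_0$. In either direction the entire mathematical content is the single equivalence: for $\gamma\in\Aut(A)$ and $w\in U(A)$, ${\rm Ad}\,w\circ\gamma$ is strongly asymptotically unitarily equivalent to $\gamma$ if and only if $[w]\in H_1(K_0(A),K_1(A))$. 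The paper obtains the ``only if'' direction from Proposition 12.3 of \cite{Lnamj} and the ``if'' direction from (the proofs of) Lemmas 10.4 and 10.5 of \cite{Lninv}, and everything else is bookkeeping with \ref{CM1}, \ref{CM2}, \ref{MT1} and \ref{Mext}, exactly as in your first two paragraphs.

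The genuine gap is in your injectivity argument, which is where you try to replace Proposition 12.3 of \cite{Lnamj}. The rotation invariant cannot detect the relevant obstruction: since $\af_0$ is (strongly) asymptotically inner, $\beta_u=\widetilde{{\rm Ad}\,u\circ\af_0}$ is asymptotically inner for \emph{every} $u\in U(A)$ (compose the implementing path of $\af_0$ with $u$), so $\beta_u$ and $\af_0$ are always asymptotically unitarily equivalent; hence $\mathfrak{K}(\beta_u)=\mathfrak{K}(\af_0)$ and $\overline{R_{\af_0,\beta_u}}=0$ hold unconditionally and carry no information about whether $[u]\in H_1(K_0(A),K_1(A))$. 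Moreover, the vanishing of $\overline{R_{\af_0,\beta_u}}$ only produces an element $h\in{\rm Hom}(K_1(A),K_0(A))$ with $R_{\af_0,\beta_u}\circ\theta=\rho_A\circ h$; that is a homomorphism out of $K_1(A)$, whereas you need a homomorphism $\psi\in{\rm Hom}(K_0(A),K_1(A))$ with $\psi([1_A])=[u]$, and no ``dualization'' via the splitting in \ref{dcu} converts one into the other. The obstruction separating strong from ordinary asymptotic unitary equivalence is a $K_1$-valued Bott/exponential-type invariant extracted from the asymptotically central unitaries $u(t)\,u\,w(t)^*$ (pairing them with projections to build $\psi$ on $K_0(A)$), which is precisely the content of Proposition 12.3 of \cite{Lnamj}. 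The same fact is also what makes your $\Psi$ well defined on $K_1$-classes and gives the path-independence needed in your surjectivity step, and your sketch of the complementary direction (absorbing ${\rm Ad}\,u$ when $[u]\in H_1$ via \ref{Ext1} and \ref{MHLT}) only gestures at what Lemmas 10.4 and 10.5 of \cite{Lninv} actually prove; so as written the argument is incomplete at its central point, and the injectivity route you propose would fail even with more effort.
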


\begin{proof}
Denote $\pi: \Aut_R(A)/\hspace{-0.05in}\sim_{saucc}\to \Aut_R(A)/\hspace{-0.05in}\sim_{scc}$
the quotient map.
Let $\af$ be a strongly asymptotically inner automorphism on $A$ with the Rokhlin property.
There exists a continuous path of unitaries $\{u(t): t\in [1, \infty)\}\subset A$ with
$u(1)=1_A$ such that
\beq\label{CMT3-1+}
\af(a)=\lim_{t\to\infty}{\rm Ad}\, u(t)(a)=\lim_{t\to\infty}u(t)^*au(t)\tforal a\in A.
\eneq
Let $\bt$ be any asymptotically inner automorphism with the Rokhlin property.
There exists a continuous path of unitaries $\{v(t): t\in [1, \infty)\}\subset A$ such that
\beq\label{CMT3-2}
\bt(a)=\lim_{t\to\infty}{\rm Ad}\, v(t)(a)\tforal a\in A.
\eneq
Let $Q: K_1(A)\to K_1(A)/H_1(K_0(A), K_1(A))$ be the quotient map.
Define ${\mathfrak{ U}}(\bt)= Q([v(1)^*])$ in $K_1(A)/H_1(K_0(A), K_1(A)).$
Define $w(t)=v(t)^*u(t)\tforal t\in [1, \infty).$  Note that $w(1)=v(1)^*.$ Then
\beq\label{CMT3-3}
\af\circ \bt^{-1}(a)=\lim_{t\to\infty} w(t)^*aw(t)\tforal a\in A.
\eneq
By the proof of \ref{MT1}, there exists $v\in U(A)$ with
$[v]=[v(1)^*]$ in $U(A)/U_0(A)$  and $\sigma\in \Aut(A)$  which
is  strongly asymptotically inner such that
$\af={\rm Ad}\, v\circ \sigma^{-1}\circ \bt \circ \sigma.$
Suppose that there exists another continuous path of unitaries $\{z(t): t\in [1,\infty)\}\subset A$ such that
\beq\label{CMT3-4}
\bt(a)=\lim_{t\to\infty}{\rm Ad}\, z(t)(a)\tforal a\in A.
\eneq
%There exists a unitary $v'\in A$ and a strongly asymptotically inner automorphism $\sigma'$ such that
%$\af={\rm Ad}\, v'\circ (\sigma')^{-1}\circ \bt \circ \sigma'$ with $[v']=[z(1)^*]$ in $U(A)/U_0(A)=K_1(A).$
 Then ${\rm Ad}\, z(1)^*\circ \bt$ is strongly asymptotically unitarily equivalent
to ${\rm id}_A.$ By (\ref{CMT3-2}), ${\rm Ad}\, v(1)^* \circ \bt$ is also strongly asymptotically
unitarily equivalent to ${\rm id}_A.$ It follows that ${\rm Ad}\, v(1)z(1)^*\circ \bt$ is strongly asymptotically
unitarily equivalent to $\bt.$ It follows from Proposition 12.3 of \cite{Lnamj} that
$[v(1)z(1)^*]\in H_1(K_0(A), K_1(A)).$ In other words, $Q([v(1)^*])=Q([z(1)^*]).$
Thus ${\mathfrak{U}}(\bt)$ is well defined \hm\, from ${\rm ker}\pi$ into $K_1(A)/H_1(K_0(A), K_1(A)).$

If ${\mathfrak{U}}(\bt)=Q([1_A])=0,$  then, by the proof of Lemma 10.4 and 10.5 of \cite{Lninv},
${\rm Ad}\, v(1)^*\circ \bt$ is strongly asymptotically unitarily equivalent to $\bt.$
But, from the above, ${\rm Ad}\, v(1)^*\circ \bt$ is strongly asymptotically inner. It follows
that $\bt$ is strongly asymptotically inner.  Therefore $\bt$ and $\af$ are strongly outer conjugate
and uniformly approximately conjugate.

So far, we have shown that ${\mathfrak{U}}$ is injective.
Choose any unitary $w\in U(A),$  by \ref{Mext}, there exists an automorphism $\gamma$ with
the Rokhlin property which is strongly asymptotically unitarily equivalent to
${\rm Ad}\, w\circ \af.$ Therefore $\gamma$ is asymptotically inner.
Then ${\mathfrak{U}}(\gamma)=Q([w^*]).$ This implies that ${\mathfrak{U}}$ is an isomorphism.

\end{proof}

\begin{cor}\label{CC1}
Let $A$ be a unital separable simple amenable
\CA\, with $TR(A)\le 1$ which satisfies the UCT. Suppose
that $K_1(A)=H_1(K_0(A), K_1(A)).$ Then two automorphisms
$\af,\, \bt\in \Aut_R(A)$ are strongly outer conjugate and
uniformly approximately conjugate if and only if
$$
\mathfrak{K}(\af)=\mathfrak{K}(\bt)\andeqn
\overline{R}_{\af, \bt}=0.
$$
Moreover,
\beq\label{CC1-1}
\Aut_R(A)/\hspace{-0.05in}\sim_{scc}=\Aut_R(A)/\hspace{-0.05in}\sim_{saucc}
\eneq
and there is a short exact sequence:
\beq\label{CC1-2}
1\to {\rm Hom}(K_1(A),\overline{\rho_A(K_0(A))})/{\cal R}_0
\to \Aut_R(A)/\hspace{-0.05in}\sim_{saucc} \to KKUT_e^{-1}(A,A)^{++}\to 1.
\eneq

\end{cor}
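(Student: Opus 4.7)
\medskip

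\noindent\textbf{Proof plan for Corollary \ref{CC1}.}

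The plan is to deduce everything directly from the three preceding results: Theorem \ref{CM1} (which characterizes strong outer conjugacy by asymptotic unitary equivalence and, in turn, by the $K$-theoretic data $\mathfrak{K}$ and $\overline{R}$), Theorem \ref{CMT2} (the short exact sequence for $\Aut_R(A)/\sim_{scc}$), and Theorem \ref{CMT3} (the short exact sequence relating $\sim_{scc}$ to $\sim_{saucc}$ via $K_1(A)/H_1(K_0(A),K_1(A))$).

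First, I would invoke Theorem \ref{CMT3}. The hypothesis $K_1(A)=H_1(K_0(A),K_1(A))$ makes the kernel
$$
K_1(A)/H_1(K_0(A),K_1(A))=0,
$$
and since the sequence in \ref{CMT3} is exact, the quotient map
$$
\Aut_R(A)/\hspace{-0.05in}\sim_{saucc}\;\longrightarrow\;\Aut_R(A)/\hspace{-0.05in}\sim_{scc}
$$
is an isomorphism of groups. This gives the identification (\ref{CC1-1}). Intuitively: the obstruction to upgrading a strong outer conjugacy $\af={\rm Ad}\,u\circ\sigma^{-1}\circ\bt\circ\sigma$ to one with $u$ arbitrarily close to $1$ lies in $K_1(A)/H_1(K_0(A),K_1(A))$, and under our hypothesis this obstruction vanishes.

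Next, substituting (\ref{CC1-1}) into the short exact sequence of Theorem \ref{CMT2} immediately yields (\ref{CC1-2}):
$$
1\to {\rm Hom}(K_1(A),\overline{\rho_A(K_0(A))})/{\cal R}_0\to \Aut_R(A)/\hspace{-0.05in}\sim_{saucc}\to KKUT_e^{-1}(A,A)^{++}\to 1.
$$

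Finally, for the equivalence characterizing when two automorphisms are strongly outer conjugate and uniformly approximately conjugate, I would argue as follows. By (\ref{CC1-1}), two automorphisms $\af,\bt\in\Aut_R(A)$ are strongly outer conjugate and uniformly approximately conjugate if and only if they are strongly outer conjugate. By Theorem \ref{CM1}, the latter is equivalent to $\mathfrak{K}(\af)=\mathfrak{K}(\bt)$ together with $\overline{R}_{\af,\bt}=0$, which is the desired $K$-theoretic criterion. There are essentially no obstacles here: the corollary is a direct combination of the three theorems, and the only nontrivial input is recognizing that the hypothesis $K_1(A)=H_1(K_0(A),K_1(A))$ kills the obstruction group in Theorem \ref{CMT3}.
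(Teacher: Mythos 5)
Your proposal is correct and is exactly the intended derivation: since the paper gives no separate argument for Corollary \ref{CC1}, it is meant to follow by combining Theorem \ref{CMT3} (whose kernel $K_1(A)/H_1(K_0(A),K_1(A))$ vanishes under the hypothesis, making the natural quotient map an isomorphism and hence identifying the two conjugacy relations), Theorem \ref{CMT2} for the exact sequence, and Theorem \ref{CM1} for the $K$-theoretic characterization. Your write-up fills in these steps correctly, including the key point that injectivity of the natural map forces the $\sim_{scc}$ and $\sim_{saucc}$ classes to coincide.
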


\begin{df}\label{Dtorsion}
{\rm
Let $A$ be a unital \CA\, with $T(A)\not=\emptyset.$
Put
$$
T\rho_A(K_0(A))=\{x\in \overline{\rho_A(K_0(A))}: kx\in \rho_A(K_0(A))
\,\,\,{\rm for\,\,\,some}\,k\in \N\setminus\{0\}\}
$$
and put
$$
TDU(A)=\{ x\in CU(A): kx\in DU(A)
\,\,\,{\rm for\,\,\,some}\,k\in \N\setminus\{0\}\}.
$$

}
\end{df}

\begin{lem}\label{cudiv}
Let $A$ be a unital infinite dimensional separable simple \CA\,
with $TR(A)\le 1.$
Then $\overline{\rho_A(K_0(A))}$ is divisible,
$T\rho_A(K_0(A))$ is divisible and the short exact sequence
\beq\label{cudiv-1}
0\to T\rho_A(K_0(A))\to \overline{\rho_A(K_0(A))}\to \overline{\rho_A(K_0(A))}/T\rho_A(K_0(A)\to 0
\eneq
splitting.
Moreover $\Aff(T(A))/T\rho_A(K_0(A))$ is torsion free.
Furthermore,
$TDU(A)$ is divisible and
\beq\label{cudiv-2}
0\to TDU(A)\to CU(A)\to CU(A)/TDU(A)\to 0
\eneq
is splitting.

\end{lem}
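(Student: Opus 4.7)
The plan is to establish divisibility of $\overline{\rho_A(K_0(A))}$ first, deduce items (2)--(4) formally from it, and then obtain the multiplicative statements (5)--(6) via the de la Harpe-Skandalis isomorphism $\Delta$ together with an exponential representation for unitaries in $CU(A)$.

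For the first claim, the essential tool is the tracial approximate divisibility of $A$ (\ref{dappdiv}, which holds by Theorem 5.4 of \cite{Lntr=1}). Given $x = \rho_A([p])$ for a projection $p \in M_\infty(A)$ and $n \geq 1$, I would, for each $\epsilon > 0$, apply tracial approximate divisibility with integer $N = n$ to obtain a projection $P \in A$ and a finite-dimensional subalgebra $C \cong \bigoplus_j M_{r(j)}$ with $r(j) \geq n$, $1_C = P$, $\tau(1-P) < \epsilon$ for all $\tau$, and $P$ almost commuting with $p$. The matrix structure inside $C$ furnishes a projection $q \in C$ whose rank is approximately $1/n$ of the rank of $PpP$ in each summand, so that $|n\tau(q) - \tau(p)| < 2\epsilon$ uniformly in $\tau \in T(A)$; letting $\epsilon \to 0$ places $x/n$ in $\overline{\rho_A(K_0(A))}$.

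Claims (2)--(4) then follow formally from (1). For (2), given $x \in T\rho_A(K_0(A))$ with $kx \in \rho_A(K_0(A))$ and $n \geq 1$, divisibility from (1) yields $y \in \overline{\rho_A(K_0(A))}$ with $ny = x$, and $(kn)y = kx \in \rho_A(K_0(A))$ puts $y$ in $T\rho_A(K_0(A))$. Claim (3) is immediate since divisible abelian groups are injective $\mathbb{Z}$-modules, so any short exact sequence with $T\rho_A(K_0(A))$ on the left splits. For (4), if $nx \in T\rho_A(K_0(A))$ for $x \in \Aff(T(A))$, then $knx \in \rho_A(K_0(A))$ for some $k$, and (1) produces $y \in \overline{\rho_A(K_0(A))}$ with $(kn)y = knx$; torsion-freeness of $\Aff(T(A))$ forces $y = x \in \overline{\rho_A(K_0(A))}$, whence $x \in T\rho_A(K_0(A))$.

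For (5) and (6), the crucial structural input is that every $u \in CU(A)$ admits an exponential representation $u = \exp(ih)$ with $h = h^* \in A$ and $\hat{h} \in \overline{\rho_A(K_0(A))}$; this is a consequence of the finite exponential length estimates for $CU(A)$ in \cite{Lnexp2} combined with the de la Harpe-Skandalis theory of \ref{dcu}. Given this, for $u = \exp(ih) \in CU(A)$ and $n \geq 1$, divisibility from (1) yields $\hat{h}/n \in \overline{\rho_A(K_0(A))}$, so $v := \exp(ih/n)$ lies in $CU(A)$ and satisfies $v^n = u$; if additionally $u^k \in DU(A)$, then $v^{nk} = u^k \in DU(A)$ places $v$ in $TDU(A)$, proving (5). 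For the splitting in (6), $\Delta$ identifies $CU(A)/TDU(A)$ with $\overline{\rho_A(K_0(A))}/T\rho_A(K_0(A))$, a torsion-free divisible abelian group by (2) and (4), i.e., a $\mathbb{Q}$-vector space; selecting a $\mathbb{Q}$-basis and lifting each basis element via the exponential representation produces the desired group-theoretic section $CU(A)/TDU(A) \to CU(A)$. The main obstacle I anticipate is the transfer of divisibility from the abelianization $CU(A)/DU(A)$ (formal after (1)) to $CU(A)$ itself, since $CU(A)$ is non-abelian in general; the exponential representation from \cite{Lnexp2} is precisely what makes this transfer possible, because $n$-th roots of $\exp(ih)$ are visibly given by $\exp(ih/n)$.
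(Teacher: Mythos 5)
Your treatment of the first four assertions is fine: divisibility of $\overline{\rho_A(K_0(A))}$ via tracial approximate divisibility is exactly the alternative route the paper itself indicates (it cites 3.6 of \cite{Lnexp1} but remarks that \ref{appdiv} also suffices), your arguments for the divisibility of $T\rho_A(K_0(A))$ and the splitting of (\ref{cudiv-1}) coincide with the paper's, and your derivation of the torsion-freeness of $\Aff(T(A))/T\rho_A(K_0(A))$ from divisibility of $\overline{\rho_A(K_0(A))}$ plus torsion-freeness of $\Aff(T(A))$ is a clean, self-contained substitute for the paper's second appeal to \cite{Lnexp1}.

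The gap is in your handling of $TDU(A)$ and (\ref{cudiv-2}). Your entire argument rests on the claim that every $u\in CU(A)$ can be written exactly as $u=\exp(ih)$ with $h=h^*$ and $\hat h\in\overline{\rho_A(K_0(A))}$, and you assert this is "a consequence of the finite exponential length estimates for $CU(A)$ in \cite{Lnexp2}." It is not: what \cite{Lnexp2} provides is ${\rm cel}(u)\le 2\pi$ for $u\in CU(A)$, i.e.\ short paths (equivalently, approximation by products of exponentials), which is strictly weaker than exponential rank one; a bound on the length of a path from $u$ to $1$ does not produce a single exact logarithm, and no such single-exponential representation of elements of $CU(A)$ is available in this generality (already for a UHF algebra one has $CU(A)=U(A)$, and exactness of the exponential map there is precisely the exponential-rank-one question, which is not what the cited estimates give). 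Since the $n$-th roots $\exp(ih/n)$ are the whole mechanism by which you transfer divisibility from $\overline{\rho_A(K_0(A))}$ to $TDU(A)$ and build the section in (\ref{cudiv-2}), this is a genuine missing step, not a technicality. The paper avoids the issue entirely by working modulo $DU(A)$: the de la Harpe--Skandalis determinant (Thomsen's isomorphism recalled in \ref{dcu}, see also \cite{Np}) identifies $CU(A)/DU(A)$ with $\overline{\rho_A(K_0(A))}/\rho_A(K_0(A))$ and $TDU(A)/DU(A)$ with its torsion part $T\rho_A(K_0(A))/\rho_A(K_0(A))$, so the assertions about $TDU(A)$ and the splitting reduce to the abelian statements already established in the first part, with no need for exponential representatives. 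A secondary point: even granting your exponential form, choosing a $\mathbb{Q}$-basis of $CU(A)/TDU(A)$ and lifting basis elements does not automatically yield a homomorphic section into the (generally non-abelian) group $CU(A)$ --- the image of an abelian group must be abelian, so the chosen lifts and all their roots would have to commute pairwise, which you do not address; this difficulty also disappears once one passes through the determinant, where all groups involved are abelian.
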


\begin{proof}
The fact that $\overline{\rho_A(K_0(A))}$ is divisible
follows from 3.6 of \cite{Lnexp1} (It also follows from \ref{appdiv}).
Now, if $x\in T\rho_A(K_0(A))$ and $n\ge 1$ be an integer.
Then there is $y\in \overline{\rho_A(K_0(A))}$ such that
$ny=x.$  Since $x\in T\rho_A(K_0(A)),$ there is an integer $k\ge 1$ such that $kx\in \rho_A(K_0(A)).$ Hence $kny\in \rho_A(K_0(A)).$
Therefore $y\in T\rho_A(K_0(A)).$ It follows that the short exact (\ref{cudiv-1}) splitting.
To see $\Aff(T(A))/T\rho_A(K_0(A))$ is torsion free,
it suffices to show that if $x\in \Aff(T(A))$ and $nx\in T\rho_A(K_0(A))$
for some integer $n\not=0,$ then $x\in  \overline{\rho_A(K_0(A))}.$
This, again, follows from 3.6 of \cite{Lnexp1}.
The assertion for $TDU(A)$ and (\ref{cudiv-2}) also follows (using the de la Harpe and Skandalis determant--see also
\cite{Np}).

\end{proof}

\begin{df}\label{Dkkcu}
{\rm
Let $A$ be a unital separable simple amenable \CA\, with $TR(A)\le 1.$
%{\it We assume} that the following short exact sequence
%splits:
%\beq\label{Dkkcu-1}
%0\to {\rm Tor}(K_1(A))\to K_1(A)\to K_1(A)/{\rm Tor}(K_1(A))\to 0,
%\eneq
%where $FK_1(A)=K_1(A)/{\rm Tor}(K_1(A))$ is the torsion free part of $K_1(A),$
%{\it or}
%$\rho_A(K_0(A))$ is divisible.  Denote by $s_f: K_1(A)/{\rm Tor}(K_1(A))\to K_1(A)$
%a fixed splitting map for (\ref{Dkkcu-1}). Let $\pi_f: K_1(A)\to K_1(A)/{\rm %Tor}(K_1(A))$ be the quotient map.
%So $\pi_f\circ s_f={\rm id}_{K_1(A)/{\rm Tor}(K_1(A))}.$ Put $FK_1(A)={\rm im} s_f.$
%Then $s_f\circ \pi_f|_{FK_1(A)}={\rm id}_{FK_1(A)}.$ We also write
%$K_1(A)={\rm Tor}(K_1(A))\oplus FK_1(A).$
%Let $DU(A)$ be the commutator subgroup of $U(A).$ 
%Note that $CU(A)$ is the closure of $DU(A).$ Let $\zeta: %U(A)/DU(A)\to U(A)/DU(A)$ be a
%continuous isomorphism.
%If $\af\in \Aut(A),$ the $\af$ induces
%an isomorphism $\af^{\dag}: U(A)/DU(A)\to U(A)/DU(A)$
%defined by
%$$
%\af^{\dag}({\tilde u})={\tilde \af(u)}\tforal u\in U(A),
%$$
%where ${\tilde v}$ denotes the image of a unitary $v$ in $U(A)/DU(A).$
Consider the short exact sequence:
\beq\label{Dkkcu-2}
0\to CU(A)/TDU(A)\to U(A)/TDU(A){\stackrel{\pi_{uc}}{\longrightarrow}} U(A)/CU(A)\to 0.
\eneq
Note that
$$
CU(A)/TDU(A)\cong \overline{\rho_A(K_0(A))}/T\rho_A(K_0(A))\subset {\rm Aff}(T(A))/T\rho_A(K_0(A)).
$$
is divisible. Therefore the above short exact sequence
splits. Denote by $s_{uc}: U(A)/CU(A)\to U(A)/TDU(A)$ a fixed splitting map for (\ref{Dkkcu-2}). Consider the short exact sequence:
\beq\label{Dkkcu-3}
0\to U_0(A)/CU(A)\to U(A)/CU(A){\stackrel{\pi_1}{\longrightarrow}} K_1(A)\to 0.
\eneq
Note also that
\beq\label{Dkkcu-4}
U_0(A)/CU(A)\cong {\rm Aff}(T(A))/\overline{\rho_A(K_0(A))}
\eneq
is a divisible group. Therefore the short exact sequence
in (\ref{Dkkcu-3}) is also splitting. Denote by
$s_1: K_1(A)\to U(A)/CU(A)$ a fixed splitting map.

Let $\phi: A\to A$ be a \hm.  Denote by 
\beq
&&{\tilde \phi_T}: \Aff(T(A))/T\rho_A(K_0(A))\to \Aff(T(A))/T\rho_A(K_0(A))\andeqn\\
&&\phi^{\dag}: U(A)/TDU(A)\to U(A)/TDU(A)
\eneq
the induced \hm s.
Denote by $\pi_{\bf k}: U(A)/TDU(A)\to K_1(A)$
the composition  $\pi_{\bf k}=\pi_1\circ \pi_{uc}$ and denote
by $s_{\bf k}: K_1(A)\to U(A)/TDU(A)$ the composition
$s_{uc}\circ s_1.$
Note that
\beq\label{Dkkcu-5n-1}
\pi_{\bf k}\circ s_{\bf k}={\rm id}_{K_1(A)}\andeqn\\\label{Dkkcu-5+}
\pi_{\bf k}\circ \phi^{\dag}\circ s_{\bf k}=\phi_{*1}.
\eneq
Define
$$
{\widetilde{ \phi^{\dag}}}: K_1(A)/{\rm Tor}(K_1(A))\to U_0(A)/TDU(A)\cong
\Aff(T(A))/T\rho_A(K_0(A))
$$
to be the \hm\, induced by
\beq\label{Dkkcu-5}
{\widetilde{ \phi^{\dag}}}=({\rm id}-s_{\bf k}\circ \pi_{\bf k})\circ \phi^{\dag}\circ s_{\bf k}
\eneq
Note, since $\Aff(T(A))/T\rho_A(K_0(A))$ is torsion free (see \ref{cudiv}), the above
map vanishes on ${\rm Tor}(K_1(A)).$ Therefor the map in (\ref{Dkkcu-5})
defines a \hm\, from $K_1(A)/{\rm Tor}(K_1(A))$ into \linebreak $\Aff(T(A))/T\rho_A(K_0(A))$ or to $U_0(A)/TDU(A).$

Let $\kappa\in KK_e^{-1}(A,A)^{++}$ and $\gamma: T(A)\to T(A)$
be an affine homeomorphism which is compatible with $\kappa.$
Denote by $KKFT_e^{-1}(A,A)^{++}$ the set of triples
$(\kappa, \gamma, \zeta),$ where
$$\zeta: K_1(A)/{\rm Tor}(K_1(A))\to U_0(A)/TDU(A)\cong \Aff(T(A))/T\rho_A(K_0(A))$$ is a \hm.
Let $\gamma^*: \Aff(T(A))\to \Aff(T(A))$ be induced by $\gamma.$
Then $\gamma^*(\rho_A(K_0(A)))=\rho_A(K_0(A))$ and
$\gamma^*({\overline{\rho_A(K_0(A))}})={\overline{\rho_A(K_0(A))}}.$
Therefore $\gamma$ induces an isomorphism
$$
{\widetilde{\gamma^*}}: \Aff(T(A))/T\rho_A(K_0(A))\to \Aff(T(A))/T\rho_A(K_0(A)).
$$
Denote by ${\bar \kappa}: K_1(A)/{\rm Tor}(K_1(A))\to
K_1(A)/{\rm Tor}(K_1(A))$ the isomorphism induced by $\kappa.$
 Define a product on $KKFT_e^{-1}(A,A)^{++}$ as follows:
\beq\label{Dkkcu-10}
(\kappa_1, \gamma_1, \zeta_1)\times (\kappa_2, \gamma_2, \zeta_2)
=(\kappa_2\times \kappa_1, \gamma_2\circ \gamma_1,\,\, \zeta_1\circ {\bar \kappa_2}+{\widetilde{\gamma_1^*}}\circ \zeta_2).
\eneq
$KKFT_e^{-1}(A,A)^{++}$ becomes a group with the identity
$([{\rm id}_A], {\rm id}_{T(A)}, 0)$ and, if
$$(\kappa, \gamma, \zeta)\in KKFT_e^{-1}(A,A)^{++},$$ then
\beq\label{Dkkcu-11}
(\kappa, \gamma, \zeta)^{-1}=(\kappa^{-1}, \gamma^{-1}, -{\widetilde{\gamma^*}}\circ \zeta\circ {\bar \kappa^{-1}} ).
\eneq
If $\af\in \Aut(A),$ define
${\tilde {\mathfrak{K}}}(\af)=([\af], \af_T, {\widetilde{ \af^{\dag}}}).$
Then ${\tilde {\mathfrak{K}}}$ is a \hm\, from $\Aut(A)$ into
$KKFT_e^{-1}(A,A)^{++}.$
To check ${\tilde {\mathfrak{K}}}$ is a \hm, let $\af,\bt\in Aut(A).$
It suffices to show that
\beq\label{Dkkcu-12}
\widetilde{(\af\circ \bt)^{\dag}}={\widetilde{\af^{\dag}}}\circ {\overline{\bt_{*1}}}+{\tilde \af_T}\circ {\tilde \bt^{\dag}},
\eneq
where ${\overline{\bt_{*1}}}: K_1(A)/{\rm Tor}(K_1(A))\to K_1(A)/{\rm Tor}(K_1(A))$ is an isomorphism induced by $\bt_{*1}.$
Since $U_0(A)/TDU(A)$ is torsion free, ${\widetilde \af^{\dag}}\circ {\overline{\bt_{*1}}}$ is induced by
\beq\label{Dkkcu-12n}
({\rm id}-s_{\bf k}\circ \pi_{\bf k})\circ\af^{\dag}\circ s_{\bf k}\circ \bt_{*1}.
\eneq
%The right side is is induced by (see (\ref{Dkkcu-5+}))
%\beq
%({\rm id}-s_{\bf}\circ \pi_{\bf k})\af^{\dag}\circ s_{\bf k}\circ {\overline{\bt_{*1}}}+{\tilde \af_T}\circ ({\rm id}-s_{\bf k}\circ \pi_{\bf k})\bt^{\dag}\circ s_{\bf k}.
%\eneq
Note that
\beq\label{Dkkcu-14}
{\rm im}(({\rm id}-s_{\bf k}\circ \pi_{\bf k})\circ \bt^{\dag}\circ s_{\bf k})\subset U_0(A)/TDU(A).
\eneq
Therefore
\beq\label{Dkkcu-15}
({\rm id}-s_{\bf k}\circ \pi_{\bf k})\af^{\dag}({\rm id}-s_{\bf k}\circ \pi_{\bf k})\circ \bt^{\dag}\circ s_{\bf k})=
{\tilde \af_T}\circ ({\rm id}-s_{\bf k}\circ \pi_{\bf k})\circ \bt^{\dag}\circ s_{\bf k}.
\eneq
Then, by (\ref{Dkkcu-5+}) and (\ref{Dkkcu-15}), ${\widetilde{(\af\circ \bt)^{\dag}}}$ is induced by
\beq\label{Dkkvu-16}
&&\hspace{-0.6in}({\rm id}-s_{\bf k}\circ \pi_{\bf k})(\af\circ \bt)^{\dag}\circ s_{\bf k}\\
&=& ({\rm id}-s_{\bf k}\circ \pi_{\bf k})\af^{\dag}\circ s_{\bf k}\circ \pi_{\bf k}\circ \bt^{\dag}\circ s_{\bf k}\\
&&+({\rm id}-s_{\bf k}\circ \pi_{\bf k})\af^{\dag}\circ({\rm id}-s_{\bf k}\circ \pi_{\bf k})\circ \bt^{\dag}\circ s_{\bf k}\\
&=& ({\rm id}-s_{\bf k}\circ \pi_{\bf k})\af^{\dag}\circ s_{\bf k}\circ \bt_{*1}\\
&&+{\tilde \af_T}\circ ({\rm id}-s_{\bf k}\circ \pi_{\bf k})\circ \bt^{\dag}\circ s_{\bf k}.
\eneq
But
$$
{\tilde \af_T}\circ ({\rm id}-s_{\bf k}\circ \pi_{\bf k})\circ \bt^{\dag}\circ s_{\bf k}
$$
induces ${\tilde \af_T}\circ {\widetilde{\bt^{\dag}}}.$ It follows that
$({\rm id}-s_{\bf k}\circ \pi_{\bf k})(\af\circ \bt)^{\dag}\circ s_{\bf k}$ induces
$${\widetilde{\af^{\dag}}}\circ {\overline{\bt_{*1}}}+{\tilde \af_T}\circ {\widetilde{\bf^{\dag}}}.$$
Therefore (\ref{Dkkcu-12}) holds.
Thus, indeed, ${\tilde {\mathfrak{K}}}$ is a \hm.
}
\end{df}

\begin{thm}\label{DUext}
Let $A$  be a unital separable simple amenable \CA\, with $TR(A)\le 1.$ Suppose that $A$ satisfies the UCT,
% and either $K_1(A)/{\rm Tor}(K_1(A))$
%is a free group, or
%$\rho_A(K_0(A))$ is divisible.
Suppose that $(\kappa, \gamma, \zeta)\in KKFT_e^{-1}(A,A)^{++}.$
Then there exists an automorphism  $\phi: A\to A$ such that
$$
([\phi],\phi_T, {\widetilde{\phi^{\dag}}})=(\kappa, \gamma, \zeta).
$$

\end{thm}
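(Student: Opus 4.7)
The plan is to first use an existence theorem to produce an initial automorphism $\phi_0$ with the correct $KK$-class $\kappa$ and tracial action $\gamma,$ and then to compose $\phi_0$ with a second automorphism $\mu$ that corrects the rotation invariant to exactly $\zeta.$

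For the first step, by the surjectivity of $\mathfrak{K}$ in Theorem~\ref{CMT2} (combined with Theorem~\ref{Mext}, which guarantees Rokhlin-property representatives in the appropriate equivalence class), there is $\phi_0\in \Aut(A)$ realizing some compatible triple $([\phi_0],(\phi_0)_T,\phi_0^{\ddag})=(\kappa,\gamma,\lambda_0)\in KKUT_e^{-1}(A,A)^{++}.$ Set $\zeta_0:=\widetilde{\phi_0^{\dag}}$ and
$$\eta := \widetilde{\gamma^*}^{-1}\circ(\zeta - \zeta_0)\;\in\; \Hom\bigl(K_1(A)/{\rm Tor}(K_1(A)),\,\Aff(T(A))/T\rho_A(K_0(A))\bigr).$$
The map $\widetilde{\gamma^*}$ is invertible because $\gamma$ is an affine homeomorphism preserving $\rho_A(K_0(A))$ and hence also $T\rho_A(K_0(A)).$

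For the second step, I construct $\mu\in \Aut(A)$ with $[\mu]=[\id_A]$ in $KK(A,A),$ $\mu_T=\id_{T(A)},$ and $\widetilde{\mu^{\dag}}=\eta.$ Pick representatives in $U(M_\infty(A))$ of a countable generating set of $K_1(A)/{\rm Tor}(K_1(A)),$ and carry out an Elliott-style approximate intertwining on $A:$ at stage $n,$ apply Theorem~\ref{Ext1} (in its $K_1$ incarnation obtained by matrix amplification) to produce a unitary $u_n\in U(A)$ approximately commuting with an increasing dense finite subset ${\cal F}_n\subset A,$ having vanishing Bott obstruction on an increasing finite set ${\cal P}_n\subset \underline{K}(A),$ and whose effect on the fixed unitary representatives prescribes, modulo $TDU(A),$ the required values of $\eta$ on the corresponding generators of $K_1(A)/{\rm Tor}(K_1(A)).$ The successive inner perturbations ${\rm Ad}\,u_n$ converge via the usual two-sided intertwining to an automorphism $\mu.$ Since $\mu$ is a limit of inner automorphisms, $[\mu]=[\id_A]$ and $\mu_T=\id_{T(A)},$ while the construction forces $\widetilde{\mu^{\dag}}=\eta.$

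For the third step, set $\phi:=\phi_0\circ \mu.$ Then $[\phi]=[\mu]\times[\phi_0]=\kappa$ and $\phi_T=\mu_T\circ(\phi_0)_T=\gamma.$ Using $\mu_{*1}=\id$ together with the composition formula (\ref{Dkkcu-12}),
$$\widetilde{\phi^{\dag}} \;=\; \widetilde{\phi_0^{\dag}}\circ\overline{\mu_{*1}} + \widetilde{\gamma^*}\circ\widetilde{\mu^{\dag}} \;=\; \zeta_0+\widetilde{\gamma^*}\circ\eta \;=\; \zeta,$$
so $\widetilde{\mathfrak{K}}(\phi)=(\kappa,\gamma,\zeta)$ as required. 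The principal obstacle is Step 2: the rotation invariant $\widetilde{\mu^{\dag}}$ lives in a refinement of the quotient $U(A)/CU(A)$ directly visible to Theorem~\ref{Ext1} and the Bott homotopy lemmas of Section 3, so the construction must also track the $CU(A)/TDU(A)$ component of each $u_n$ at every stage, lifting the prescribed values of $\eta$ through the splittings of (\ref{Dkkcu-2}) and (\ref{Dkkcu-3}) provided by Lemma~\ref{cudiv}.
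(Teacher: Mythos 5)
Your two-step strategy (realize $(\kappa,\gamma)$ first, then correct the $\dag$-invariant by composing with an automorphism that is trivial on the rest of the invariant) has the same shape as the paper's argument, but Step 2 as you set it up cannot work, for a structural reason. Any $\mu$ obtained as a point-norm limit of inner automorphisms satisfies $\mu^{\ddag}=\mathrm{id}$ on $U(A)/CU(A)$: for unitaries $u,v$ one has $u^*vuv^*\in DU(A)$, so $\mu(v)v^*\in CU(A)$. Consequently $\widetilde{\mu^{\dag}}$ takes values only in the subgroup $CU(A)/TDU(A)\cong \overline{\rho_A(K_0(A))}/T\rho_A(K_0(A))$ of $U_0(A)/TDU(A).$ But in your Step 1 you accept an \emph{uncontrolled} $\lambda_0=\phi_0^{\ddag},$ so the discrepancy $\zeta-\widetilde{\phi_0^{\dag}}$ will in general have a nonzero image in $\Aff(T(A))/\overline{\rho_A(K_0(A))}$ and is then unreachable by $\widetilde{\gamma^*}\circ\widetilde{\mu^{\dag}}$ for any approximately inner $\mu.$ The repair is precisely where the paper starts: one must use the freedom in choosing the $U(A)/CU(A)$-component and define $\lambda$ explicitly from $\zeta$ via the splittings $s_1$ and $\overline{\Delta}$ (formula (\ref{DUext-1-1})), so that after realizing $(\kappa,\gamma,\lambda)$ the residual discrepancy automatically lies in $CU(A)/TDU(A).$

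Even for a $CU(A)/TDU(A)$-valued discrepancy, the mechanism you describe does not produce $\mu.$ Theorem \ref{Ext1} prescribes the classes $\overline{\langle(1-\phi(p_i))+\phi(p_i)u\rangle}$ in $U(A)/CU(A)$, i.e.\ exponential/Bott-type data attached to $K_0$-classes; it says nothing about the class of $\mu(v)v^*$ modulo $TDU(A)$ for $K_1$-representatives $v.$ Moreover each individual ${\rm Ad}\,u_n$ moves \emph{no} unitary at all modulo $DU(A)$, so ``prescribing the values of $\eta$ on generators at stage $n$'' has no content at any finite stage; a nontrivial $\widetilde{\mu^{\dag}}$ can only emerge in the limit, and it is exactly a rotation-map phenomenon. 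What is needed (and what the paper invokes) is the existence of an automorphism $\af$ with $([\af],\af_T,\af^{\ddag})=([\mathrm{id}_A],\mathrm{id}_T,\mathrm{id}_A^{\ddag})$ and prescribed $\overline{R_{\mathrm{id},\af}}$ (Theorems 9.8 and 9.10 of \cite{Lninv}, i.e.\ the kernel term of (\ref{CMT2-1})), together with (i) lifting the discrepancy to an honest homomorphism $\eta\colon K_1(A)\to\overline{\rho_A(K_0(A))}$ using the divisibility and splitting of Lemma \ref{cudiv}, and (ii) the determinant computation identifying $\Delta(\af(v)v^*)$ with $R_{\mathrm{id},\af}$ on the relevant classes, which is what shows $\widetilde{(\af\circ\psi)^{\dag}}=\widetilde{\psi^{\dag}}+\eta_1=\zeta.$ Your Step 3 composition algebra is correct once such a correction automorphism exists; a further minor point is that a point-norm limit of inner automorphisms only yields $[\mu]=[\mathrm{id}_A]$ in $KL$, so even the $KK$-triviality you assert would require the asymptotic, not approximate, version of innerness.
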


\begin{proof}
Note, with the assumption on $A,$ \ref{Dkkcu} applies.
Let ${\bar \zeta}: K_1(A)/{\rm Tor}(K_1(A))\to U_0(A)/CU(A)$ be the \hm\, induced by $\zeta.$
In what follows in this proof, we will identify $U_0(A)/TDU(A)$
with $\Aff(T(A))/T\rho_A(K_0(A)),$ and
$CU(A)/TDU(A)$ with ${\overline{\rho_A(K_0(A))}}/T\rho_A(K_0(A))$  whenever it is convenient.
Let ${\overline{\bar \gamma^*}}: \Aff(T(A))/{\overline{\rho_A(K_0(A))}}
\to \Aff(T(A))/{\overline{\rho_A(K_0(A))}}$ be the isomorphism
induced by $\gamma.$
Define $\lambda: U(A)/CU(A)\to U(A)/CU(A)$ by
\beq\label{DUext-1-1}
\lambda(x)=s_1\circ \kappa\circ \pi_1(x)+({\bar \zeta}\circ \pi_f\circ \pi_1(x)+{\overline{\bar \gamma^*}}\circ {\bar \Delta}(x-s_1\circ\pi_1(x))
\eneq
for all $x\in U(A)/CU(A).$
Then $\lambda$ defines a continuous \hm\, from $U(A)/CU(A)$ to
$U(A)/CU(A).$ Since both $\kappa$ and ${\overline{\bar \gamma^*}}$ are isomorphism, it is easy to check that $\lambda$ is an isomorphism.
It follows that
\beq\label{DUext-1}
(\kappa, \gamma, \lambda)\in KKUT_e^{-1}(A,B)^{++}.
\eneq
It follows from Theorem 8.6 of \cite{Lninv} that there exits a unital
\hm\, $\psi: A\to A$ such that
\beq\label{DUext-2}
([\psi], \psi_T, \psi^{\ddag})=(\kappa, \gamma, \lambda).
\eneq
%In particular,
%\beq\label{DUext-3}
%\Delta\circ \psi^{\dag}\circ \Delta_A^{-1}={\tilde \gamma^*}
%=\Delta\circ \zeta\circ \Delta_A^{-1}.
%\eneq
As in 9.10 of \cite{Lninv}, we may assume that $\psi$ is an automorphism.
Define  $\eta_1: K_1(A)/{\rm Tor}(K_1(A))\to CU(A)/TDU(A)$ by
\beq\label{DUext-4}
\eta_1(x)=\zeta(x){\widetilde{\psi^{\dag}}}(-x))\tforal x\in K_1(A)/{\rm Tor}(K_1(A)).
\eneq
This gives a \hm.  By \ref{cudiv}, 
\beq\label{DUext-4+}
0 \to T\rho_A(K_0(A))\to \overline{\rho_A(K_0(A))}\to \overline{\rho_A(K_0(A))}/T\rho_A(K_0(A))\to 0
\eneq
splits. 
One obtains a \hm\, $\eta_2: K_1(A)/{\rm Tor}(K_1(A))\to \overline{\rho_A(K_0(A))}$ such that
$\pi_a\circ \eta_2=\eta_1,$ where
$$
\pi_a: \overline{\rho_A(K_0(A))}\to \overline{\rho_A(K_0(A))}/T\rho_A(K_0(A))\cong CU(A)/TDU(A)
$$
is the quotient map. 
Define $\eta: K_1(A)\to \overline{\rho_A(K_0(A))}$ by
$$
\eta(x)=\eta_2\circ \pi_f(x)\tforal x\in K_1(A),
$$
where $\pi_f: K_1(A)\to K_1(A)/{\rm Tor}(K_1(A))$ is the quotient map. 
It follows from 9.10 of \cite{Lninv} and 9.8 of \cite{Lninv} that there is an automorphism $\af$ such that
\beq\label{DUext-5}
([\af], \af_T, \af^{\ddag})=([{\rm id}_A], {\rm id}_T, {\rm id}_A^{\ddag})
\eneq
and
\beq\label{DUext-6}
\overline{R_{\id, \af}}={\overline{\eta}}\circ (\psi_{*1}^{-1}),
\eneq
where ${\bar \eta}\in {\rm Hom}(K_1(A), {\overline{\rho_A(K_0(A))}})/{\cal R}_0$
is the image of $\eta$ in the quotient.
Define $\phi=\af\circ \psi.$
Then
\beq\label{DUext-7}
([\phi], \phi_T)=(\kappa, \gamma).
\eneq
We will show that
\beq\label{DUext-8}
{\widetilde{\phi^{\dag}}}=\zeta.
\eneq
Then ${\widetilde{\phi^{\dag}}}$ is induced by 
\beq\label{DUext-9}
&&\hspace{-0.4in}({\rm id}-s_{\bf k}\circ \pi_{\bf k})(\af\circ \psi)^{\dag}\circ s_{\bf k}\\
&=& \af^{\dag}(\psi)^{\dag}\circ s_{\bf k}
-s_{\bf k}\circ \pi_{\bf k}\circ (\af^{\dag}\circ \psi^{\dag})\circ s_{\bf k}\\
&=& \af^{\dag}\circ (\psi)^{\dag}\circ s_{\bf k}
-s_{\bf k}\circ \af_{*1}\circ \psi_{*1}\\
&=& \af^{\dag}\circ ({\rm id}-s_{\bf k}\circ \pi_{\bf k})\circ\psi^{\dag}\circ s_{\bf k}+\af^{\dag}\circ s_{\bf k}\circ \pi_{\bf k}\circ\psi^{\dag}\circ s_{\bf k}-s_{\bf k}\circ \psi_{*1}\\
&=& {\tilde \af_T}\circ ({\rm id}-s_{\bf k}\circ \pi_{\bf k})\circ\psi^{\dag}\circ s_{\bf k}+\af^{\dag}\circ s_{\bf k}\circ \psi_{*1}-s_{\bf k}\circ \psi_{*1}\\
&=&({\rm id}-s_{\bf k}\circ \pi_{\bf k})\circ\psi^{\dag}\circ s_{\bf k}+(\af^{\dag}\circ s_{\bf k}\circ \psi_{*1}-s_{\bf k}\circ \psi_{*1})
\eneq
Note that
$$
({\rm id}-s_{\bf k}\circ \pi_{\bf k})\circ\psi^{\dag}\circ s_{\bf k}
$$
induces ${\widetilde{\psi^{\dag}}}.$ 
In other words, ${\widetilde{\phi^{\dag}}}-{\widetilde{\psi^{\dag}}}$ is induced by
$$
\af^{\dag}\circ s_{\bf k}\circ \psi_{*1}-s_{\bf k}\circ \psi_{*1}.
$$

For any $u\in U(A),$ let $x$ be the image of $ s_1\circ \psi_{*1}([u])$ in  $U(A)/DU(A).$
Then, since $\af^{\ddag}={\rm id}_A^{\ddag},$
\beq\label{DUext-10}
\af^{d}(x)x^*\in CU(A)/DU(A),
\eneq
where $\af^d: U(A)/DU(A)\to U(A)/DU(A)$ is the isomorphism induced by $\af.$ 
Let $v\in U(A)$ such that $\tilde{v}=x$ in $U(A)/DU(A).$
If we identify  $CU(A)/DU(A)=\overline{\rho_A(K_0(A))}/\rho_A(K_0(A)),$
then
\beq\label{DUext-10n-1}
\af^{d}(x)x^*=\overline{\Delta(\af(v)v^*)}.
\eneq
However
\beq\label{DUext-11}
\Delta(\af(v)v^*)=R_{{\rm id}_A, \af}(\psi_{*1}([u]))=\eta([u]) \,\,\,\,({\rm mod} \,{\cal R}_0 \,).
\eneq
It follows that
\beq\label{DUext-12}
{\widetilde{\phi^{\dag}}}={\widetilde{\psi^{\dag}}}
+\eta_1=\zeta.
\eneq	
This ends the proof.
\end{proof}

\begin{thm}\label{LT}
Let $A$ be a unital separable simple amenable \CA\, with $TR(A)\le 1$ which satisfies the UCT. Suppose that $K_1(A)/{\rm Tor}(K_1(A))$ is free, or
$$
0\to {\rm ker}\rho_A\to K_0(A)\to \rho_A(K_0(A))\to 0
$$
splits.

 Then two automorphisms $\af, \bt\in \Aut_R(A)$ are
 outer conjugate, i.e.,
there exists a unitary $u\in U(A)$ and $\sigma\in \Aut(A)$ such that
\beq\label{LT-0-1}
\af={\rm Ad}\, u\circ \sigma\circ \bt \circ \sigma^{-1}
\eneq
 if and only if
${\tilde {\mathfrak{K}}}(\af)$ and ${\tilde {\mathfrak{K}}}(\bt)$ are conjugate in $KKFT_e^{-1}(A,A)^{++},$
i.e., there exists \\
$\zeta\in KKFT_e^{-1}(A,A)^{++}$ such that
\beq\label{LT-0-2}
{\tilde{\mathfrak{K}}}(\af)=\zeta\times {\tilde{\mathfrak{K}}}(\bt)\times \zeta^{-1}.
\eneq
Moreover, when (\ref{LT-0-2}) holds, one can require that ${\tilde{\mathfrak{K}}}(\sigma)=\zeta.$

If, in addition, $H_1(K_0(A), K_1(A))=K_1(A),$
then there exists a sequence of unitaries and a sequence $\{\sigma_n\}\subset \Aut(A)$  such that
\beq\label{LT-0}
\af={\rm Ad}\, u_n\circ \sigma_n\circ \bt\circ \sigma_n^{-1}\andeqn
\lim_{n\to\infty}\|u_n-1\|=0.
\eneq
if and only if
there exists $\zeta\in KKFT_e^{-1}(A,A)^{++}$ such that
\beq\label{LT-00}
{\tilde {\mathfrak{K}}}(\af)=\zeta\times {\tilde {\mathfrak{K}}}(\bt)\times \zeta^{-1}.
\eneq

Moreover, when (\ref{LT-0}) holds, one can require that ${\tilde{\mathfrak{K}}}(\sigma_n)=\zeta$
for all $n.$
\end{thm}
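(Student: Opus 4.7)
The proof splits into an easy direction and a hard direction. For the easy direction, assume $\af={\rm Ad}\,u\circ\sigma\circ\bt\circ\sigma^{-1}$. Apply the group homomorphism $\tilde{\mathfrak{K}}:\Aut(A)\to KKFT_e^{-1}(A,A)^{++}$ of \ref{Dkkcu}, using that inner automorphisms are trivial for $\tilde{\mathfrak{K}}$ because $({\rm Ad}\,u)_{*i}=\id$, $({\rm Ad}\,u)_T=\id$, and the identity $u^*vuv^{-1}=[u^{-1},v]\in DU(A)\subset TDU(A)$ shows that $({\rm Ad}\,u)^\dag$ is the identity on $U(A)/TDU(A)$. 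This gives $\tilde{\mathfrak{K}}(\af)=\tilde{\mathfrak{K}}(\sigma)\times\tilde{\mathfrak{K}}(\bt)\times\tilde{\mathfrak{K}}(\sigma)^{-1}$, so $\zeta:=\tilde{\mathfrak{K}}(\sigma)$ witnesses the conjugacy and the moreover assertion is automatic.

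For the hard direction, given $\zeta$ conjugating $\tilde{\mathfrak{K}}(\bt)$ to $\tilde{\mathfrak{K}}(\af)$, I will first apply Theorem \ref{DUext} to obtain $\sigma_0\in\Aut(A)$ with $\tilde{\mathfrak{K}}(\sigma_0)=\zeta$. Put $\bt_1:=\sigma_0\circ\bt\circ\sigma_0^{-1}$, which lies in $\Aut_R(A)$ because the Rokhlin property is preserved under conjugation by an automorphism, and satisfies $\tilde{\mathfrak{K}}(\bt_1)=\tilde{\mathfrak{K}}(\af)$. It suffices to produce $u\in U(A)$ and $\sigma_1\in\Aut(A)$ with $\tilde{\mathfrak{K}}(\sigma_1)$ trivial such that $\af={\rm Ad}\,u\circ\sigma_1\circ\bt_1\circ\sigma_1^{-1}$: then $\sigma_1\circ\sigma_0$ exhibits the outer conjugacy of $\af$ and $\bt$ with $\tilde{\mathfrak{K}}(\sigma_1\circ\sigma_0)=\zeta$.

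The core of the argument is to promote the equality $\tilde{\mathfrak{K}}(\af)=\tilde{\mathfrak{K}}(\bt_1)$ to the hypotheses of Theorem \ref{CM1}, namely $\mathfrak{K}(\af)=\mathfrak{K}(\bt_2)$ and $\overline{R}_{\af,\bt_2}=0$ for a $\bt_2$ obtained from $\bt_1$ by outer-conjugate adjustments of trivial $\tilde{\mathfrak{K}}$. The gap between $\tilde{\mathfrak{K}}$ and $\mathfrak{K}$ sits in the action of $\phi^\ddag$ on $\mathrm{Tor}(K_1(A))$ and in the cross-term of $\phi^\dag$ with values in the kernel $CU(A)/TDU(A)\cong\overline{\rho_A(K_0(A))}/T\rho_A(K_0(A))$ of the surjection $U_0(A)/TDU(A)\twoheadrightarrow U_0(A)/CU(A)$. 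Under the hypothesis that $K_1(A)/\mathrm{Tor}(K_1(A))$ is free (or dually that $0\to\ker\rho_A\to K_0(A)\to\rho_A(K_0(A))\to 0$ splits), the relevant short exact sequences split, so this gap can be realized as the $\mathfrak{K}$-shift of an explicit automorphism $\gamma_0$ with $\tilde{\mathfrak{K}}(\gamma_0)=\id$, constructed as in the last stage of the proof of \ref{DUext} using 9.8 and 9.10 of \cite{Lninv}. Composing $\bt_1$ with such $\gamma_0$ aligns $\mathfrak{K}$, and a further strongly asymptotically inner adjustment, available because the kernel of $\mathfrak{K}$ on $\Aut_R(A)/\hspace{-0.05in}\sim_{scc}$ is $\Hom(K_1(A),\overline{\rho_A(K_0(A))})/{\cal R}_0$ by Theorem \ref{CMT2}, kills $\overline{R}$. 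Theorem \ref{CM1} then gives strong outer conjugacy of $\af$ and the adjusted $\bt_2$, which unwinds to the outer conjugacy of $\af$ and $\bt$. The main obstacle is precisely this splitting-dependent adjustment: without the hypothesis, the lifting from $K_1(A)/\mathrm{Tor}(K_1(A))$ to $K_1(A)$, or dually from $\rho_A(K_0(A))$ back into $K_0(A)$, could be obstructed.

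For the final furthermore, assuming $H_1(K_0(A),K_1(A))=K_1(A)$, Theorem \ref{CMT3} identifies $\Aut_R(A)/\hspace{-0.05in}\sim_{scc}$ with $\Aut_R(A)/\hspace{-0.05in}\sim_{saucc}$, so the strong outer conjugacy produced above automatically provides a sequence $(u_n,\sigma_n)$ with $\af={\rm Ad}\,u_n\circ\sigma_n\circ\bt\circ\sigma_n^{-1}$, $\|u_n-1\|\to 0$, and $\tilde{\mathfrak{K}}(\sigma_n)=\zeta$ by inheritance from the fixed choice of $\sigma_0$.
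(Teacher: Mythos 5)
Your skeleton agrees with the paper up to the reduction: the easy direction via the homomorphism property of $\tilde{\mathfrak{K}}$ and the triviality of inner automorphisms is fine, as is the use of Theorem \ref{DUext} to realize $\zeta$ by an automorphism $\sigma'$ and to replace $\bt$ by $\bt_1=\sigma'\circ\bt\circ(\sigma')^{-1}$ with $\tilde{\mathfrak{K}}(\bt_1)=\tilde{\mathfrak{K}}(\af)$. But from there the core step is missing, and the "gap between $\tilde{\mathfrak{K}}$ and $\mathfrak{K}$" that you propose to repair is not where the difficulty lies. With the fixed splittings $s_1$ and $s_{uc}$, the cross term of $\phi^{\ddag}$ is obtained from $\widetilde{\phi^{\dag}}$ by composing with $\pi_{uc}$ and $\pi_f$, and it automatically vanishes on ${\rm Tor}(K_1(A))$ because $U_0(A)/TDU(A)$ is torsion free; hence $\tilde{\mathfrak{K}}(\af)=\tilde{\mathfrak{K}}(\bt_1)$ already yields $\mathfrak{K}(\af)=\mathfrak{K}(\bt_1)$ (this is exactly the step (\ref{LT-6})$\Rightarrow$(\ref{LT-7}) in the paper, and it needs no freeness or splitting hypothesis). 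What does \emph{not} come for free, and what the hypothesis is actually for, is the vanishing of the rotation obstruction $\overline{R_{\af,\bt_1}}$, which must be established before Theorem 7.2 of \cite{Lninv} (equivalently \ref{CM1}) and Theorem \ref{MT1} can be invoked.

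Your proposed mechanism for removing that obstruction does not work. Conjugating $\bt_1$ by a strongly asymptotically inner automorphism leaves $\overline{R}_{\af,\cdot}$ unchanged (the conjugate is asymptotically unitarily equivalent to $\bt_1$, so the relative rotation class between them is $0$ by \ref{CM1}); on the other hand, composing $\bt_1$ with an automorphism in the kernel of $\mathfrak{K}$ — which is what Theorem \ref{CMT2} actually provides, and these are \emph{not} strongly asymptotically inner when their rotation class is nonzero — does change the rotation class but destroys the relation to $\bt$: outer conjugacy of $\af$ with $\gamma_0\circ\bt_1$ says nothing about $\af$ and $\bt$. The paper's proof instead carries out a computation that your proposal never performs: from $\widetilde{\af^{\dag}}=\widetilde{\bt_1^{\dag}}$ one gets $R_{\af,\bt_1}\circ\theta\in{\rm Hom}(K_1(A),T\rho_A(K_0(A)))$; since $T\rho_A(K_0(A))/\rho_A(K_0(A))$ is torsion and $K_1(A)/{\rm Tor}(K_1(A))$ is torsion free, the image actually lies in $\rho_A(K_0(A))$; then the hypothesis (freeness of $K_1(A)/{\rm Tor}(K_1(A))$, or splitting of $0\to{\rm ker}\rho_A\to K_0(A)\to\rho_A(K_0(A))\to 0$) is used precisely to lift the resulting map to $\dt: K_1(A)/{\rm Tor}(K_1(A))\to K_0(A)$ with $\rho_A\circ\dt=R$, and correcting $\theta$ by $\iota_{*1}\circ\dt\circ\pi_f$ gives $\overline{R_{\af,\bt_1}}=0$. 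Without this step the appeal to \ref{CM1}/\ref{MT1} is unjustified, so the hard direction — and with it the final statement under $H_1(K_0(A),K_1(A))=K_1(A)$, which rests on it — is not proved.
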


\begin{proof}
Let $\af$ and $\bt$ be two automorphisms with the Rokhlin property such that
$$
\af={\rm Ad}\, u\circ \sigma\circ\bt \circ  \sigma^{-1}
$$
for some unitary $u\in U(A)$ and some $\sigma\in \Aut(A).$
Note that
$$
({\rm Ad}\, u, ({\rm Ad}\, u)_T, ({\rm Ad}\, u)^d)=
([{\rm id}_A], {\rm id}_T, {\rm id}_{U(A)/DU(A)}),
$$
where $({\rm Ad}\, u)^d: U(A)/DU(A)\to U(A)/DU(A)$ is the isomorphism
induced by ${\rm Ad}\, u.$
It follows that
\beq\label{LT-3}
{\tilde {\mathfrak{K}}}(\af)={\tilde {\mathfrak{K}}}(\sigma)\times
{\tilde {\mathfrak{K}}}(\bt)\times {\tilde {\mathfrak{K}}}(\sigma)^{-1}.
\eneq

Now suppose that there exists $\zeta\in KKFT_e^{-1}(A,A)^{++}$
such that
\beq\label{LT-4}
{\tilde {\mathfrak{K}}}(\af)=
\zeta\times {\tilde {\mathfrak{K}}}(\bt)\times \zeta^{-1}.
\eneq
Write $\zeta=(\zeta_K, \zeta_T, \lambda),$
where $\zeta_K\in KK_e^{-1}(A,A)^{++},$
$\zeta_T: T(A)\to T(A)$ is an affine homeomorphism and
$\lambda: K_1(A)/{\rm Tor}(K_1(A))\to \overline{\rho_A(K_0(A))}/\rho_A(K_0(A))$ is a \hm.
It follows from \ref{DUext} that there exists $\sigma'\in \Aut(A)$
such that
\beq\label{LT-5-1}
([\sigma'], \sigma'_T, {\tilde (\sigma')^{\dag}})=
(\zeta_K, \zeta_T, \lambda).
\eneq
Let $\bt_1=\sigma'\circ \bt \circ(\sigma')^{-1}.$
Then
\beq\label{LT-6}
([\af], \af_T, {\widetilde{ \af^{\dag}}})=([\bt_1], (\bt_1)_T, {\tilde\bt_1^{\dag}}).
\eneq
In particular,
\beq\label{LT-7}
\af^{\ddag}=\bt_1^{\ddag}.
\eneq
We will now show that
\beq\label{LT-8}
\overline{R}_{\af, \bt_1}=0.
\eneq
First we note, since $[\af]=[\bt_1],$ there is \hm\, $\Theta: \underline{K}(A)\to \underline{K}(M_{\af,\bt_1})$ such that
\beq\label{LT-9}
[\pi_0]\circ \Theta=[{\rm id}_A],
\eneq
where $\pi_0: M_{\af, \bt_1}\to A$ is the point-evaluation of the mapping torus at $t=0.$
Put $\theta=\Theta|_{K_1(A)}.$ 
Let $u\in U(A)$ be a unitary. Let $z\in U(M_{\af, \bt_1})$ be a unitary such that
$z(0)=\af(u)$ and $z(1)=\bt_1(u).$ Moreover, we may assume that $z$ is piece-wise smooth.
Define $z_1(t)=\bt_1(u)^*z(t)$ for $t\in [0,1].$ Then $z_1$ is a continuous and piece-wise smooth with
$z_1(0)=\af(u)\bt_1(u)^*$ and $z_1(1)=1_A.$ By (\ref{LT-6}),
\beq\label{LT-10}
{1\over{2\pi i}}\int_0^1\tau( {dz_1(t)\over{dt}}z_1(t)^* )dt\in T\rho_A(K_0(A)),
\eneq
 where $\tau\in T(A).$  
  It follows that
\beq\label{LT-10+}
R_{\af, \bt_1}([z])\in T\rho_A(K_0(A)).
\eneq
On the other hand there exist two projections $p,\, q\in M_l(A)$ (for some integer $l\ge 1$) such that
\beq\label{LT-11}
\theta([u])=[(z+1_{M_{(l-1)}})v],
\eneq
where $v(t)=(e^{i2\pi t}p +(1_{M_l(A)}-p))(e^{-2\pi t}q+(1_{M_l(A)}-q))$ for $t\in [0,1].$
Then
\beq\label{LT-12}
R_{\af, \bt_1}([(z+1_{M_{(l-1)}})v])=R_{\af, \bt_1}([z])+R_{\af, \bt_1}([v])\in T\rho_A(K_0(A)).
\eneq
It follows that
\beq\label{LT-13}
R_{\af, \bt_1}\circ \theta\in {\rm Hom}(K_1(A), T\rho_A(K_0(A))).
\eneq
Since $\Aff(T(A))$ is torsion free so is its subgroup $T\rho_A(K_0(A)),$ therefore $R_{\af, \bt_1}\circ \theta$ factors through \linebreak  $K_1(A)/{\rm Tor}(K_1(A)),$
i.e., there is $R: K_1(A)/{\rm Tor}(K_1(A))\to T\rho_A(K_0(A))$ such that
\beq\label{LT-14}
R\circ \pi_f=R_{\af, \bt_1}\circ \theta.
\eneq
(Recall that $\pi_f: K_1(A)\to K_1(A)/{\rm Tor}(K_1(A))$ is the quotient map.)
 However, since \linebreak $T\rho_A(K_0(A))/\rho_A(K_0(A))$
 is torsion and $K_1(A)/{\rm Tor}(K_1(A))$ is torsion free,   one actually  has 
 that
 \beq\label{LT-10n}
 R_{\af, \bt_1}\circ \theta\in {\rm Hom}(K_1(A), \rho_A(K_0(A))).
\eneq
If $K_1(A)/{\rm Tor}(K_1(A))$ is free, or 
 $$
 0\to {\rm ker}\rho_A\to K_0(A)\to \rho_A(K_0(A))\to 0
 $$
 splits, 
 there is $\dt: K_1(A)/{\rm Tor}(K_1(A))\to K_0(A)$
such that
\beq\label{LT-15}
\rho_A\circ \dt=R.
\eneq
Put $\theta_1=\theta|_{K_1(A)}-\iota_{*1}\circ\dt\circ \pi_f,$ 
where $\iota: SA\to M_{\af, \bt_1}$ is the embedding (see \ref{Drotation}). 
%Define $\theta_0: K_1(A)\to K_0(M_{\af, \bt_1})$ to be zero. 
By the universal coefficient theorem (as well as Dadarlat and Loring's  result in \cite{DL}), 
there is $\Theta_0'\in {\rm Hom}_{\Lambda}(\underline{K}(A),\underline{K}(SA))$ such that
$\Theta_0'|_{K_0(A)}=0$ and $\Theta_0'|_{K_1(A)}=\dt\circ \pi_f.$
Define 
$$
\Theta_1=\Theta-[\iota]\circ \Theta_0.
$$
Then 
\beq\label{LT-16}
[\pi_0]\circ \Theta_1=[{\rm id}]\andeqn \Theta_1|_{K_1(A)}=\theta_1.
\eneq
Note that, since $[\af]=[\bt],$
\beq\label{LT-16+1}
R_{\af, \bt_1}|_{K_0(A)}=\rho_A.
\eneq
It follows (from (\ref{LT-14}) and (\ref{LT-15})) that
\beq\label{LT-16+}
R_{\af,\bt_1}\circ \theta_1=0.
\eneq
In other words,
\beq\label{LT-20}
\overline{R_{\af, \bt_1}}=0.
\eneq
It follows from (\ref{LT-20}), (\ref{LT-6}), (\ref{LT-7}) and Theorem 7.2 of \cite{Lninv} that
$\af$ and $\bt_1$ are asymptotically unitarily equivalent.

It follows from Theorem \ref{MT1} that there exits a unitary
$u\in U(A)$ and a strongly asymptotically inner  automorphism
\beq\label{LT-21}
\af&=&{\rm Ad}\, u\circ \sigma_1\circ \bt_1\circ \sigma_1^{-1}\\
&=& {\rm Ad}\, u\circ \sigma\circ \bt\circ \sigma^{-1},
\eneq
where $\sigma=\sigma_1\circ \sigma'.$
Note that
\beq\label{LT-22}
{\tilde {\mathfrak{K}}}(\sigma)={\tilde {\mathfrak{K}}}(\sigma_1)\times {\tilde {\mathfrak{K}}}(\sigma')=\zeta.
\eneq
In the case that $K_1(A)=H_1(K_o(A), K_1(A)),$ by \ref{MT1},
there exists a sequence of unitaries $\{u_n\}\subset U_0(A)$ and
a sequence of strongly asymptotically inner automorphisms
$\sigma_n''\in \Aut(A)$ such that
\beq\label{LT-23}
\af={\rm Ad}\, u_n\circ \sigma_n''\circ \bt_1\circ (\sigma_n'')^{-1}
\andeqn \lim_{n\to\infty}\|u_n-1\|=0.
\eneq
Put $\sigma_n=\sigma_n''\circ \sigma'.$ Theorem follows.

\end{proof}

\bibliographystyle{plain}

\end{document}